\newtheorem{theorem}{Theorem}[section]
\newtheorem{proposition}[theorem]{Proposition}
\newtheorem{lemma}[theorem]{Lemma}
\newtheorem{corollary}[theorem]{Corollary}
\newtheorem{remark}[theorem]{Remark}
\newtheorem{definition}[theorem]{Definition}
\newtheorem{notation}[theorem]{Notation}
\newcommand{\BP}{\mathbf{BP}}
\newcommand{\fpe}{\mathcal{F}^{\rm pe}}
\newcommand{\F}{\mathbb{F}}
\newcommand{\N}{\mathbb{N}}
\newcommand{\Z}{\mathbb{Z}}
\newcommand{\R}{\mathbb{R}}
\newcommand{\C}{\mathbb{C}}
\newcommand{\e}{{\rm e}}
\newcommand{\ord}{{\rm ord}}
\newcommand{\vol}{{\rm Vol}}
\newcommand{\pos}{{\rm pos}}
\newcommand{\Acal}{\mathcal{A}}
\newcommand{\Bcal}{\mathcal{B}}
\newcommand{\Ccal}{\mathcal{C}}
\newcommand{\Dcal}{\mathcal{D}}
\newcommand{\Gcal}{\mathcal{G}}
\newcommand{\Lcal}{\mathcal{L}}
\newcommand{\Ncal}{\mathcal{N}}
\newcommand{\Pcal}{\mathcal{P}}
\newcommand{\Qcal}{\mathcal{Q}}
\newcommand{\Fcal}{\mathcal{F}}
\renewcommand{\Pcal}{\mathcal{P}}
\newcommand{\Ucal}{\mathcal{U}}
\newcommand{\Vcal}{\mathcal{V}}
\newcommand{\Zcal}{\mathcal{Z}}
\newcommand{\Cfrak}{\mathfrak{C}}
\newcommand{\Dfrak}{\mathfrak{D}}
\newcommand{\Mfrak}{\mathfrak{M}}
\newcommand{\Nfrak}{\mathfrak{N}}
\newcommand{\pfrak}{\mathfrak{p}}
\newcommand{\Gfrak}{\mathfrak{G}}
\newcommand{\Qfrak}{\mathfrak{Q}}
\newcommand{\Ufrak}{\mathfrak{U}}
\newcommand{\Vfrak}{\mathfrak{V}}
\newcommand{\Wfrak}{\mathfrak{W}}
\newcommand{\Zfrak}{\mathfrak{Z}}
\title{Uniform Definability and Undecidability in Classes of Structures}
\author{Hector Pasten\\
Universidad de Concepci\'on\\
and\\
Thanases Pheidas\\
University of Crete\\
and\\
Xavier Vidaux\\
Universidad de Concepci\'on}
\date{}
\begin{document}

\date{}
\maketitle
\tableofcontents

\newpage

\begin{abstract} We present a concept of \emph{uniform encodability of theories} and develop tools related to this concept. As an application we obtain general undecidability results which are uniform for large families of structures. 
In the way, we define uniformly in the characteristic the equivalence relation ``$x\sim y$ if and only if $x$ is an iterate of $y$ throuh the Frobenius map, or vice versa'' in large classes of function fields and in polynomial rings. 
\end{abstract}

\section{Introduction}

It is well known that a system of Diophantine equations has a complex solution if and only if it has a solution modulo infinitely many primes (see \cite{Navarro}). Since there is an algorithm to solve the former problem, there is also an algorithm to decide whether an arbitrary system of diophantine equations has a solution in the finite field $\F_p$ for infinitely many primes $p$. In this work we show that the situation is completely different if we replace the fields $\F_p$ by rings of functions of positive characteristic and consider analogous diophantine problems. For example, we show that the following problems are undecidable: decide whether or not a system of diophantine equations together with conditions of the form ``$x$ is non constant'', for some of the unknowns $x$, has a solution in $\F_p[z]$ for
\begin{enumerate}
\item some odd prime $p$,
\item all odd primes $p$,
\item infinitely many odd primes $p$,
\item all but possibly a finite number of odd primes $p$,
\item all primes $p$ of the form $6k+5$, etc.
\end{enumerate}

Indeed we prove such very general \emph{uniform} undecidability results for large classes of subrings of function fields of curves (of large enough characteristic) - for example, for the class of all polynomial rings of odd positive characteristic (see Corollary \ref{Teoremon3}). In this work we will focus on existential theories, but indeed many of the tools that we develop here can be used more generally for theories of formulas of a given hierarchy. 

There seems to be rather few results of this kind in the bibliography, but there are several results on \emph{asymptotic (un)decidability}: given a class of structures, to decide whether or not a given formula is true for all but finitely many of them. For example, in \cite{ChatzHrush}, Chatzidakis and Hrushovski prove that a certain class of differential fields, each of them separately having a decidable theory, has an asymptotic undecidable theory. On the other hand, Hrushovski \cite{Hrushovski} and Macintyre \cite{Macintyre} (independently) show that the class of algebraically closed fields in positive characteristic, together with the Frobenius map, is asymptotically decidable. Other results of the same flavour can be found in \cite{AxKochen12,AxKochen3,Ax,Rumely}.

In this work, we will be interested in positive existential theories, because of the obvious connection with Hilbert's tenth problem, but the general method that we develop is straightforward adaptable to decidability questions about full theories. 

On the way, we define positive existentially the relation ``$y$ is a $p^s$-th power of $x$'' in a class of algebraic function fields whose fields of constants are algebraic over $\F_p$, for $p$ large enough with respect to the genus.

Hilbert's tenth problem (the tenth in the famous list that Hilbert gave at the International Conference of Mathematicians in Sorbonne, in 1900) was: \\

\noindent\emph{to find a process according to which one can determine in a finite number of steps whether a polynomial equation with integer coefficients has or does not have integer solutions.}\\

The problem was answered in 1971 when Y. Matiyasevich, based on work of J. Robinson, M. Davis and H. Putnam, proved that no such `process' (in modern terminology: algorithm) exists - and all this was built on the foundational work of K. G\"odel and A. Turing who laid the necessary foundations in Logic (see \cite{Davis} and \cite{Matiyasevich}). Later various authors asked similar questions for rings other than the integers (starting with J. Denef and L. Lipshitz). One such question is the following: What if we replace the \emph{integers} by \emph{polynomials}, say in one variable, with coefficients in a finite field $\F_q$, with $q=p^n$ elements, where $p$ is the (prime) characteristic. The problem was answered by J. Denef in \cite{Denef1} and \cite{Denef2}, negatively again. In the modern terminology of Logic the result is phrased \emph{The positive existential theory of a ring $F[z]$ of polynomials of the variable $z$ over a field $F$, in the language $\Lcal_z=\{0,1,+,\cdot,z\}$, is undecidable}. In this problem the considered polynomial equations are those with coefficients in the natural image of $\Z[z]$ in $F[z]$. 

Later, a large number of similar results (mostly of a negative nature) were established. The general flavor of these results is: if in place of $\Z$ in Hilbert's tenth problem we substitute a global ring or field, such as a ring of polynomials or rational functions (or a finite extension), all the existing results are negative (the positive existential theory is undecidable); almost always in the language $\Lcal_z$ or an extension of it by a finite list of symbols for certain elements of the structure. In contrast, in local domains, such as a field of $p$-adic numbers or power series, the results tend to be positive (decidable existential theory, even decidable first order theory). But there are many open problems, for example the question asked for $\C(z)$, the field of rational functions with complex coefficients (or coefficients in any algebraically closed field) and the field of formal power series over any reasonable field of positive characteristic (e.g. over a finite field). 

In order to state our results, we need to introduce a few notation. All languages considered will be first order languages. Also, the word \emph{class} will always refer to a non-empty class of structures over a common language. 

\begin{notation}\label{cafe}
\begin{enumerate}
\item We consider $0$ to be a natural number. 
\item All languages considered will be first order and equalitarian.
\item\label{form} If $\Lcal$ is a language, we will denote by $\Fcal_\Lcal$ (respectively $\Fcal_\Lcal^{\rm e}$, $\Fcal_\Lcal^{\rm pe}$) the set of (respectively existential, positive existential) $\Lcal$-sentences, and if $\Mfrak$ is an $\Lcal$-structure $T_\Lcal(\Mfrak)$ (respectively $T_\Lcal^{\rm e}$, $T_\Lcal^{\rm pe}$) will stand for the (respectively existential, positive existential) $\Lcal$-theory of $\Mfrak$.
\item\label{forget} If $\Ufrak$ is an $\Lcal$-structure and $X$ is a subset of $\Lcal$, we will denote by $\Ufrak_X$ the $\Lcal\smallsetminus X$-structure in which we \emph{forget} the interpretation of the symbols of $X$. If $\Ucal$ is a class of such $\Lcal$-structures, we will denote by $\Ucal_X$ the class of corresponding $\Lcal\smallsetminus X$-structures. 
\item\label{expand} If $\Ufrak$ is an $\Lcal$-structure and $X$ is a set of symbols which are not in $\Lcal$ and which have a given interpretation in $\Ufrak$, we will denote by $\Ufrak^X$ the corresponding $\Lcal\cup X$-structure. If $\Ucal$ is a class of such $\Lcal$-structures we will denote by $\Ucal^X$ the corresponding class of $\Lcal\cup X$-structures. 
\item All classes of structures are by default non-empty.
\item\label{languages} We define the following languages:
\begin{enumerate}
\item $\Lcal_A=\{0,1,+,\cdot\}$ is the language of rings;
\item $\Lcal_z=\Lcal_A\cup\{z\}$, where $z$ is a symbol of constant;
\item $\Lcal_{z,\ord}=\Lcal_z\cup\{\ord\}$, where $\ord$ is a unary predicate symbol;
\item $\Lcal_T=\Lcal_A\cup\{T\}$, where $T$ is a unary predicate symbol;
\item $\Lcal_T^*=\{0,1,+,\mid,R,T\}$, where $\mid$ and $R$ are binary relation symbols;
\item $\Lcal^{*,+}=\{0,1,+,R\}$.
\item $\Lcal^*=\{0,1,+,\pos,R\}$, where $\pos$ is a unary relation symbol interpreted in $\Z$ as:  ``$\pos(x)$ if and only if $x$ is non-negative''. We will freely write $x\geq0$ when working over this language. 
\end{enumerate}
\item\label{midp} For each prime $p$, consider the following equivalence relation $\mid_p$ over $\Z$:
$$
x\mid_p y \textrm{ if and only if there exists } s\in\Z \textrm{ such that } y=\pm xp^s.
$$
We will refer to it as \emph{$p$-divisibility} and denote its restriction to the natural numbers by the same symbol. 
\item\label{Dcal} Let $\Dfrak_p$ be the $\Lcal_T^*$-structure $(\Z;0,1,+,\mid,\mid_p,\Z\smallsetminus\{-1,0,1\})$ and 
$$
\Dcal=\{\Dfrak_p\colon p\textrm{ is prime}\}.
$$
\item\label{Ncal} Let $\Nfrak_p$ be the $\Lcal^{*,+}$-structure $(\N;0,1,+,\mid_p)$ and 
$$
\Ncal=\{\Nfrak_p\colon p\textrm{ is prime}\}.
$$
\item\label{Zcal} Let $\Zfrak_p$ be the $\Lcal^*$-structure $(\Z;0,1,+,\geq0,\mid_p)$ and 
$$
\Zcal=\{\Zfrak_p\colon p\textrm{ is prime}\}.
$$
\item\label{LzFF} All function fields will be considered as structures over $\Lcal_{z,\ord}$, where $z$ is interpreted as a local parameter at a prime divisor $\pfrak$ of the field over its field of constants, and $\ord(x)$ will be interpreted as ``the valuation of $x$ at $\pfrak$ is non-negative''. The symbol $z$ will just be the variable $z$ in the case of a rational function field $F(z)$, and in this case $\ord(x)$ will be interpreted as ``the order of $x$ at $0$ is non-negative''.
\item\label{LzRat} Any subring $B$ of a rational function field $F(z)$, whose elements are regular at $0$, will be considered as an $\Lcal_z$-structure, where the symbol $z$ is interpreted as the variable $z$, or, in the case that $B$ is a ring of polynomials $F[z]$, it will also be considered as an $\Lcal_T$-structure, where $T(x)$ will be interpreted as ``$x$ is non-constant''.
\end{enumerate}
\end{notation}

\begin{definition}
Let $\Lcal$ be a first order language, $X$ be a non-empty proper subset of $\Lcal$, and let $\Ucal$ be a class of $\Lcal$-structures. We will say that a symbol $\alpha\in X$ is \emph{uniformly $\Lcal\smallsetminus X$-definable in $\Ucal_X$} (or in $\Ucal$ if there is no ambiguity), if there exists an $\Lcal\smallsetminus X$-formula which defines the interpretation of $\alpha$ in each element of $\Ucal$. If moreover the formula is existential, respectively positive existential, then we will say \emph{uniformly existentially $\Lcal\smallsetminus X$-definable}, respectively \emph{uniformly positive existentially $\Lcal\smallsetminus X$-definable ($\Lcal\smallsetminus X$-uped)}, instead of just uniformly $\Lcal\smallsetminus X$-definable.
\end{definition}

If the symbol $\alpha$ has the same name `x' across its interpretations in elements of $\Ucal$, we will say that `x' is uniformly $\Lcal\smallsetminus\{\alpha\}$-definable. Also we may say that the family of interpretations of $\alpha$ is uniformly definable in $\Ucal$ instead of  saying that $\alpha$ is.

Let us give a few trivial examples to illustrate the definition:
\begin{itemize} 
\item With the language $\{R,\cdot\}$ and the class of all groups (where $R(x)$ is interpreted as ``$x$ is in the center'' and the symbol $\cdot$ is interpreted as the group law), 
the formula $\forall y(xy=yx)$ uniformly $\{\cdot\}$-defines $R$ in the class of all groups. 
\item With the language $\{\e,\cdot\}$ and the class of all groups (where $\e$ is interpreted as the identity element and $\cdot$ is interpreted as the group law), the formula $\exists y(x\cdot y=y)$ uniformly positive existentially $\{\cdot\}$-defines $\e$ in $\Ucal$ over the language $\{\cdot\}$. So we shall say that the identity element is $\{\cdot\}$-uped in the class of all groups. 
\end{itemize}

Another elementary example is given by the following lemma which we will prove in Section \ref{ProofCognac}.\\

\begin{lemma}\label{polne}
The relation $\ne$ is $\Lcal_z$-uped in the class of all polynomial rings over fields, where $z$ is interpreted as the variable. 
\end{lemma}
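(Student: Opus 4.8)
The plan is to reduce the statement to the positive existential definability — uniform in the field $F$ — of the set of nonzero elements of $F[z]$. Indeed, $x\ne y$ if and only if there is a $w$ with $w+y=x$ and $w\ne 0$; so if $\nu(w)$ is a positive existential $\Lcal_z$-formula that defines $F[z]\ssm\{0\}$ in every $F[z]$, then $\exists w\,(w+y=x\wedge\nu(w))$ defines $\ne$ uniformly across the class. Thus everything comes down to producing such a $\nu$.

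For $\nu$, the key structural fact is that $z$ is a prime element of $F[z]$ and $F[z]/(z)\cong F$ is a field; hence ``$h$ is prime to $z$'' is positive existential, being by Bézout equivalent to $\exists a,b\,(ah+bz=1)$. This already defines positive existentially the polynomials with nonzero constant term, that is, the $w$ that are nonzero and not divisible by $z$. A general nonzero polynomial has the shape $w=z^{k}w_{0}$ with $z\nmid w_{0}$, and conversely every polynomial of this shape is nonzero; so it remains to express, positive existentially, that such a factorization exists — equivalently, that the order of $w$ at $z$ is finite.

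This last step is the point where care is needed, and is what I expect to be the main obstacle: one cannot literally quantify over the exponent $k$, and it is not evident how to define the powers of $z$ by a positive existential formula (such a formula would finish the proof at once, via $\exists m,w_{0}\,(w=m w_{0}\wedge m\text{ a power of }z\wedge aw_{0}+bz=1)$). One line of attack I would pursue is the elementary equivalence $w\ne 0\iff\exists m\,(m\mid w\ \wedge\ zm\nmid w)$ — valid because every $zm$ divides $0$, whereas for $w\ne 0$ one may take $m=w$ — combined with a positive existential rewriting of the non‑divisibility: writing $w=mh$, the clause $zm\nmid w$ amounts to ``$m\ne 0$ and $z\nmid h$'', and ``$z\nmid h$'' is already positive existential by the Bézout remark above. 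The residual difficulty is to force the witness $m$ to be nonzero without circularity; I would try to do this by restricting $m$ to a suitably rich positive‑existentially‑definable family of manifestly nonzero polynomials (for instance polynomials with constant term $1$, i.e. of the form $1+zg$, suitably combined so as still to provide, for every nonzero $w$, a divisor $m$ with the correct order at $z$). Alternatively, one can appeal to the known diophantine definability of the set of nonzero elements in polynomial rings over (finite or infinite) fields, after verifying that the formula involved can be chosen uniformly in $F$, which then plugs directly into the reduction of the first paragraph.
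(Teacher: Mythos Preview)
Your reduction to defining the nonzero set is fine, and your Bézout observation that ``coprime to $z$'' is positive existential is exactly the right ingredient. But the proof is not complete: you correctly flag the circularity --- to say $w=mh$ with $z\nmid h$ captures nonzero $w$ only once you know $m\ne 0$ --- and neither of your proposed escapes actually closes the gap. Restricting $m$ to the family $1+zg$ forces $m$ to be a unit times something coprime to $z$, so $w/m$ still carries the full $z$-power of $w$; you cannot reach the required order at $z$ that way, and ``suitably combined'' is doing all the work without any indication of how. The final appeal to ``known diophantine definability'' is a citation, not an argument, and the uniformity in $F$ is precisely the point at issue.

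The missing idea is to bring in a \emph{second} prime element. The paper uses the single formula
\[
\varphi_{\ne}(t)\colon\ \exists x,u,v\ \bigl((zu-1)((z+1)v-1)=tx\bigr).
\]
If $t=0$ this forces one factor to vanish, impossible since neither $z$ nor $z+1$ is a unit. If $t\ne 0$, write $t=t_0t_1$ with $z\nmid t_0$ and $(z+1)\nmid t_1$ (possible since $z$ and $z+1$ are coprime); Bézout then gives $zu-1=-t_0x_u$ and $(z+1)v-1=-t_1x_v$, and the product is $t\cdot x_ux_v$. The two-prime trick is what dissolves your circularity: each factor $zu-1$ and $(z+1)v-1$ is automatically nonzero for structural reasons, with no appeal to a prior $\ne 0$ predicate. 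Equivalently, in the language of your attempt, one could say $w\ne 0$ iff $w=h_1h_2$ with $h_1$ coprime to $z$ and $h_2$ coprime to $z+1$; both coprimality conditions are positive existential by Bézout and each forces its $h_i$ to be nonzero, breaking the loop.
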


Moret-Bailly in \cite{MoretBailly} gives very general criteria for positive existential (un)definability of the relation $\ne$ in rings. 

As a non-trivial example, we prove the following proposition in Section \ref{ProofCognac}.

\begin{proposition}\label{Cognac}
Consider the language 
$$
\Lcal=\{0,+,\leq,R_2\}
$$ 
and the structures 
$$
\Cfrak_r=(\Z;0,+,\leq,P_2^r)
$$ 
where $P_2^r(x)$ stands for ``$x$ is a square and $r$ does not divide $x$''. The relation $\leq$ is $\{0,+,R_2\}$-uped in the class  $\Ucal$ of all structures $\Cfrak_r$ with $r\geq2$. 
\end{proposition}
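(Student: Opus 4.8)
The plan is to recast the problem as a uniform statement about sums of squares, and then feed Lagrange's theorem plus a counting input into it. The starting observation is that in every $\Cfrak_r$ the set $P_2^r$ (the interpretation of $R_2$) consists only of \emph{positive} perfect squares, so every finite sum of its elements is a non-negative integer. Hence, if there is an absolute constant $K$ such that for each $r\ge 2$ every non-negative integer is a sum of at most $K$ elements of $P_2^r$, then the positive existential $\{0,+,R_2\}$-formula
\[
\varphi(x,y):\qquad \exists a_1\cdots\exists a_K\ \Bigl(\ \bigwedge_{i=1}^{K}\bigl(a_i=0\ \lor\ R_2(a_i)\bigr)\ \land\ y=x+a_1+\cdots+a_K\ \Bigr)
\]
defines $\leq$ in every $\Cfrak_r$ with $r\ge 2$: the direction $\varphi(x,y)\Rightarrow x\leq y$ is immediate, the converse is precisely the asserted representability of $y-x\geq 0$, and uniformity is automatic since $\varphi$ does not mention $r$. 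So everything comes down to:

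\smallskip
\noindent\textbf{Claim.} \emph{There is an absolute constant $K$ such that for every integer $r\ge 2$ and every $n\in\N$, $n$ is a sum of at most $K$ integers of the form $m^2$ with $r\nmid m^2$.}

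\smallskip
I would first replace $r$ by the invariant that actually matters. Writing $r=\prod p^{e_p}$ and $c=c(r):=\prod p^{\lceil e_p/2\rceil}$, a prime-by-prime check gives $r\mid m^2\iff c\mid m$; thus $P_2^r=S_c:=\{m^2:m\in\Z,\ c\nmid m\}$, which is the set of all perfect squares with the numbers $(ck)^2$ ($k\geq 1$) deleted, and $c\geq 2$ because $r\geq 2$. Now split the Claim according to the size of $n$. If $n<c^2$, apply Lagrange's four-square theorem to get $n=x_1^2+x_2^2+x_3^2+x_4^2$; each $x_i^2\leq n<c^2$, so if some $x_i^2\notin S_c\cup\{0\}$ then $x_i^2=(ck)^2\geq c^2$, a contradiction; discarding the zero terms exhibits $n$ as a sum of at most $4$ elements of $S_c$. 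This is the easy half, and it already shows why the divisibility clause is harmless for small inputs.

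The case $n\geq c^2$ is the heart of the matter, since here one must genuinely produce \emph{large} elements of $S_c$. I would treat an intermediate range $c^2\leq n<c^{C_0}$ (for the absolute exponent $C_0$ below) by repeatedly peeling off a large good square: choosing $j$ among $\lfloor\sqrt n\rfloor$ and $\lfloor\sqrt n\rfloor-1$ with $c\nmid j$ (possible, as consecutive integers cannot both be multiples of $c$) gives $j^2\in S_c$ and $0\leq n-j^2<4\sqrt n$; iterating a number of times that is bounded because $C_0$ is absolute brings the remainder below $c^2$, handled by the previous paragraph, at total cost $O(1)$ elements of $S_c$. For $n\geq c^{C_0}$ I would instead pass to $2n$ and use the identity $2(u^2+v^2)=(u+v)^2+(u-v)^2$: from a four-square representation of $n$ (or of $n/2$, adjusting by the single square $1\in S_c$ in the odd case) one obtains a representation of the relevant even number as $(a+b)^2+(a-b)^2+(d+e)^2+(d-e)^2$, whose bases lie outside $c\Z$ exactly when the residues modulo $c$ avoid the patterns $a\equiv\pm b$ and $d\equiv\pm e$. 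The crucial input is that for $n$ large compared to $c$ such a representation exists — that is, the equidistribution of representations by $x_1^2+x_2^2+x_3^2+x_4^2$ among residue classes modulo $c$, uniform in $c$ for $n\gg c^{C_0}$, which comes from the essentially exact formulas for quaternary representation numbers (the cusp contributions being bounded via Deligne), together with local solvability of the required congruences. The finitely many small $c$, and the finitely many residue classes where the congruences above are obstructed (e.g.\ at the prime $2$), are dealt with separately using the classical fact that the set of squares coprime to a fixed modulus is an additive basis of finite order. Collecting the bounds from the three ranges $n<c^2$, $c^2\leq n<c^{C_0}$, $n\geq c^{C_0}$ yields the absolute $K$ of the Claim, hence the Proposition; the one genuinely hard step, and the place where the real obstacle lies, is the uniform-in-$c$ lower bound for four-square representations in a prescribed residue class.
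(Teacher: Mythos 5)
Your reduction of the proposition to an arithmetic claim is exactly the paper's: the positive existential formula $\exists a_1\cdots\exists a_K\bigl(\bigwedge_i(a_i=0\vee R_2(a_i))\wedge y=x+\sum_i a_i\bigr)$ is the formula used in the paper (with $K=5940$), and your observation that $P_2^r=\{m^2\colon c(r)\nmid m\}$ with $c(r)=\prod p^{\lceil e_p/2\rceil}\ge 2$ is correct and in fact cleans up a small imprecision in how the paper applies its own counting theorem (which is stated for squares of non-multiples of $u$, not for squares that are themselves non-multiples of $r$). Your treatment of the ranges $n<c^2$ (Lagrange, all squares automatically admissible) and $c^2\le n<c^{C_0}$ (greedy peeling of one of two consecutive large bases) is sound, modulo the unstated but fixable point that for small $c$ the doubly-logarithmic decay $4\sqrt{m}$ stalls in an absolutely bounded window and one must fall back on a crude step count there.

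The genuine gap is the range $n\ge c^{C_0}$, which you yourself identify as ``the one genuinely hard step'': you do not prove the uniform-in-$c$ existence of four-square representations with prescribed residues modulo $c$ (needed to make the bases $a\pm b$, $d\pm e$ avoid $c\Z$), you only assert that it follows from exact/approximate formulas for quaternary representation numbers with Deligne bounds on the cusp contribution plus local solvability. That is a theorem-sized input, not a routine verification: one must control the local densities of the congruence conditions uniformly in $c$ (they degrade like a power of $c$, which is why a threshold $c^{C_0}$ is needed at all), and the parity obstructions you mention (e.g.\ for even $c$, all four bases odd forces $N\equiv 2\bmod 4$) are not obviously confined to ``finitely many residue classes'' fixable by a fixed-modulus basis result, since the latter is not uniform in $c$. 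The paper avoids all of this with an elementary argument: a lattice-point comparison (Lemmas \ref{cubo} and \ref{cubitos}) shows that deleting the multiples of $u$ from each coordinate costs at most a factor $\bigl(\frac{u-1}{u}\bigr)^6\ge 2^{-6}$ in the number of representations by six squares, which forces the Shnirel'man density of $6S$ to be at least $0.0014$ uniformly in $u$; Shnirel'man's theorems (Theorem \ref{dens1} and Lemma \ref{dens2}) then give that $5940\,S=\N$. If you want to complete your route, you would need to actually prove the equidistribution statement with explicit uniformity in $c$; otherwise the claim, and hence the proposition, is not established for $n$ large relative to $c$.
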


Let us give an example where we do not have uniformity. Consider the language $\Lcal=\Lcal_A\cup\{\alpha\}$, where $\alpha$ is a symbol of constants. Consider the $\Lcal$-structures $\Mfrak^k=(\Z;0,1,+,\cdot,k)$, where $\alpha$ is interpreted as $k$ in each $\Mfrak^k$. The formula $x=k$ defines $k$ over $\Lcal_A$ in each $\Mfrak^k$, but there is no formula that uniformly $\Lcal_A$-defines $\alpha$ in the set $\{\Mfrak^k\colon k\in\Z\}$: such a formula $\varphi(x)$ would $\Lcal_A$-define $2$ in $\Mfrak^2_{\{\alpha\}}$ and $3$ in $\Mfrak^3_{\{\alpha\}}$, which is absurd as these two structures are the same (the ring of integers). Note that with this example, it is enough to consider two distinct structures. Next proposition shows that one can have uniformity in each finite subfamily of a family of structures but not in the whole family. The proof will be given in Section \ref{ProofCognac}.

\begin{proposition}\label{TimphuButhan}
Let $\Ccal$ be the set of all finite fields $\F_p$ of prime characteristic $p$. 
The relation ``to be a square'' is $\{0,1,+\}$-uped in any finite subfamily of $\Ccal$, but there is no infinite subfamily of $\Ccal$ where it is $\{0,1,+\}$-uped. Hence, in particular, multiplication is not $\{0,1,+\}$-uped in $\Ccal$.
\end{proposition}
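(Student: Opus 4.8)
The plan is to establish the three assertions in turn, the middle one --- nonexistence of an infinite ``square-defining'' subfamily --- carrying all the real content. For a \emph{finite} subfamily, say $\{\F_{p_1},\dots,\F_{p_n}\}$ with the $p_i$ distinct primes, I would write down an explicit formula. Put $M=p_1\cdots p_n$ and let $S_i\subseteq\{0,\dots,p_i-1\}$ be the set of residues of squares of $\F_{p_i}$; each $S_i$ is nonempty, as $0,1\in S_i$. Using the Chinese Remainder isomorphism $\Z/M\Z\cong\prod_i\Z/p_i\Z$, take $C\subseteq\{0,\dots,M-1\}$ to be the preimage of $\prod_iS_i$, so that reduction modulo $p_i$ maps $C$ onto $S_i$ for every $i$. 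Then the quantifier-free positive --- hence positive existential --- $\{0,1,+\}$-formula
\[
\varphi(x):=\bigvee_{c\in C}\Bigl(x=\underbrace{1+\cdots+1}_{c}\Bigr)
\]
defines ``to be a square'' in every $\F_{p_i}$: there the $c$-th numeral equals $c\bmod p_i$, so $\F_{p_i}\models\varphi(a)$ iff $a\equiv c\pmod{p_i}$ for some $c\in C$, iff $a\bmod p_i\in S_i$, iff $a$ is a square. Since $\varphi$ depends only on the chosen family and not on its members, this gives the first assertion. (Disjunctions are genuinely needed here: for $p\ge3$ the set $S_p$ is not defined by any existential \emph{conjunction} of $\{0,1,+\}$-equations, since over $\F_p$ such a formula defines the projection of an affine subspace, hence $\emptyset$, a point, or all of $\F_p$.)

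For the second assertion the crux is a structural dichotomy: a \emph{fixed} positive existential $\{0,1,+\}$-formula $\varphi(x)$ defines, in each $\F_p$, a set that is either all of $\F_p$ or of cardinality at most some constant $D=D(\varphi)$ independent of $p$. Indeed, every $\{0,1,+\}$-term is an $\N$-affine-linear expression in its variables --- there being no multiplication available to raise the degree --- so modulo $p$ each atomic subformula defines an affine subspace of the ambient $\F_p^{k}$. Putting the quantifier-free positive part of $\varphi$ into disjunctive normal form (this is the step that produces the bound $D$, uniformly in $p$), one rewrites $\varphi(x)$ as a disjunction of $D$ formulas $\exists\bar y\,(\text{conjunction of atomics})$; over $\F_p$ each such formula defines the projection to one coordinate of an affine subspace of $\F_p^{1+k}$, hence $\emptyset$, a single point, or all of $\F_p$, and the union is therefore $\F_p$ or has at most $D$ elements. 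Granting this, suppose ``to be a square'' were $\{0,1,+\}$-uped in some infinite subfamily by a formula $\varphi$. Then $\varphi(\F_p)$ equals the square set $S_p$ for infinitely many $p$; but $|S_p|=(p+1)/2\to\infty$ while $S_p\ne\F_p$ for $p\ge3$, so for all large such $p$ neither option of the dichotomy can hold --- a contradiction.

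The final assertion is then immediate: if multiplication were $\{0,1,+\}$-uped in $\Ccal$ by a positive existential $\mu(x,y,z)$, then $\exists w\,\mu(w,w,x)$ would positive-existentially $\{0,1,+\}$-define ``to be a square'' uniformly over all of $\Ccal$, hence over every infinite subfamily --- which the second assertion forbids. The only real obstacle I anticipate is the structural dichotomy of the second part, and within it the point of passing to disjunctive normal form so that the number of ``affine pieces'' of $\varphi(\F_p)$ stays bounded independently of $p$; everything else is routine bookkeeping with the Chinese Remainder Theorem and with the $\N$-affine-linear shape of $\{0,1,+\}$-terms.
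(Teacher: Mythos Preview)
Your proof is correct and the core argument --- the second assertion --- is essentially identical to the paper's: both reduce a positive existential $\{0,1,+\}$-formula to a disjunction of projections of affine subspaces, observe that each such projection is $\emptyset$, a point, or all of $\F_p$, and derive the contradiction from $|S_p|=(p+1)/2$.

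The one noteworthy difference is in the first (easy) part. Your CRT construction works, but is more elaborate than needed: the paper simply takes $q=\max\{p_1,\dots,p_n\}$ and uses
\[
\bigvee_{i=0}^{q-1}\,x=i^2,
\]
which already defines the squares in every $\F_{p_j}$ since $\{i^2\bmod p_j : 0\le i<q\}=\{i^2\bmod p_j : 0\le i<p_j\}$ whenever $q\ge p_j$. No CRT is required. Your explicit derivation of the third assertion via $\exists w\,\mu(w,w,x)$ is a welcome addition; the paper leaves this step implicit.
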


A highly relevant result can be found in \cite{ChatzDriesMacintyre}, where it is shown that there is no formula in the language of rings that defines $\F_q$ in $\F_{q^2}$ for all but finitely many $q$ (here $q$ is any power of any prime).

We will now present one of the main tools that will allow us to obtain several uniform definitions. We first define a relation that has often been a key point to codify the integers in rings of functions of positive characteristic (see for example, by chronological order, \cite{Denef2}, \cite{PheidasPow}, \cite{PheidasInv}, \cite{KimRoush1}, \cite{Videla},\cite{Shlapentokh1}, \cite{PheidasZahidi1}, \cite{Shlapentokh2}, \cite{Eisentrager} and \cite{EisenShlapentokh}).

\begin{definition}\label{DefRA}
Let $R_A$ be the equivalence relation defined on $A$ by: 
$$
R_A(x,y)\textrm{ if and only if there exists } s\in\N \textrm{ such that either } 
y=x^{p^s} \textrm{ or } x=y^{p^s},
$$
where $p>0$ is the characteristic of $A$. For short we will say that ``there exists $s\in\Z$ such that $y=x^{p^s}$. 
\end{definition}

In order to show that the above relation is uped in several classes of structures, we need to introduce B\"uchi's problem. 

Let $A$ be a commutative ring with unit and of positive characteristic $p>2$. Let $C$ be a subring of $A$. If $M\geq3$, let us call an \emph{$M$-term B\"uchi sequence for $(A,C)$} a sequence of $M$ elements of $A$, not all in $C$, whose second difference of squares is the constant sequence $(2)$.\\  

\noindent \textbf{B\"uchi's Problem for Rings of Characteristic $p>2$:}\\
\noindent $\BP(A,C,M)$ \emph{Is it true that for all $N\geq M$, any $N$-term B\"uchi sequence $(x_n)$ of $(A,C)$ satisfies 
$$
x_n^2=(x+n)^{p^s+1}, \qquad n=1,\dots,N, 
$$
for some $x\in A$ and some non-negative integer $s$?}\\

\begin{notation}
If $\BP(A,C,M)$ has a positive answer for some $M$ then we will denote by $M_0(A,C)$ 
the least such $M$.
\end{notation}

Note that $M_0(A,C)$, if it exists, is always at most the characteristic $p$ of $A$ (as if there exists an $M$ greater than $p$ then the B\"uchi sequence is $p$-periodic; see \cite{PastenPheidasVidaux1}). 

We prove:

\begin{theorem}\label{TeoremonA}
If $\BP(A,C,M)$ has a positive answer then there exists a positive existential $\Lcal_A$-formula $\varphi_{M_0(A,C)}(x,y)$ with the following properties: 
\begin{enumerate}
\item\label{Aimp} If $R_A(x,y)$ holds then $A$ satisfies $\varphi_{M_0(A,C)}(x,y)$; and
\item\label{Aeq} if either $xy$ or $x+y$ is not in $C$ then: $R_A(x,y)$ holds if and only if $A$ satisfies $\varphi_{M_0(A,C)}(x,y)$. 
\end{enumerate}
\end{theorem}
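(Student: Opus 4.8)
The plan is to reduce the relation $R_A(x,y)$ to a statement about B\"uchi sequences, and then use the positive answer to B\"uchi's problem together with the shape of the conclusion $x_n^2=(x+n)^{p^s+1}$ to extract a positive existential formula. The starting observation is that for fixed $x,y\in A$ and a parameter $t$ ranging over $A$, the equation $x_n^2=(t+n)^{p^s+1}$ combined with the ``second difference of squares equals $2$'' condition is essentially a reparametrisation of the condition that $t+n$ itself runs through iterates. Concretely, if $R_A(x,y)$ holds, say $y=x^{p^s}$, one wants to exhibit an $M_0$-term sequence whose $n$-th square is $(x+n)^{p^s+1}$ where the first and, say, last terms encode $x$ and $y$; conversely one wants to argue that the existence of such a sequence (together with a non-constancy witness provided by the hypothesis that $xy$ or $x+y$ is not in $C$) forces $y$ to be a $p^s$-th power of $x$ or vice versa.

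First I would write down the defining formula explicitly. Let $M=M_0(A,C)$. The formula $\varphi_M(x,y)$ should assert: there exist $u_1,\dots,u_M\in A$ such that the second difference of $(u_n^2)$ is constantly $2$, such that at least one of $u_1,\dots,u_M$ lies outside the image of the relevant diophantine description of $C$ — but since $C$ is an arbitrary subring we cannot refer to it in $\Lcal_A$, so instead the ``not all in $C$'' clause must be replaced by an algebraic surrogate, for instance ``$x\ne y$'' or a non-degeneracy condition on the $u_n$ that is automatic when $xy$ or $x+y\notin C$. This is the delicate point: the formula must not mention $C$, yet the equivalence in part (\ref{Aeq}) is only claimed under a hypothesis involving $C$. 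So $\varphi_M$ will be a pure $\Lcal_A$-formula, and the role of the $C$-hypothesis is only to guarantee, in the proof of (\ref{Aeq}), that any witnessing sequence is genuinely non-constant and hence falls under the scope of $\BP(A,C,M)$.

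Then the two implications. For (\ref{Aimp}): assuming $y=x^{p^s}$ (the case $x=y^{p^s}$ being symmetric), I would exhibit the sequence $u_n$ with $u_n^2=(x+n)^{p^s+1}$; such $u_n$ exist in $A$ because $(x+n)^{p^s+1}=\big((x+n)\cdot(x+n)^{p^s/?}\big)$ — more precisely, in characteristic $p$ one has $(x+n)^{p^s+1}=(x+n)^{p^s}(x+n)=(x^{p^s}+n)(x+n)$, and when $s$ is such that $y=x^{p^s}$ this is $(y+n)(x+n)$; but to be a square we actually want $u_n=(x+n)^{(p^s+1)/2}$, which makes sense whenever $p^s+1$ is even, i.e. always since $p$ is odd. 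One checks that the second difference of $(u_n^2)=((x+n)^{p^s+1})$ is the constant $2$ exactly when one interprets squares correctly — this is the standard Vojta/B\"uchi identity — and verifies that $\varphi_M(x,y)$ holds by also encoding $x$ and $y$ from the endpoints of the sequence via the relations $u_1^2=(x+1)^{p^s+1}$ and a twin constraint recovering $y$. For (\ref{Aeq}) in the nontrivial direction: given that $A\models\varphi_M(x,y)$ and that $xy\notin C$ or $x+y\notin C$, the witnessing sequence $(u_n)_{n=1}^M$ is not all in $C$ (this is where the hypothesis is used), so by $\BP(A,C,M)$ — applied with $N=M$ — there are $t\in A$ and $s\ge 0$ with $u_n^2=(t+n)^{p^s+1}$ for all $n$; comparing with the endpoint encodings of $x$ and $y$ in $\varphi_M$ forces $t=x$ (or $t=y$) and then $y=x^{p^s}$ (or $x=y^{p^s}$).

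The main obstacle I expect is precisely the bookkeeping in the previous paragraph: designing $\varphi_M$ so that (a) it is literally positive existential over $\Lcal_A$ with no reference to $C$, (b) the endpoint encodings pin down $x$ and $y$ uniquely from a B\"uchi sequence without introducing spurious solutions, and (c) the non-constancy of the witnessing sequence is forced by ``$xy\notin C$ or $x+y\notin C$'' rather than being an extra hypothesis on the $u_n$ themselves. In particular one has to be careful that in part (\ref{Aimp}) no such $C$-hypothesis is available, so $\varphi_M$ cannot demand non-constancy of the sequence; it must be satisfiable by constant sequences too (this is consistent with (\ref{Aimp}) being only an implication, not an equivalence). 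Managing this asymmetry — a single formula that is ``loose'' enough to be implied by $R_A$ unconditionally and ``tight'' enough to be equivalent to $R_A$ under the $C$-hypothesis — is the crux, and everything else is the classical algebra of second differences of $(p^s+1)$-st powers in characteristic $p$, for which I would cite \cite{PastenPheidasVidaux1}.
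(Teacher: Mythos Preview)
Your overall strategy matches the paper's: exhibit a B\"uchi sequence built from $(x+n)^{p^s+1}$, link $x$ and $y$ to its first terms, and use the positive answer to $\BP(A,C,M)$ for the converse. You also correctly isolate the three design constraints (a)--(c) on $\varphi_M$. What is missing is the concrete device that resolves (b) and (c) simultaneously, and without it the argument is not yet a proof.

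The gap is your ``endpoint encoding''. You propose clauses like $u_1^2=(x+1)^{p^s+1}$, but $s$ is not a variable in an $\Lcal_A$-formula, so this cannot appear literally; and you leave the ``twin constraint recovering $y$'' unspecified. The paper's key move is to encode not $x$ and $y$ separately but their \emph{elementary symmetric functions}: with the shifted indexing $x_n^2=(x-1+n)^{p^s+1}$ one has the identities
\[
xy=x_1^2\qquad\text{and}\qquad x+y=x_2^2-x_1^2-1,
\]
both of which are $\Lcal_A$-expressible with no reference to $s$. The formula $\varphi_{M_0}$ is then simply the existential closure of ``$(x_n^2)$ has second difference $2$'' conjoined with these two equalities (and symmetrised in $x,y$ by a disjunction). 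This single device handles all three of your worries at once: it is pure $\Lcal_A$; it pins down $\{x,y\}$ as the two roots of $t^2-(x+y)t+xy=0$, so from $x_n^2=(f+n)^{p^s+1}$ one gets $\{x,y\}=\{f+1,(f+1)^{p^s}\}$ with no spurious solutions; and it makes (c) automatic, since $xy\notin C$ forces $x_1^2\notin C$, while $x+y\notin C$ forces $x_1^2\notin C$ or $x_2^2\notin C$, so the witnessing sequence is a genuine B\"uchi sequence for $(A,C)$.
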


In the cases relevant to this work, B\"uchi's problem is known to have a positive answer when $(A,C,M)$ is
\begin{enumerate}
\item $(F[z],F,14)$ for any field $F$ of characteristic $p\geq17$;
\item $(F(z),F,18)$ for any field $F$ of characteristic $p\geq19$;
\item $(K,F,312g+169)$ for any function field of a curve $K$ of genus $g$, with field of constants $F$, and of characteristic $p\geq 312g+169$.
\end{enumerate}

For a reference, see \cite{PheidasVidaux2} and \cite{PheidasVidaux2bis} for Items 1 and 2, and \cite{ShlapentokhVidaux} for Item 3.

In order to uniformly define the relation $R_A$ in some classes of structures, we need to introduce the following definition. 

\begin{definition}\label{pomo}
Let us call \emph{B\"uchi class} any class $\Ccal$ of pairs of rings such that there exists an integer $M$ so that $\BP(A,C,M)$ has a positive answer for any $(A,C)$ in the class. If $\Ccal$ is a B\"uchi class, we denote by $M(\Ccal)$ the maximum of the set 
$$
\{M_0(A,C)\colon (A,C)\textrm{ in the class }\Ccal\}
$$
and by $\bar\Ccal$ the class of structures $A$ such that $(A,C)$ is in $\Ccal$ for some $C$ (so $\bar\Ccal$ is the projection on the first component). 
\end{definition}

Note that $M(\Ccal)$ may be greater than some of the characteristics of the $A$ in $\bar\Ccal$ but this can happen for at most a finite number of characteristics. 

\begin{theorem}\label{TeoremonB}
Let $\Ccal$ be a B\"uchi class such that $C$ is a field for each pair $(A,C)$ in the class. Suppose that for each pair $(A,C)$ in the class $\Ccal$, $A$ is both an $\Lcal_T$-structure and an $\Lcal_z$-structure, where $T(x)$ is interpreted as ``$x$ is transcendental over $C$'' and $z$ is a symbol of constant interpreted by an element of $A$ transcendental over $C$. There exist a positive existential $\Lcal_T$-formula $\varphi_\Ccal^T(x,y)$ and a positive existential $\Lcal_z$-formula $\varphi_\Ccal^z(x,y)$ 
with the following properties:
\begin{enumerate}
\item $\varphi_\Ccal^T(x,y)$ uniformly defines $R_A$ in $\bar\Ccal$ (hence the collection of relations $R_A$ is $\Lcal_T$-uped in $\bar\Ccal$); and\label{BT}
\item $\varphi_\Ccal^z(x,y)$ uniformly defines $R_A$ in $\bar\Ccal$ (hence the collection of relations $R_A$ is $\Lcal_z$-uped in $\bar\Ccal$).\label{Bz}
\end{enumerate} 
\end{theorem}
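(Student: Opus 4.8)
The plan is to derive Theorem \ref{TeoremonB} from Theorem \ref{TeoremonA} by a uniformization argument. Fix the B\"uchi class $\Ccal$ and set $M = M(\Ccal)$, the maximum of the finitely-many-exceptions-aside values $M_0(A,C)$. The first observation is that although $M_0(A,C)$ varies with the pair, the formula $\varphi_{M_0(A,C)}$ produced by Theorem \ref{TeoremonA} is built in a uniform way from the integer parameter $M_0(A,C)$: inspecting that construction, the formula $\varphi_m$ makes sense for \emph{any} fixed integer $m$, it is positive existential in $\Lcal_A$, and for any ring $A$ of characteristic $p>2$ and subring $C$ with $\mathrm{BP}(A,C,m)$ true one has the implication in part \ref{Aimp} and the equivalence in part \ref{Aeq}. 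Since $\mathrm{BP}(A,C,M)$ is true for all $(A,C)\in\Ccal$ (because $M\ge M_0(A,C)$ and $\mathrm{BP}$ being true for one threshold makes it true for all larger ones), the single formula $\varphi_M$ works simultaneously for every pair in $\Ccal$: $R_A(x,y)\Rightarrow A\models\varphi_M(x,y)$ always, and the converse holds whenever $xy\notin C$ or $x+y\notin C$.

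Next I would upgrade the one-sided formula $\varphi_M$ to a genuine definition of $R_A$ by using the extra symbols $T$ (resp.\ $z$) to handle the ``bad'' case where both $xy$ and $x+y$ lie in $C$. The idea is the standard trick: if $x$ and $y$ are such that $xy\in C$ and $x+y\in C$, then $x$ and $y$ are roots of a quadratic over $C$, so each is algebraic over $C$; conversely if $x$ is transcendental over $C$ then $x^{p^s}$ is too, so whenever $R_A(x,y)$ holds with $x$ transcendental, $xy=x^{p^s+1}$ (say) is transcendental, putting us in the good case. Thus the dichotomy is: either $x$ (equivalently $y$, on the relation $R_A$) is transcendental over $C$, in which case $\varphi_M$ already defines $R_A$ exactly, or $x$ is algebraic over $C$. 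In the $\Lcal_T$ case the predicate $T$ literally names transcendence over $C$; in the $\Lcal_z$ case I would use that $z$ is transcendental over $C$ and write transcendence of $x$ positive-existentially by something like ``$R_A(x, z\cdot w)$ fails for structural reasons'' — more precisely, one uses that an element is transcendental over $C$ iff it is not a root of any polynomial over $C$, and encodes the latter via $z$; this is exactly the kind of manipulation the paper sets up, so I would cite the relevant earlier lemmas rather than redo it.

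It then remains to define $R_A$ on the algebraic-over-$C$ part, positive existentially and uniformly. Here I would exploit that $C$ is a \emph{field} (this hypothesis is used precisely at this point) whose elements algebraic over the prime field behave well under Frobenius: on elements algebraic over $C$, $R_A(x,y)$ can be captured by saying $x$ and $y$ generate the same finite extension together with a Frobenius-orbit condition, and the key point is that for $x$ algebraic over $C$ one can \emph{bound} the relevant exponent $s$ in terms of data available uniformly, reducing ``$\exists s$'' to a finite disjunction that is still positive existential. Combining the two cases with a disjunction — ``($x$ transcendental over $C$ and $\varphi_M(x,y)$) or ($x$ algebraic over $C$ and [the algebraic clause])'' — and noting a disjunction of positive existential formulas is positive existential, gives $\varphi_\Ccal^T$ and, after translating transcendence via $z$, $\varphi_\Ccal^z$.

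The main obstacle I anticipate is the algebraic case: making the Frobenius-orbit relation on elements algebraic over $C$ positive existentially definable \emph{uniformly} across the whole class, since a priori the finite fields of constants $C$ (or their algebraic closures inside $A$) vary and so do the degrees involved. This is the step where one genuinely needs $C$ to be a field and where the bound on $s$ from the shape $x^2=(x+n)^{p^s+1}$ together with the genus/characteristic constraints implicit in a B\"uchi class must be leveraged to keep everything first-order and existential without the quantifier over $s$ ranging over an unbounded set. The transcendental case, by contrast, is essentially just repackaging Theorem \ref{TeoremonA} uniformly and is routine once one observes the formula $\varphi_m$ depends on $m$ only through an integer parameter.
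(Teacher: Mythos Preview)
Your first paragraph is right: the paper uses the single $\Lcal_A$-formula $\varphi_{M(\Ccal)}$ uniformly, exactly as you describe. The problem is everything after that.

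Your proposed dichotomy ``($x$ transcendental and $\varphi_M(x,y)$) or ($x$ algebraic and [algebraic clause])'' cannot be positive existential: the second disjunct requires the guard $\neg T(x)$, which is a negation. And the algebraic clause itself does not exist in the form you suggest. There is no uniform bound on the exponent $s$: if $C$ contains $\F_{p^n}$ for arbitrarily large $n$, then elements of $C$ have Frobenius orbits of arbitrarily large size, so ``$\exists s$'' cannot be replaced by a finite disjunction. Even for a single fixed $s$, the formula $y=x^{p^s}$ involves the characteristic $p$, which varies across the class. The hypothesis that $C$ is a field does not help with any of this; it is used for a different reason (so that ``$xy\in C$ and $x+y\in C$'' forces $x,y\in C$, since $C$ is then integrally closed in $A$).

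The trick you are missing is this: rather than attempting to define $R_A$ on algebraic elements directly, \emph{multiply by a transcendental element to force the transcendental case}. Concretely, the paper takes
\[
\varphi_\Ccal^T(x,y)\colon\ \bigl((T(xy)\vee T(x+y))\wedge\varphi_M(x,y)\bigr)\ \vee\ \exists u,v\bigl((T(uv)\vee T(u+v))\wedge\varphi_M(u,v)\wedge\varphi_M(ux,vy)\bigr).
\]
Forward: if $y=x^{p^s}$ with $x,y$ algebraic, pick any transcendental $u$, set $v=u^{p^s}$; then $ux$ is transcendental and $vy=(ux)^{p^s}$, so the second disjunct fires. Backward: if the second disjunct holds, Theorem~\ref{TeoremonA} gives $v=u^{p^r}$ and $vy=(ux)^{p^k}$ with $u$ transcendental; comparing exponents of $u$ forces $r=k$ (since $x,y$ are algebraic), whence $y=x^{p^k}$. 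No guard $\neg T(x)$ is needed because the second disjunct already implies $R_A(x,y)$ for \emph{all} $x,y$. The $\Lcal_z$ version is the same idea with $u$ replaced by the distinguished transcendental $z$.
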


Here are some known B\"uchi classes where Theorem \ref{TeoremonB} applies:

\begin{enumerate}
\item Any non-empty subclass of the class of pairs $(F[z],F)$ where $F[z]$ is a polynomial ring over a field $F$ of characteristic at least $17$. 
\item Any non-empty subclass of the class of pairs $(F(z),F)$ where $F(z)$ is a rational function field over a field $F$ of characteristic at least $19$. 
\item Given an integer $g_0\geq0$, any non-empty subclass of the class of pairs $(A,C)$ where $A$ is a function field of a curve of genus $g\leq g_0$ and of positive characteristic at least $312g+169$, with $C$ the field of constants of $A$.
\end{enumerate}

Theorem \ref{TeoremonB} is enough for our purposes but, for sake of completeness, we prove an analogous result for a relation weaker than $R_A$, but which can be applied in more general classes. 

\begin{theorem}\label{TeoremonC}
Let $\Ccal$ be a B\"uchi class. For each pair $(A,C)$ in the class $\Ccal$, suppose that $A$ is an $\Lcal_T$-structure where $T(x)$ is interpreted as ``$x\notin C$'' and $C$ has the following properties: 
\begin{itemize}
\item for all $x\in A$, if $2x\in C$ then $x\in C$; and 
\item for all $x\in A$, if $x^2\in C$ then $x\in C$.
\end{itemize}
Let $R_A^C$ be the relation defined by 
$$
R_A(x,y) \textrm{ holds, and either } x \textrm{ or } y \textrm{ is not in } C.
$$
There exists a positive existential $\Lcal_T$-formula $\psi_\Ccal^T(x,y)$ with the following property: $\psi_\Ccal^T(x,y)$ uniformly defines $R_A^C$ in $\bar\Ccal$ (hence the collection of relations $R_A^C$ is $\Lcal_T$-uped in $\bar\Ccal$). 
\end{theorem}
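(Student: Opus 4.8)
The plan is to let $\psi_\Ccal^T$ be the formula supplied by Theorem~\ref{TeoremonA}, artificially forced into the regime of its second clause, conjoined with the clause that at least one of $x,y$ lies outside $C$. The formula $\varphi_{M_0(A,C)}$ of Theorem~\ref{TeoremonA} is one member $\varphi_m$ of a family indexed by integers $m\geq 3$, with the feature that $\varphi_m$ satisfies clauses (1) and (2) in $A$ for every pair $(A,C)$ with $\BP(A,C,m)$ positive (Theorem~\ref{TeoremonA} being the case $m=M_0(A,C)$); since $M(\Ccal)\geq M_0(A,C)$ makes $\BP(A,C,M(\Ccal))$ positive for every $(A,C)\in\Ccal$ (raising $M$ only weakens B\"uchi's problem), the single formula $\varphi_{M(\Ccal)}$ serves the whole class, exactly as in the proof of Theorem~\ref{TeoremonB}. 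I would then set
\[ \psi_\Ccal^T(x,y)\ :\equiv\ \varphi_{M(\Ccal)}(x,y)\ \wedge\ \bigl(T(x)\vee T(y)\bigr)\ \wedge\ \bigl(T(xy)\vee T(x+y)\bigr). \]
This is positive existential over $\Lcal_T=\Lcal_A\cup\{T\}$ since $T$ is applied only to the terms $x,y,xy,x+y$ and the connectives are $\wedge$ and $\vee$. One implication is immediate in every $A\in\bar\Ccal$: if $A\models\psi_\Ccal^T(x,y)$, the third conjunct gives $xy\notin C$ or $x+y\notin C$, so by clause (2) of Theorem~\ref{TeoremonA} the first conjunct yields $R_A(x,y)$, while the second conjunct says $x\notin C$ or $y\notin C$; hence $R_A^C(x,y)$ holds.

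For the reverse implication the only nontrivial point is the following \emph{key lemma}: if $R_A(x,y)$ holds and $x\notin C$ or $y\notin C$, then $xy\notin C$ or $x+y\notin C$. Granting it, if $R_A^C(x,y)$ holds then clause (1) of Theorem~\ref{TeoremonA} gives $\varphi_{M(\Ccal)}(x,y)$, the second conjunct of $\psi_\Ccal^T$ holds by assumption, and the key lemma supplies the third; so $A\models\psi_\Ccal^T(x,y)$, and $\psi_\Ccal^T$ defines $R_A^C$ in $A$, uniformly over $\bar\Ccal$.

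To prove the key lemma I would argue as follows. Since $xy$ and $x+y$ are symmetric in $x,y$, assume $y=x^{p^s}$ for some $s\geq 0$; as $x\in C$ would force $y=x^{p^s}\in C$, we may take $x\notin C$. If $s=0$ then $x+y=2x$, and $2x\in C$ would give $x\in C$ by the first closure hypothesis on $C$, so $x+y\notin C$. Suppose $s\geq 1$ and, for contradiction, that $a:=x+y\in C$ and $b:=xy\in C$. From $y=a-x$ and $xy=b$ we get $x^2-ax+b=0$; thus $C[x]$ is a free $C$-module on $\{1,x\}$ (rank $1$ would mean $x\in C$), the roots of $t^2-at+b$ are exactly $x$ and $y$ (distinct, since $x=y$ would give $2x=a\in C$), and the nontrivial $C$-algebra involution $\tau$ of $C[x]$ sends $x$ to $y=x^{p^s}$. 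Being a ring homomorphism, $\tau$ commutes with the $p^s$-power map, so $x=\tau^2(x)=\tau(x^{p^s})=(\tau x)^{p^s}=x^{p^{2s}}$; since $A$ is a domain, $x$ lies in the algebraic closure $\overline{\F}_p$ of $\F_p$ inside $A$, whence $a,b\in C\cap\overline{\F}_p$. Now the second closure hypothesis on $C$ says precisely that $C\cap\overline{\F}_p$ is closed under square roots in $\overline{\F}_p$ (if $c\in\overline{\F}_p$ and $c^2\in C$ then $c\in C$), and $\tfrac12\in\F_p\subseteq C$; hence $a^2-4b\in C\cap\overline{\F}_p$ has a square root in $C$, and $x=\tfrac12\bigl(a\pm\sqrt{a^2-4b}\bigr)\in C$, a contradiction.

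The heart of the matter — and the step I expect to need the most care — is the key lemma: the two apparently harmless closure properties of $C$ are exactly what prevents a Frobenius pair $(x,x^{p^s})$ from having both sum and product in $C$ without $x$ itself being in $C$, the argument routing through the fact that such an $x$ would be quadratic over $C$ and $p^{2s}$-periodic, hence algebraic over $\F_p$, where the square-root closure forces it back into $C$. Everything else — that $\psi_\Ccal^T$ is positive existential, the reduction to $y=x^{p^s}$, and the passage from $M_0(A,C)$ to $M(\Ccal)$ — is routine, and parallels the proof of Theorem~\ref{TeoremonB}.
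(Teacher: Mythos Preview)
Your formula and the overall strategy coincide with the paper's: the paper writes $\psi_\Ccal^T$ by inserting $T(x_1^2)\vee T(x_2^2-x_1^2-1)$ inside the existential block of $\varphi_0$, but since $\varphi_0$ forces $x_1^2=xy$ and $x_2^2-x_1^2-1=x+y$, this is logically the same as your outer conjunct $T(xy)\vee T(x+y)$. (Your extra conjunct $T(x)\vee T(y)$ is redundant, since if $x,y\in C$ then $xy,x+y\in C$; but it does no harm.) The ``key lemma'' you isolate is exactly the point the paper also has to verify.

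The gap is in your proof of that lemma for $s\ge 1$. You assert that $C[x]$ is a free $C$-module of rank $2$, that there is a well-defined $C$-algebra involution $\tau$ of $C[x]$ swapping $x$ and $y$, and then that ``since $A$ is a domain, $x$ lies in the algebraic closure $\overline{\F}_p$ of $\F_p$ inside $A$''. None of these hypotheses is available: a B\"uchi class is only a class of pairs of commutative rings with unit, and $C$ is merely a subring, not a field. Freeness of $C[x]$ over $C$ can fail, the involution need not be well-defined, and even granting $x=x^{p^{2s}}$ you cannot conclude $x$ lies in a copy of $\overline{\F}_p$ without $A$ being a domain. Your final quadratic-formula step also tacitly assumes a square root of $a^2-4b$ exists in $A$.

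The paper's argument for the same lemma is a two-line computation valid in any ring and for every $s\ge 0$ simultaneously: if $xy\in C$ and $x+y\in C$, then $(x-y)^2=(x+y)^2-4xy\in C$, so $x-y\in C$ by the second closure hypothesis; hence $2x=(x+y)+(x-y)\in C$ and $2y=(x+y)-(x-y)\in C$, so $x,y\in C$ by the first closure hypothesis, contradicting $x\notin C$ or $y\notin C$. Notice this is precisely what your quadratic-formula step was reaching for --- $\sqrt{a^2-4b}=\pm(x-y)$ --- but arrived at directly, with no detour through $\tau$, domains, or $\overline{\F}_p$.
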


\begin{notation}\label{clason}
We will denote by $\Omega$ any class of $\Lcal_{z,\ord,\ne}$-structures such that there exists a B\"uchi class $\Ccal$ of pairs $(K,C)$, where $K$ is a function field of a curve of genus at most some fixed integer $g_0$ and $C$ is the constant field of $K$, such that for each $\Lcal_{z,\ord,\ne}$-structure $\Mfrak$ in $\Omega$, there exists a pair $(K,C)$ in $\Ccal$ such that:
\begin{itemize}
\item the base set $M$ of $\Mfrak$ is a subring of $K$ and contains $C$;
\item $M$ contains some local parameter $\xi$ at some prime divisor $\pfrak$;
\item $z$ is interpreted as $\xi$;
\item $\ord(x)$ is interpreted as ``the order of $x$ at $\pfrak$ is non-negative'';
\item $\ne$ is interpreted as usual.
\end{itemize}
\end{notation} 

Note that in the above notation, since there can be more than one choice of $\xi$ for a pair $(K,C)$, several $\Lcal_{z,\ord,\ne}$-structures $\Mfrak$ may correspond to the same pair $(K,C)$ in the B\"uchi class. Note also that Theorem \ref{TeoremonB} applies to the class of pairs $(A,C)$ where $A$ ranges in $\Omega$ and is seen as a ring. 

Next theorem gives uniform definitions in other types of classes of structures. 

\begin{theorem}\label{Teoremon1} 
Multiplication is uniformly positive existentially
\begin{enumerate}
\item $\Lcal^{*,+}$-definable in $\Ncal=\{(\N;0,1,+,\mid_p)\colon p\textrm{ is prime}\}$;\label{1N}
\item $\Lcal^*$-definable in $\Zcal=\{(\Z;0,1,+,\leq,\mid_p)\colon p\textrm{ is prime}\}$; and\label{1Z}
\item $\Lcal_T^*$-definable in $\Dcal=\{(\Z;0,1,+,\mid,\mid_p,\Z\smallsetminus\{-1,0,1\})\colon p\textrm{ is prime}\}$.\label{1D}
\end{enumerate}
\end{theorem}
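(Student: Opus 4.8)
The plan is to reduce the problem of defining multiplication on $\N$ (or $\Z$) to exploiting the $p$-divisibility relation $\mid_p$ together with addition and order, using the key idea that $\mid_p$ lets us compare powers of the fixed prime $p$, and that products of powers of $p$ can be detected additively via their exponents. First I would treat case \ref{1N}, the structure $\Nfrak_p=(\N;0,1,+,\mid_p)$. Observe that $x\mid_p y$ with $x,y\neq 0$ means $y=xp^s$ for some $s\in\Z$, so the relation $\mid_p$ restricted to the set $P=\{p^s\colon s\in\N\}$ of powers of $p$ is the relation ``one divides the other''; moreover $1\mid_p y$ holds exactly when $y$ is a power of $p$, so $P$ is positive existentially $\{0,1,+,\mid_p\}$-definable. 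On $P$ the multiplication $p^a\cdot p^b=p^{a+b}$ corresponds to adding exponents; since the exponent map $p^s\mapsto s$ is a bijection $P\to\N$, and since addition is in the language, one gets a positive existential grip on the ``exponent arithmetic''. The essential trick (this is the classical device from Denef and successors referenced just before the theorem) is: for $a,b,c\in\N$, one has $c=ab$ if and only if, working with the powers $p^a,p^b$, a certain relation among $p^{a}$, $p^{b}$, $p^{ab}$ holds that can be captured by $\mid_p$ and $+$; concretely, $p^{ab}$ is the unique power of $p$ such that repeatedly ``scaling by $p^a$'' exactly $b$ times lands on it — and ``repeating $b$ times'' is encoded by divisibility chains.

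The cleanest route I would take is to first define, positive existentially in $\Nfrak_p$, the ternary relation $E(a,b,c)$ meaning ``$c=ab$'' by the standard identity-trick: $c=ab \iff (a+b)^2 = a^2+b^2+2c$, so it suffices to define the squaring function $a\mapsto a^2$. To define squaring I would use $\mid_p$ to build the geometric-type progression: the point is that $\sum_{i<a} p^{ib}$-style sums, or rather the relation ``$p^{a^2}$ is obtained from $1$ by $a$ successive multiplications by $p^{a}$'', can be phrased by saying there is an element $w$ (a power of $p$) divisible in a controlled staircase by $1,p^a,p^{2a},\dots$; each step is a single application of $\mid_p$ ``with ratio $p^a$'', and there are $a$ steps, where $a$ itself is read off as an exponent. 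Making ``exactly $a$ steps'' positive existential is the delicate part: one encodes the whole staircase as a single number (e.g. a number in base a large power of $p$ whose ``digits'' are the successive terms) and writes down, in terms of $+$ and $\mid_p$, that consecutive digits are in ratio $p^a$, that the first digit is $1$, and that the number of digits is $a+1$; the last condition is where $\mid_p$ must be used to measure length. I expect to assemble these into a single positive existential $\Lcal^{*,+}$-formula $\mu_\Ncal(a,b,c)$ defining multiplication, independent of $p$ — uniformity is automatic because the only arithmetic facts used about $p$ are that it is $>1$ and that $\mid_p$ is its associated relation, which hold for every prime.

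For case \ref{1Z}, the structure $\Zfrak_p=(\Z;0,1,+,\leq,\mid_p)$, I would first use $\leq$ to positive existentially define $\N$ inside $\Z$ as $\{x\colon x\geq 0\}$, then note that $\mid_p$ on $\Z$ restricts to $\mid_p$ on $\N$ (up to sign, but on nonnegative elements the sign is immaterial), so the formula $\mu_\Ncal$ from case \ref{1N} relativized to $\{x\geq 0\}$ defines multiplication of nonnegative integers; extending to all of $\Z$ is routine sign bookkeeping using $+$ and $\leq$ (write $c=ab$ via $|a|,|b|,|c|$ and a sign condition), still positive existential since $x=-y$ is $x+y=0$ and absolute value is positive-existentially definable using $\leq$. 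For case \ref{1D}, the structure $\Dfrak_p=(\Z;0,1,+,\mid,\mid_p,\Z\setminus\{-1,0,1\})$, I do not have $\leq$ but I have the ordinary divisibility relation $\mid$ and the predicate $R$ for ``not a unit''; the standard fact (going back to Julia Robinson) is that $\leq$, or enough of the order, is positive existentially definable from $+$, $\mid$ and the unit predicate over $\Z$ — more precisely one defines ``$x$ is a nonnegative integer'' or directly defines multiplication from $\mid$ and $+$ by Robinson-style arguments — so I would reduce case \ref{1D} to case \ref{1Z} (or directly to \ref{1N}) by first producing a positive existential definition of $\leq$ (or of $\N$) from $\{0,1,+,\mid,R\}$, then invoking the already-built $\mu$.

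The main obstacle I anticipate is the ``counting the steps'' issue in the staircase construction: phrasing, positive existentially and uniformly in $p$, that an encoded chain $1=w_0, w_1,\dots,w_k$ of powers of $p$ with $w_{i+1}=p^a w_i$ has exactly length $k=a$ (so that its last term is $p^{a\cdot a}$, or with ratio $p^b$ gives $p^{ab}$). This is exactly the kind of bounded-iteration-made-existential step that is classically handled by clever use of a divisibility relation to measure lengths, but getting all the bounds to work without introducing a multiplication one does not yet have — and without using $p$-dependent constants — requires care; I expect this to be where most of the real work lies, with the rest being assembly and the sign/relativization bookkeeping for cases \ref{1Z} and \ref{1D}.
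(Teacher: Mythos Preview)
Your proposal has two genuine gaps, one in Item~\ref{1N} and one in Item~\ref{1D}.

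For Item~\ref{1N}, your staircase/digit-encoding approach is not what the paper does, and you explicitly flag the counting step (``exactly $a$ steps'') as unresolved. This is a real obstacle: extracting digits from a number written in base $B$ typically needs multiplication or division by $B$, which is exactly what you do not yet have, so there is a circularity risk you have not addressed. The paper bypasses this entirely via the trick (due to Pheidas) of defining first, not full multiplication, but the relation $z=xp^a$ with $y=p^a$ a power of $p$: one asserts simultaneously that $y$ is a power of $p$, $z\ge x$, $x\mid_p z$, $(x+1)\mid_p(z+y)$, and $(x+y)\mid_p(z+y^2)$ (squaring of powers of $p$ having been defined earlier). A $p$-adic valuation analysis of these three constraints forces $z=xy$; no sequence encoding or step-counting is needed. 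From ``multiply by a power of $p$'' one then gets full multiplication by the standard two-parameter trick. This is the missing idea.

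For Item~\ref{1D}, you write that it is a ``standard fact (going back to Julia Robinson)'' that $\le$ is \emph{positive existentially} definable from $+$, $\mid$ and a unit predicate over $\Z$. Robinson's results define multiplication from $+$ and $\mid$ only \emph{first-order}, with quantifier alternation; a positive existential definition of $\le$ from $\{0,1,+,\mid,T\}$ alone is not standard and, as far as I know, not available. The paper does not try to recover $\le$ at all. Instead it works directly in $\Dfrak_p$: it defines the set $CO_p$ of integers coprime to $p$ (via Euler's theorem), defines squaring on $CO_p$ by a size-bounding argument using large powers of $p$ and ordinary divisibility, defines products of an element of $CO_p$ with a power of $p$, and then assembles squaring on all of $\Z$ by writing every integer as (coprime part)$\times$(power of $p$). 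Your reduction to Item~\ref{1Z} does not go through as stated.

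Your treatment of Item~\ref{1Z} (relativize to $\{x\ge 0\}$ and do sign bookkeeping) matches the paper.
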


Before stating our main results, we need to introduce the following definition. 

\begin{definition}
Consider two languages $\Lcal$ and $\Lcal'$. Let $\Mfrak$ be an $\Lcal$-structure and $\Ucal$ be a class of $\Lcal'$-structures. Let $\Gcal$ be a set of $\Lcal$-sentences and $\Gcal'$ a set of $\Lcal'$-sentences. We will say that $(\Gcal,\Mfrak)$ is \emph{uniformly encodable} in $(\Gcal',\Ucal)$ if there exists an algorithm $\Acal$ that, given a formula $F\in\Gcal$, returns a formula $\Acal(F)\in\Gcal'$ such that the following are equivalent:
\begin{enumerate}
\item $\Mfrak$ satisfies $F$.
\item Any structure $\Ufrak$ in $\Ucal$ satisfies $\Acal(F)$.
\item There exists a structure $\Ufrak$ in $\Ucal$ that satisfies $\Acal(F)$. 
\end{enumerate}
\end{definition}

\begin{remark}\label{RemUnionInter}
Note that a codification is uniform exactly when the formulas that are true in $\Mfrak$ are sent through the algorithm to  formulas in the intersection of the theories of the structures in $\Ucal$, while the formulas that are not true in $\Mfrak$ are sent to the complement of the union of these theories.
\end{remark}

\begin{remark}\label{RemFilter}
Let $\Acal$ be an algorithm that uniformly encodes a pair $(\Gcal,\Mfrak)$ in a pair $(\Gcal',\Ucal)$, where $\Gcal$ is a set of sentences over a language $\Lcal$ and $\Gcal'$ is a set of sentences over a language $\Lcal'$.
\begin{enumerate}
\item For any non-empty subset $\Gcal_0$ of $\Gcal$, the algorithm $\Acal$ uniformly encodes $(\Gcal_0,\Mfrak)$ in $(\Gcal',\Ucal)$.\label{fil1}
\item For any set $\Gcal'_0$ of $\Lcal'$-sentences that contains $\Gcal'$, the algorithm $\Acal$ uniformly encodes $(\Gcal,\Mfrak)$ in $(\Gcal'_0,\Ucal)$.\label{fil2}
\item For any non-empty subclass $\Ucal_0$ of $\Ucal$, the algorithm $\Acal$ uniformly encodes $(\Gcal,\Mfrak)$ in $(\Gcal',\Ucal_0)$.\label{fil3}
\item For any language $\Lcal''=\Lcal'\cup X$, with $\Lcal'\cap X=\varnothing$, the algorithm $\Acal$ uniformly encodes $(\Gcal,\Mfrak)$ in $(\Gcal',\Ucal^X)$, no matter the interpretation of the elements of $X$ given in the structures $\Ufrak^X$ of $\Ucal^X$.\label{fil4}
\item For any language $\Lcal''=\Lcal\smallsetminus X\ne\varnothing$, if the set of $\Lcal''$-formulas $\Gcal''$ which are in $\Gcal$ is non-empty, then the algorithm $\Acal$ uniformly encodes $(\Gcal'',\Mfrak_X)$ in $(\Gcal',\Ucal)$.\label{fil5}
\end{enumerate}
\end{remark}

Uniform encodability can be used to show very strong undecidability results in the following way (the proof will be given in Section \ref{UnifEncod}). 

\begin{theorem}\label{PPV}
Suppose that a pair $(\Gcal,\Mfrak)$ is uniformly encodable in a pair $(\Gcal',\Ucal)$ and that there is no algorithm to decide whether or not a formula $F$ in $\Gcal$ is true in $\Mfrak$. Let $\Ccal$ be a non-empty collection of non-empty subclasses of $\Ucal$. There is no algorithm to solve the following problem: 
\begin{quote}
\textbf{(P)} Given $F\in\Gcal'$, decide whether or not there exists a class $\Vcal$ in the collection $\Ccal$ such that every structure $\Ufrak$ in $\Vcal$ satisfies $F$. 
\end{quote}
\end{theorem}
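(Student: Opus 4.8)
The plan is to argue by contradiction, reducing the decidability of the truth problem for $\Gcal$ in $\Mfrak$ to the hypothetical algorithm solving \textbf{(P)}. Suppose such an algorithm exists. Given a formula $F\in\Gcal$, apply the encoding algorithm $\Acal$ to obtain $\Acal(F)\in\Gcal'$, and then run the algorithm for \textbf{(P)} on $\Acal(F)$. The claim is that this decides whether $\Mfrak$ satisfies $F$, contradicting the assumed undecidability.

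First I would unwind what \textbf{(P)} answers on input $\Acal(F)$. If $\Mfrak$ satisfies $F$, then by the definition of uniform encodability, \emph{every} structure $\Ufrak$ in $\Ucal$ satisfies $\Acal(F)$; since $\Ccal$ is a non-empty collection of non-empty subclasses of $\Ucal$, picking any $\Vcal\in\Ccal$ gives a class all of whose members satisfy $\Acal(F)$, so \textbf{(P)} returns ``yes''. Conversely, if $\Mfrak$ does not satisfy $F$, then by uniform encodability \emph{no} structure in $\Ucal$ satisfies $\Acal(F)$ (this is the contrapositive of the implication ``some $\Ufrak\in\Ucal$ satisfies $\Acal(F)$'' $\Rightarrow$ ``$\Mfrak$ satisfies $F$''). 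Since every $\Vcal\in\Ccal$ is a non-empty subclass of $\Ucal$, each such $\Vcal$ contains at least one structure, and that structure fails $\Acal(F)$; hence there is no $\Vcal\in\Ccal$ with all members satisfying $\Acal(F)$, so \textbf{(P)} returns ``no''. Thus the composed procedure decides truth of $F$ in $\Mfrak$.

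Finally I would note that the composition is indeed an algorithm: $\Acal$ is an algorithm by hypothesis, and the solver for \textbf{(P)} is assumed to be one, so their composition is computable. This yields a decision procedure for $\{F\in\Gcal : \Mfrak\models F\}$, contradicting the hypothesis that no such procedure exists, and completes the proof.

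I do not expect a genuine obstacle here; the content is entirely in correctly using the two directions of the biconditional in the definition of uniform encodability, together with the two non-emptiness hypotheses (each $\Vcal\in\Ccal$ is non-empty, and $\Ccal$ itself is non-empty) — these are exactly what make the ``yes'' and ``no'' cases line up. The only point requiring a little care is that the undecidability hypothesis is about membership in $\Gcal$, so one must make sure $\Acal$ is applied to arbitrary $F\in\Gcal$ and that $\Acal(F)\in\Gcal'$, which is guaranteed by the definition of $\Acal$.
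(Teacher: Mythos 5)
Your proposal is correct and follows essentially the same route as the paper: compose the encoding algorithm with the hypothetical solver for \textbf{(P)} and use the two directions of the uniform-encodability equivalence together with the non-emptiness of $\Ccal$ and of each $\Vcal\in\Ccal$. The only cosmetic difference is that you reason from the truth value of $F$ in $\Mfrak$ to the answer of \textbf{(P)}, while the paper reasons from the answer of \textbf{(P)} back to $\Mfrak$; the two are logically interchangeable here.
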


\begin{theorem}\label{Teoremon2} 
The pair $(\fpe_{\Lcal_A},\N)$ is uniformly encodable in
\begin{enumerate}
\item $(\fpe_{\Lcal^*},\Zcal)$;\label{2Z}
\item $(\fpe_{\Lcal^{*,+}},\Ncal)$; and\label{2N}
\item $(\fpe_{\Lcal_{z,\ord,\ne}},\Omega)$ (where $\Omega$ is any class as defined in Notation \ref{clason}).\label{2O}
\item\label{2Ob} $(\fpe_{\Lcal},\Omega)$ with 
	\begin{enumerate}
	\item $\Lcal=\Lcal_{z,\ord}$ if $\ne$ is $\Lcal_{z,\ord}$-uped in $\Omega$.
	\item $\Lcal=\Lcal_{z,\ne}$ if $\ord$ is $\Lcal_{z,\ne}$-uped in $\Omega$.
	\item $\Lcal=\Lcal_{z}$ if $\ne$ and $\ord$ are $\Lcal_{z}$-uped in $\Omega$.
	\end{enumerate} 
\end{enumerate}
\end{theorem}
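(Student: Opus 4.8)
The plan is to establish Theorem~\ref{Teoremon2} by constructing, for each target class, an explicit algorithm that translates a positive existential $\Lcal_A$-sentence about $\N$ into a positive existential sentence over the relevant language, and then to verify the three-way equivalence in the definition of uniform encodability. The backbone of all four items is the same: in each class $\Ucal$ (namely $\Zcal$, $\Ncal$, or $\Omega$) we must (i) produce a positive existentially definable copy of the natural numbers with their ring operations, uniformly across the structures of $\Ucal$, and (ii) check that the defining formulas behave correctly both in the ``for all'' and ``for some'' directions. For Items~\eqref{2Z} and \eqref{2N} the copy of $(\N;0,1,+,\cdot)$ is handed to us by Theorem~\ref{Teoremon1}: multiplication is $\Lcal^*$-uped in $\Zcal$ and $\Lcal^{*,+}$-uped in $\Ncal$, so given $F\in\fpe_{\Lcal_A}$ we let $\Acal(F)$ be $F$ with each occurrence of $x\cdot y = w$ replaced by the uniform positive existential formula for multiplication; in the case of $\Zcal$ we also relativize the quantifiers to the nonnegative elements using $\geq 0$, which is available in $\Lcal^*$. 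Since the defining formula works identically in every structure of the class and the interpreted substructure $(\N;0,1,+,\cdot)$ is literally the same in each, satisfaction of $\Acal(F)$ does not depend on which structure of $\Ucal$ we pick, giving the equivalence of the three clauses at once.

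For Item~\eqref{2O} the situation is less immediate, because the structures of $\Omega$ are subrings of function fields of positive characteristic, not copies of $\Z$ or $\N$. Here the strategy is to first invoke Theorem~\ref{TeoremonB}: $\Omega$ gives rise to a B\"uchi class of pairs $(K,C)$ with $K$ a function field of bounded genus and $C$ its constant field, hence the family of relations $R_A$ (``$y = x^{p^s}$ for some $s\in\Z$'', using Definition~\ref{DefRA}) is $\Lcal_z$-uped in $\bar\Ccal$. Iterates of a fixed transcendental $t$ under Frobenius, $t, t^p, t^{p^2},\dots$, together with the relation $R_A$ and the field operations, then let us positive-existentially encode a copy of $\N$ inside each $M$: an integer $n$ is represented by (a name for) $t^{p^n}$, addition of exponents becomes a positive existential condition expressible via $R_A$ and multiplication of field elements, and the predicate $\ord$ together with $\ne$ is used to force the Frobenius-iterate witnesses to be genuine powers of $t$ rather than spurious elements — this is the standard mechanism referenced in the list of works after Definition~\ref{DefRA}. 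The translation $\Acal$ replaces each atomic subformula of $F$ by the corresponding positive existential $\Lcal_{z,\ord,\ne}$-condition on these exponent-codes; Theorem~\ref{TeoremonA}'s clause~\eqref{Aeq} is what guarantees the encoding is faithful whenever the relevant elements are non-constant, which the coding arranges automatically. Finally Item~\eqref{2Ob} is a purely formal consequence: if $\ne$, or $\ord$, or both, are already $\Lcal'$-uped in $\Omega$ for the smaller language $\Lcal'$ in question, we may substitute those uniform positive existential definitions into $\Acal(F)$ to eliminate the extra symbols, and the resulting sentence lies in $\fpe_{\Lcal'}$ while defining the same set in every structure of $\Omega$; this is exactly the stability of uniform encodability under enlarging the target symbol set recorded in Remark~\ref{RemFilter}, applied in reverse.

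The main obstacle I expect is Item~\eqref{2O}: one has to be careful that the positive existential formula encoding ``$m + n = k$'' on exponents is correct in \emph{every} function field of the class simultaneously and in \emph{both} directions of the equivalence. The subtle point is that a positive existential formula can only assert the existence of witnesses, so one must show that whenever $\Acal(F)$ is satisfied in \emph{some} $\Mfrak\in\Omega$ the witnesses necessarily encode a genuine solution of $F$ in $\N$ (no ``junk'' solutions coming from constants or from the ambient field), and conversely that a true solution in $\N$ lifts to witnesses present in \emph{every} $\Mfrak\in\Omega$. The first is handled by the $T$/non-constant (equivalently $\ord$-and-$\ne$) guards forcing all coded quantities to be Frobenius iterates of the single transcendental $z$, so that $R_A$ genuinely tracks exponents modulo nothing, and by the hypothesis that $p > 312g_0+169$ so that B\"uchi's problem has a positive answer uniformly (Definition~\ref{pomo}, with $M(\Ccal)$ finite and exceeding at most finitely many characteristics, all of which are excluded by the genus bound); the second follows because the witnesses one writes down — explicit powers of $z$ — exist in the subring $M$ by hypothesis that $M$ contains a local parameter and the constant field. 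Once this coding is pinned down, the three-clause equivalence and the reduction of the other items are routine bookkeeping.
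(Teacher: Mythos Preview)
Your treatment of Items~\ref{2Z}, \ref{2N}, and \ref{2Ob} matches the paper's: replace multiplication by its uniform positive existential definition (from Theorem~\ref{Teoremon1}), relativize quantifiers to $\pos$ in the case of $\Zcal$, and for Item~\ref{2Ob} substitute the uped formulas for $\ne$ and/or $\ord$ via Proposition~\ref{Atransform}.

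For Item~\ref{2O}, however, there is a genuine gap in your encoding. You represent $n\in\N$ by the Frobenius iterate $t^{p^n}$, and assert that ``addition of exponents becomes a positive existential condition expressible via $R_A$ and multiplication of field elements.'' But with codes $a=t^{p^m}$, $b=t^{p^n}$, $c=t^{p^k}$, the condition $k=m+n$ amounts to ``$c=a^{p^s}$ and $b=t^{p^s}$ for the \emph{same} $s$,'' and $R_A$ only asserts existence of \emph{some} Frobenius exponent; it does not let you tie two such exponents together. Multiplying the codes does not help either, since $a\cdot b=t^{p^m+p^n}$, not $t^{p^{m+n}}$. The ``standard mechanism'' you cite (Denef, Pheidas, etc.) does \emph{not} use this encoding.

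The paper's route is a genuine two-step encoding through the intermediate class $\Ncal$. First one encodes $(\fpe_{\Lcal_A},\N)$ in $(\fpe_{\Lcal^{*,+}},\Ncal)$ as in Item~\ref{2N}. Then, for each prime $p$, one passes from $\Nfrak_p=(\N;0,1,+,\mid_p)$ to the function-field structures of characteristic $p$ via the order map $f_\Ufrak\colon x\mapsto\ord_\pfrak(x)$, restricted to nonzero elements of nonnegative order. Under this map $n$ is represented by (the class of) $z^n$, \emph{addition in $\N$ becomes multiplication in the ring} (since orders add), and the relation $\mid_p$ is interpreted by $R_A$ (since $\ord(y)=p^s\ord(x)$ is witnessed by $u_i=z^{\ord(x_i)}$). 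The symbol $\ord$ is used not to ``force witnesses to be genuine powers of $t$'' but to define the equivalence $S(x,y)\colon\ord_\pfrak(x)=\ord_\pfrak(y)$, which is exactly the kernel $\sim_{f_\Ufrak}$; the symbol $\ne$ is used to restrict quantifiers to nonzero regular elements so that $f_\Ufrak$ is onto. Proposition~\ref{Lemon} (applied with $f_\Ufrak$ onto and relation-onto) then packages this into the required algorithm. So the missing idea is: code $n$ by $z^n$, not $z^{p^n}$; route the encoding through $\Nfrak_p$; and let $R_A$ interpret $\mid_p$ rather than try to define arithmetic on Frobenius exponents directly.
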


In the following corollary, we specify some classes $\Omega$ for which we do have uniform definition of $\ne$ or $\ord$ (for Item 1, we use Lemma \ref{polne}). 

\begin{corollary}\label{Teoremon2a} 
The pair $(\fpe_{\Lcal_A},\N)$ is uniformly encodable in the pairs
\begin{enumerate}
\item $(\Fcal_{\Lcal_z}^{\rm pe},\bar\Ccal)$, where $\Ccal$ is any B\"uchi class of pairs $(A,C)$ where $C$ is a field and $A$ is a polynomial ring over $C$ (in particular for the class of all polynomial rings of characteristic at least $17$). 
\item $(\fpe_{\Lcal_{z,\ord}},\bar\Ccal)$, where $\Ccal$ is any B\"uchi class of pairs $(A,C)$ where $C$ is a field and $A$ is a rational function field over $C$ (in particular for the class of all rational function fields of characteristic at least $19$). 
\item $(\fpe_{\Lcal_{z,\ord}},\bar\Ccal)$, where $\Ccal$ is any B\"uchi class of pairs $(A,C)$ where $C$ is a field and $A$ is a function field of a curve of genus at most some fixed integer $g_0$, with constant field $C$ (in particular for the class of all such function fields of genus at most $g_0$ whose characteristic is at least $312g+169$, where $g$ is the genus of the function field). 
\end{enumerate}
\end{corollary}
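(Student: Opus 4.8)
The plan is to read off all three items from Theorem~\ref{Teoremon2}, by producing in each case a class $\Omega$ of $\Lcal_{z,\ord,\ne}$-structures of the kind described in Notation~\ref{clason} whose underlying rings are exactly those of $\bar\Ccal$, and then eliminating the symbol $\ne$ (and, in the polynomial case, also the symbol $\ord$) from the language by a uniform positive existential definition, so as to invoke clause~\ref{2Ob} of that theorem with the smaller language asserted in the corollary.

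Given a B\"uchi class $\Ccal$ as in the hypothesis, I would turn each ring $A$ of $\bar\Ccal$ into an $\Lcal_{z,\ord,\ne}$-structure as follows. In Items~2 and~3 the ring $A=K$ is already a function field of a curve, of genus $0$ in the rational case and of genus at most $g_0$ in general; I choose a prime divisor $\pfrak$ of $A$ over its constant field $C$, a local parameter $\xi$ at $\pfrak$ (which lies in $A$), interpret $z$ as $\xi$, read $\ord(x)$ as ``the order of $x$ at $\pfrak$ is non-negative'', and interpret $\ne$ as usual; the resulting class $\Omega$ is admissible in the sense of Notation~\ref{clason}, witnessed by $\Ccal$ itself, now read as a class of pairs $(K,C)=(A,C)$. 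In Item~1 the ring $A=C[z]$ is a polynomial ring; I take $K=C(z)$ (genus $0$), $\pfrak$ the zero of $z$ and $\xi=z$, so that $\ord(x)$ then holds for every $x\in A$, and the witnessing B\"uchi class is taken to consist of the pairs $(C(z),C)$. In all three cases, by the remark following Notation~\ref{clason}, clause~\ref{2O} of Theorem~\ref{Teoremon2} already yields a uniform encoding of $(\fpe_{\Lcal_A},\N)$ in $(\fpe_{\Lcal_{z,\ord,\ne}},\bar\Ccal)$.

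To descend to the language stated in the corollary I would then apply the appropriate sub-clause of clause~\ref{2Ob} of Theorem~\ref{Teoremon2}. In Items~2 and~3 every $A\in\bar\Ccal$ is a field, so $x\ne y$ is equivalent to the positive existential $\Lcal_A$-formula $\exists u\,\exists w\,(y+u=x\wedge uw=1)$; hence $\ne$ is $\Lcal_{z,\ord}$-uped in $\Omega$, and clause~\ref{2Ob}(a) gives the encoding in $(\fpe_{\Lcal_{z,\ord}},\bar\Ccal)$. In Item~1 the predicate $\ord$ is identically true on $C[z]$, hence trivially $\Lcal_z$-uped there (e.g.\ by $x=x$), while $\ne$ is $\Lcal_z$-uped in the class of all polynomial rings over fields by Lemma~\ref{polne} and hence in $\Omega$; clause~\ref{2Ob}(c) then gives the encoding in $(\fpe_{\Lcal_z},\bar\Ccal)$, the $\Lcal_z$-reduct of $\Omega$ being exactly $\bar\Ccal$ with $z$ interpreted as the variable. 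For the parenthetical ``in particular'' statements it only remains to name concrete B\"uchi classes: the class of all pairs $(F[z],F)$ with $F$ a field of characteristic at least $17$ is B\"uchi with uniform bound $14$, and the class of all pairs $(F(z),F)$ with $F$ of characteristic at least $19$ is B\"uchi with uniform bound $18$ (see \cite{PheidasVidaux2,PheidasVidaux2bis}), while the class of all pairs $(K,C)$ with $K$ a function field of a curve of genus $g\le g_0$, constant field $C$, and characteristic at least $312g+169$ is B\"uchi with uniform bound $312g_0+169$ (see \cite{ShlapentokhVidaux}; here one uses that a positive answer to B\"uchi's problem for some $M$ entails a positive answer for every $M'\ge M$). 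In each case the maximum $M(\Ccal)$ is finite and exceeds the characteristic for at most finitely many characteristics, which is harmless.

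I expect the one genuinely delicate point to be the admissibility check, in the polynomial case, for the class $\Omega$ built above: Notation~\ref{clason} demands a genus-bounded B\"uchi class of pairs whose \emph{first} coordinate is the full function field $K=C(z)$, and not merely the polynomial subring $A=C[z]$, so one must know that $(C(z),C)$ is itself a B\"uchi pair with a uniform bound over the characteristics at hand. For characteristic at least $19$ this is precisely the known rational-function-field B\"uchi theorem \cite{PheidasVidaux2bis}, but the range ``characteristic at least $17$'' claimed in the corollary for the polynomial case presumably rests on a refinement of this framework --- for instance a variant of clause~\ref{2O} valid for subrings of a function field that requires only the B\"uchi property of the subring itself, which for polynomial rings holds from characteristic $17$ on by \cite{PheidasVidaux2}, together with the observation that the place $z=0$ is the only valuation that intervenes in the encoding and that $\ord$ is trivial there. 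Everything else --- matching up the various languages, reducts and interpretations, the positive existential definability of $\ne$ over fields, the triviality of $\ord$ on polynomials, and citing the three known B\"uchi theorems --- is routine bookkeeping.
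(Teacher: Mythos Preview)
Your approach is exactly the paper's: the authors give no separate proof of the corollary, only the one-line remark preceding it that one ``specifies some classes $\Omega$ for which we do have uniform definition of $\ne$ or $\ord$ (for Item 1, we use Lemma~\ref{polne})'', i.e.\ one invokes Theorem~\ref{Teoremon2}(\ref{2Ob}) after observing that $\ne$ is $\Lcal_{z,\ord}$-uped over fields and that both $\ne$ and $\ord$ are $\Lcal_z$-uped over polynomial rings.

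Your one reservation --- that Notation~\ref{clason} literally asks for a B\"uchi class whose first coordinates are full function fields, which for $C(z)$ is only known from characteristic $19$, whereas Item~1 claims characteristic $\ge 17$ --- is a sharp reading of the framework, and your conjectured resolution is in fact the one the paper intends. The sentence immediately following Notation~\ref{clason} (``Theorem~\ref{TeoremonB} applies to the class of pairs $(A,C)$ where $A$ ranges in $\Omega$'') is precisely the point: in the proof of Theorem~\ref{Teoremon2}(\ref{2O}) the B\"uchi hypothesis enters only through Theorem~\ref{TeoremonB}, and there it is invoked for the pairs $(A,C)$ with $A$ the base ring of a structure in $\Omega$, not for the ambient function field. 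For Item~1 these are the pairs $(F[z],F)$, which are B\"uchi from characteristic $17$ by \cite{PheidasVidaux2}. So no genuine refinement is needed --- the function-field B\"uchi class in Notation~\ref{clason} is scaffolding for the genus bound and the valuation data, while the operative B\"uchi bound is that of the subrings themselves. With that reading your argument is complete and coincides with the paper's.
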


\begin{theorem}\label{Teoremon2b} 
The pair $(\fpe_{\Lcal_A},\Z)$ is uniformly encodable in
\begin{enumerate}
\item $(\fpe_{\Lcal^*},\Zcal)$;\label{2bZ}
\item $(\fpe_{\Lcal_T^*},\Dcal)$;\label{2bD}
\item $(\fpe_{\Lcal_T},\Ccal)$, where $\Ccal$ is the class of all polynomial rings over a field of odd positive characteristic, where $T(x)$ is interpreted in each structure as ``$x$ is non-constant''. \label{2bCT}
\end{enumerate}
\end{theorem}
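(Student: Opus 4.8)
The plan is to read items (1) and (2) directly off Theorem~\ref{Teoremon1}, and to handle item (3) by interpreting $\Z$ (through $\N$) inside the polynomial rings by means of the Frobenius relation. For (1) and (2): every structure in $\Zcal$ (respectively $\Dcal$) has underlying set $\Z$ with the standard interpretations of $0,1,+$, and by Theorem~\ref{Teoremon1}(\ref{1Z}) (respectively (\ref{1D})) there is a fixed positive existential $\Lcal^*$-formula (respectively $\Lcal_T^*$-formula) $\mu(a,b,c)$ defining $\{(a,b,c):c=ab\}$ in every such structure. The algorithm $\Acal$ takes $F\in\fpe_{\Lcal_A}$, flattens it (naming each subterm by a fresh existentially quantified variable) so that every atomic subformula has the form $y=0$, $y=1$, $y=y_1+y_2$ or $y=y_1\cdot y_2$, and replaces every occurrence of $y=y_1\cdot y_2$ by $\mu(y_1,y_2,y)$; the result $\Acal(F)$ is positive existential since $\mu$ is, and one only conjoins and quantifies existentially. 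In each member of $\Zcal$ (respectively $\Dcal$) the symbols $0,1,+$ are standard and $\mu$ defines standard multiplication, so $\Acal(F)$ holds there if and only if $(\Z;0,1,+,\cdot)\models F$; this truth value does not depend on the member chosen, which is exactly uniform encodability.

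For item (3), a preliminary computable step reduces $(\fpe_{\Lcal_A},\Z)$ to $(\fpe_{\Lcal_A},\N)$: writing each integer unknown of $F$ as a difference of two natural unknowns and moving the monomials with negative coefficients to the opposite side of every equation produces $\widehat F\in\fpe_{\Lcal_A}$ with $\Z\models F\iff\N\models\widehat F$. For the subclass $\Ccal_0\subseteq\Ccal$ of polynomial rings of characteristic at least $17$ — a B\"uchi class satisfying the hypotheses of Theorem~\ref{TeoremonC}, since oddness gives $2x\in F\Rightarrow x\in F$ and a degree count gives $x^2\in F\Rightarrow x\in F$ — the relation $R_A^C$, hence also ``$y$ is a $p^s$-th power of $x$'' (namely $R_A^C(x,y)\wedge T(x)\wedge\exists w\,(xw=y)$), is positive existentially $\Lcal_T$-definable uniformly. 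On top of this one carries out the Diophantine interpretation of $(\N;+,\cdot)$ in $F[z]$ that underlies Theorem~\ref{Teoremon2}(\ref{2O}) and goes back to Denef, but relative to an \emph{existentially quantified non-constant reference element} $u$ (together with a companion $v$ coprime to $u$, e.g. $v=u+1$, which is then automatically multiplicatively independent from every $u^{p^n}$) rather than to the constant $z$: a natural number $n$ is represented by $u^{p^n}$, addition of exponents is read off from composition of Frobenius powers and the multiplicative independence of $u$ and $v$, and multiplication of exponents is obtained by a further standard Diophantine argument. One also uses, inside the construction, that $\ne$ is positive existentially $\Lcal_T$-definable in $F[z]$: since $F$ is a field, $x\ne 0$ holds exactly when $x$ is a unit or $T(x)$, so $x\ne y$ is defined by $\exists v,w\,\big(w+y=x\wedge(wv=1\vee T(w))\big)$. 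As the whole construction works for every non-constant $u$, binding $u$ by $T(u)$ yields an algorithm $G\mapsto\Acal_0(G)\in\fpe_{\Lcal_T}$ with $F[z_0]\models\Acal_0(G)\iff\N\models G$ for every $F[z_0]\in\Ccal_0$ (the direction $\Leftarrow$ by taking $u=z_0$).

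To cover the remaining odd characteristics $q\in\{3,5,7,11,13\}$, one produces for each such $q$ — by the same route applied to the instance of B\"uchi's problem in characteristic $q$ when available, or else by a coefficient-field-uniform version of Denef's original interpretation — an algorithm $\Acal_q$ with $F[z_0]\models\Acal_q(G)\iff\N\models G$ for all polynomial rings of characteristic $q$, and glues everything by the positive existential characteristic test: the single equation $q\cdot 1=0$ singles out characteristic $q$, while $\bigwedge_{q\in\{3,5,7,11,13\}}\exists v\,((q\cdot 1)\,v=1)$ singles out characteristic at least $17$. Then
$$
\Acal(F):=\Big(\bigvee_{q\in\{3,5,7,11,13\}}\big((q\cdot 1=0)\wedge\Acal_q(\widehat F)\big)\Big)\vee\Big(\big(\bigwedge_{q\in\{3,5,7,11,13\}}\exists v\,((q\cdot 1)\,v=1)\big)\wedge\Acal_0(\widehat F)\Big)
$$
is positive existential over $\Lcal_T$, computable from $F$, and in every member of $\Ccal$ exactly one disjunct is active and evaluates to the member-independent statement $\Z\models F$. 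Hence $\Z\models F$ is equivalent both to ``every member of $\Ccal$ satisfies $\Acal(F)$'' and to ``some member of $\Ccal$ satisfies $\Acal(F)$'', that is, $(\fpe_{\Lcal_A},\Z)$ is uniformly encodable in $(\fpe_{\Lcal_T},\Ccal)$.

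The main difficulty will be entirely in item (3): specifically (a) the small odd characteristics $3,5,7,11,13$, which are not covered by the B\"uchi results recalled in the text and for which one must either invoke the corresponding instances of B\"uchi's problem or produce a field-uniform variant of Denef's argument by hand; and (b) checking that the $\N$-interpretation behind Theorem~\ref{Teoremon2}(\ref{2O}) and Theorem~\ref{TeoremonC} (equivalently Theorem~\ref{TeoremonB}) really uses the would-be constant only as ``a transcendental element'', so that replacing it by an existentially quantified $T$-element, and replacing $\ne$ by its $\Lcal_T$-definition above, is legitimate. Items (1) and (2), by contrast, are bookkeeping on top of Theorem~\ref{Teoremon1}.
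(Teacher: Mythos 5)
Items (1) and (2) of your proposal are correct and essentially identical to the paper's proof: one replaces each occurrence of $\cdot$ by the uniform positive existential definition of multiplication supplied by Theorem \ref{Teoremon1} (Items \ref{1Z} and \ref{1D} respectively), which is exactly the ``one step encoding process'' via Proposition \ref{Atransform}.

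Item (3) is where all the content lies, and your argument for it has a genuine gap. The theorem covers \emph{all} odd characteristics, but the route you take through B\"uchi's problem (Theorem \ref{TeoremonC}) is only available for $p\ge 17$; for $p\in\{3,5,7,11,13\}$ your fallback is ``a coefficient-field-uniform version of Denef's original interpretation'', which is precisely the missing hard ingredient rather than a known tool --- Denef's interpretation runs through Pell equations, and making it uniform in the coefficient field is exactly what must be proved. Moreover, once that Pell machinery is available it covers every odd characteristic at once, so the B\"uchi detour becomes superfluous. Even restricted to $p\ge 17$, the core step --- interpreting $(\N;+,\cdot)$ in $F[z]$ over $\Lcal_T$ by representing $n$ as $u^{p^n}$ --- is not actually carried out: reading ``addition of exponents off composition of Frobenius powers'' requires a positive existential way of saying that two elements are images under the \emph{same} power of Frobenius, and the promised ``standard Diophantine argument'' for multiplication of exponents has no standard realization in $\Lcal_T$; the known arguments (e.g.\ the proof of Item \ref{2O} of Theorem \ref{Teoremon2}) transport the problem to $(\N;0,1,+,\mid_p)$ via the order predicate $\ord$, which is not in the language here.

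The paper's actual proof of Item \ref{2bCT} takes a different path that closes both gaps simultaneously. It starts from Item \ref{2bD}: multiplication on $\Z$ is uniformly defined in $\Dfrak_p=(\Z;0,1,+,\mid,\mid_p,\Z\smallsetminus\{-1,0,1\})$ (Theorem \ref{Teoremon1}, Item \ref{1D}), so it suffices to encode $\Dfrak_p$ into $F[z]$ over $\Lcal_T$. This is done by identifying $\Z\times\mu_2$ (hence $\Z$, after cutting out one component by the formula of Lemma \ref{papa}) with the group of solutions of the Pell equation $X^2-(\alpha^2-1)Y^2=1$ for an existentially quantified non-constant $\alpha$ (Section \ref{SecPell}, Lemmas \ref{grupo} and \ref{NasimNb}, and Proposition \ref{Quentin}): addition becomes the polynomially defined group law $\oplus$, the divisibility $m\mid n$ becomes divisibility of the $Y$-components, $\mid_p$ becomes the Frobenius relation on the $X$-components --- definable by the Pell-based formula $\Delta$ of Lemma \ref{manta}, with no appeal to B\"uchi's problem --- and $T$ becomes non-constancy of the $Y$-component (Theorem \ref{Katerina}). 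This works uniformly in the coefficient field and for every characteristic $p\ne 2$, which is what the statement requires. Your idea of an existentially quantified non-constant parameter bound by $T$ is indeed the right one, but it must be attached to the Pell equation, not to a base for iterated Frobenius.
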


Actually, the proof of Item \ref{2bCT} works (with only notational changes) in a similar way to prove that the pair $(\Fcal_{\Lcal_A},\Z)$ is uniformly encodable in $(\Fcal_{\Lcal_A},\Ccal)$, where $\Ccal$ is the class of all polynomial rings over a field of odd positive characteristic.

We obtain the following corollary from Theorem \ref{PPV} (choosing suitably the class $\Ccal$ of subclasses of $\Ucal$). 

\begin{corollary}\label{Teoremon3} 
Let $\Lcal$ and $\Ucal$ be such that the conclusion of Theorems \ref{Teoremon2} or \ref{Teoremon2b} hold and suppose that $\Ucal$ is infinite. There is no algorithm to decide whether or not a positive existential $\Lcal$-sentence is true for (for example):
\begin{enumerate}
\item some $\Ufrak$ in $\Ucal$ (this item does not require $\Ucal$ to be infinite),
\item all $\Ufrak$ in $\Ucal$,
\item infinitely many $\Ufrak$ in $\Ucal$,
\item all but possibly finitely many $\Ufrak$ in $\Ucal$,
\item each structure in a given subclass of $\Ucal$. 
\end{enumerate}
\end{corollary}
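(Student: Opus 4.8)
The plan is to derive Corollary~\ref{Teoremon3} directly from Theorem~\ref{PPV}, using the uniform encodings provided by Theorem~\ref{Teoremon2} or Theorem~\ref{Teoremon2b} together with the classical undecidability of Hilbert's tenth problem. First I would recall that by the Davis--Putnam--Robinson--Matiyasevich theorem there is no algorithm to decide whether an arbitrary positive existential $\Lcal_A$-sentence is true in $\N$ (respectively in $\Z$, for the integer version). Thus, setting $\Gcal=\fpe_{\Lcal_A}$ and $\Mfrak=\N$ (or $\Z$), the hypothesis of Theorem~\ref{PPV} that there is no algorithm to decide truth in $\Mfrak$ for formulas in $\Gcal$ is satisfied. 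By the hypothesis of the corollary, the conclusion of Theorem~\ref{Teoremon2} or of Theorem~\ref{Teoremon2b} holds for the given pair $(\Lcal,\Ucal)$, which says precisely that $(\Gcal,\Mfrak)$ is uniformly encodable in $(\fpe_{\Lcal},\Ucal)$, so the remaining hypothesis of Theorem~\ref{PPV} is met with $\Gcal'=\fpe_{\Lcal}$.

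Next, for each of the five items I would choose an appropriate non-empty collection $\Ccal$ of non-empty subclasses of $\Ucal$ and apply Theorem~\ref{PPV}, whose problem \textbf{(P)} then becomes exactly the decision problem in the item. For Item~1, take $\Ccal$ to be the collection of all singletons $\{\Ufrak\}$ with $\Ufrak\in\Ucal$: then ``there exists $\Vcal\in\Ccal$ such that every structure in $\Vcal$ satisfies $F$'' is literally ``there exists $\Ufrak\in\Ucal$ satisfying $F$'', and no infiniteness of $\Ucal$ is needed. For Item~2, take $\Ccal=\{\Ucal\}$, a single class, so \textbf{(P)} asks whether $F$ holds in all of $\Ucal$. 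For Item~3, take $\Ccal$ to be the collection of all \emph{infinite} subclasses of $\Ucal$ (non-empty because $\Ucal$ is infinite); then $F$ is true for infinitely many $\Ufrak\in\Ucal$ if and only if some infinite subclass consists entirely of structures satisfying $F$. For Item~4, take $\Ccal$ to be the collection of all cofinite subclasses of $\Ucal$ (again non-empty using that $\Ucal$ is infinite, and noting that cofinite subclasses are non-empty); then some cofinite $\Vcal$ has all its members satisfying $F$ exactly when $F$ holds for all but finitely many $\Ufrak\in\Ucal$. For Item~5, take $\Ccal$ to be the singleton $\{\Vcal_0\}$ for the prescribed subclass $\Vcal_0\subseteq\Ucal$. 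In every case Theorem~\ref{PPV} yields that problem \textbf{(P)} is undecidable, which is the asserted statement.

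A small technical point worth spelling out is that Theorem~\ref{PPV} is stated for a fixed collection $\Ccal$, so strictly speaking Item~5 requires the subclass to be given in advance; if one wants undecidability uniformly in the choice of subclass one simply invokes Theorem~\ref{PPV} with $\Ccal=\{\Vcal_0\}$ for each such $\Vcal_0$ separately, which is exactly what the phrasing ``each structure in a given subclass'' indicates. Likewise, for Items~3 and~4 one should observe that the relevant collections of subclasses are genuinely non-empty precisely because $\Ucal$ is assumed infinite --- this is where that hypothesis is used --- whereas Items~1, 2 and~5 go through for any non-empty $\Ucal$, and indeed the statement already flags that Item~1 does not need infiniteness.

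There is essentially no hard step here: the content is entirely in Theorem~\ref{PPV} and in the uniform encodings of Theorem~\ref{Teoremon2}/\ref{Teoremon2b}, and the corollary is the routine unwinding of \textbf{(P)} against five natural choices of $\Ccal$. The only place where one must be a little careful is matching the quantifier structure ``$\exists\Vcal\in\Ccal\ \forall\Ufrak\in\Vcal$'' of \textbf{(P)} to the informal statements ``all'', ``infinitely many'', ``all but finitely many'', etc.; once the collections $\Ccal$ are chosen as above, these matchings are immediate, and the undecidability conclusion transfers verbatim. Hence the main (indeed only) obstacle is bookkeeping, not mathematics.
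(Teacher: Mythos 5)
Your proposal is correct and follows exactly the paper's own argument: the paper likewise applies Theorem \ref{PPV} with the collection $\Ccal$ chosen, item by item, as the singletons of $\Ucal$, the class $\{\Ucal\}$, the infinite subclasses, the cofinite subclasses, and the given subclass. The only difference is that you spell out the bookkeeping (the Matiyasevich input and the equivalence of problem \textbf{(P)} with each informal statement) that the paper leaves implicit.
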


We finish this introduction with a short list of open problems.

\begin{enumerate}
\item Find a class of rational function fields in positive characteristic, with infinitely many distinct characteristics, where we can define uniformly the order over the language $\Lcal_z$, or show that there is no such class (make uniform the definition given in \cite{PheidasInv} or prove that it is not possible). 
\item Lower the bounds on the characteristic. 
\item Prove or disprove that there is a uniform definition of multiplication for the class of structures $\F_p$ over the language $\{0,1,+,P_2\}$, where $P_2(x)$ is interpreted in each $\F_p$ by ``$x$ is a square'' (it does not seem too difficult to prove the non-existence of a positive existential definition - see the proof of Proposition \ref{TimphuButhan} in Section \ref{ProofCognac}). 
\item Extend the result about $\ne$ in Lemma \ref{polne} to bigger classes of rings of algebraic functions. 
\end{enumerate}


\section{Examples of (non-)uniform definitions}\label{ProofCognac}

The proof of Lemma \ref{polne} is an easy adaptation of the proof of the analogous result over the integers (which we got from a talk by A. Shlapentokh). 

\begin{proof}[Proof of Lemma \ref{polne}]
Consider the following positive existential $\Lcal_z$-formula
$$
\varphi_{\ne}(t)\colon \exists x,u,v((zu-1)((z+1)v-1)=tx). 
$$
We prove that $\varphi_{\ne}(t)$ is satisfied in a polynomial ring $F[z]$, where $F$ is a field, if and only if $t$ is distinct from $0$. First note that it is clear that if the formula is satisfied then $t$ is not $0$ (since neither $z$ nor $z+1$ is invertible). 

Suppose that $t$ is non-zero. Since $F[z]$ is a unique factorization domain, we can write $t$ as $t_0t_1$ in such a way that $z$ does not divide $t_0$ and $z+1$ does not divide $t_1$. By B\'ezout's identity, there exist polynomials $u$, $x_u$, $v$ and $x_v$ such that $zu+t_0x_u=1$ and $(z+1)v+t_1x_v=1$. Therefore, we have
$$
(zu-1)((z+1)v-1)=t_0x_ut_1x_v=tx_ux_v,
$$
hence we can choose $x=x_ux_v$ for the formula to be satisfied. 
\end{proof}

\begin{proof}[Proof of Proposition \ref{TimphuButhan}]
Let $X$ be a non-empty finite set of prime numbers and let $q$ be its maximum. The quantifier free $\{0,1,+\}$-formula
$$
\bigvee_{i=0}^{q-1}x=i^2
$$
is satisfied in $\F_p$ if and only if $x$ is a square, for each $p$ in $X$. 

Suppose that $\varphi(x)$ is a positive existential $\{0,1,+\}$-formula that defines the relation ``$x$ is a square'' in $\F_p$ for all primes $p$ in an  infinite set $X$. The formula $\varphi(x)$ is logically equivalent to a formula of the form 
$$
\exists y_1\dots\exists y_n\bigvee_{i=1}^{r}L_i(x,y_1,\dots,y_n)
$$ 
where each $L_i$ is a formal system of linear equations. Hence, for each $p\in X$, the set of $x$ such that $\varphi(x)$ is true in $\F_p$, namely, the set of squares in $\F_p$, is the union of the projections on the variable $x$ of the zero locus $H_i^p$ of each $L_i$. Each $H_i^p$ is an affine linear subspace of $\F_p^{n+1}$ or the empty set. Since the projection $K_i^p$ of each $H_i^p$ on $x$ is an affine linear subspace of $\F_p$, it is either the whole of $\F_p$, a point or the empty set. From now on assume that $p$ is bigger than $2$. Since there are $\frac{p+1}{2}$ squares in $\F_p$, none of the $K_i^p$ can be the whole of $\F_p$ (as the union of the $K_i^p$ is the set of squares in $\F_p$). Hence the number $r$ of disjunctions in the formula $\varphi$ is at least $\frac{p+1}{2}$, which is absurd.
\end{proof}

The rest of this section is dedicated to the proof of Proposition \ref{Cognac}. If $A$ is a set of non-negative integers, then we define 
$$
A(n)=|A\cap\{1,2,\ldots,n\}|
$$ 
and
$$
\sigma(A)=\inf_{n>0}\frac{A(n)}{n}.
$$
The function $\sigma$ is known as the \emph{Shnirel'man density}. If $n\ge2$ and $A,A_1,\dots,A_n$ are sets of positive integers, we will write
$$
\sum^n_{i=1}A_i=\left\{\sum^n_{i=1}\alpha_i\colon \alpha_i\in A_i\right\}
$$
and $nA$ is the sum of $n$ copies of $A$. 

The two following fundamental results on Shnirel'man density can be found in  \cite[Chapter 11, Section 3]{Nathanson}.

\begin{lemma}\cite[Lemma 11.2]{Nathanson}\label{dens2} 
If $A$ and $B$ are sets of non-negative integers such that $0\in A\cap B$, $\sigma(A)>\frac{1}{2}$ and $\sigma(B)>\frac{1}{2}$ then $A+B$ is the set of non-negative integers.
\end{lemma}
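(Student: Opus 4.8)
The plan is to prove the lemma by a direct pigeonhole argument on the finite interval $\{0,1,\dots,n\}$. Fix an arbitrary non-negative integer $n$; the goal is to exhibit $a\in A$ and $b\in B$ with $n=a+b$. If $n=0$ this is immediate, since $0=0+0$ and $0\in A\cap B$ by hypothesis, so from now on assume $n\geq 1$.

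First I would record the density bounds. By the definition of Shnirel'man density, $A(n)\geq\sigma(A)\,n>n/2$ and likewise $B(n)>n/2$. Since $0\in A$, the set $A'=A\cap\{0,1,\dots,n\}$ has cardinality $|A'|=A(n)+1>n/2+1$. On the $B$-side I would reflect: put $B'=\{\,n-b : b\in B,\ 0\leq b\leq n\,\}$, which is a subset of $\{0,1,\dots,n\}$, and since $b\mapsto n-b$ is injective on $B\cap\{0,\dots,n\}$ and $0\in B$, it has cardinality $|B'|=B(n)+1>n/2+1$.

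Now I would count: $|A'|+|B'|>n+2>n+1=|\{0,1,\dots,n\}|$, so $A'$ and $B'$ cannot be disjoint. Choosing $c\in A'\cap B'$, we get $c\in A$ with $0\leq c\leq n$, and also $c=n-b$ for some $b\in B$ with $0\leq b\leq n$; hence $n=c+b\in A+B$. As $n$ was arbitrary, $A+B$ is the set of all non-negative integers.

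There is essentially no hard step here; the argument is a one-line pigeonhole once the setup is right. The only points requiring care are the two "$+1$" contributions to $|A'|$ and $|B'|$ — these come precisely from the hypothesis $0\in A\cap B$ and are exactly what pushes the cardinality sum past $n+1$ — and the role of the strict inequalities $\sigma(A),\sigma(B)>1/2$, which guarantee $A(n),B(n)>n/2$ without any parity bookkeeping. (One could also deduce the lemma from the Landau--Schnirelmann inequality $\sigma(A+B)\geq\sigma(A)+\sigma(B)-\sigma(A)\sigma(B)$, but the self-contained pigeonhole proof above is shorter for the precise form stated.)
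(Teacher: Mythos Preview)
Your proof is correct and is the standard pigeonhole argument for this result. Note that the paper does not actually prove this lemma: it is simply quoted from Nathanson's textbook \cite{Nathanson} and used as a black box, so there is no ``paper's own proof'' to compare against. Your argument is essentially the one found in Nathanson.
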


\begin{theorem}\cite[Theorem 11.2]{Nathanson}\label{dens1} If $A_1,\ldots,A_t$ are sets of non-negative integers containing $0$, then we have 
$$
1-\sigma\left(\sum^t_{i=1}A_i\right)\le\prod_{i=1}^t(1-\sigma(A_i)).
$$
\end{theorem}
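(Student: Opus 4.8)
The plan is to reduce to the two-set case and then induct on $t$. The two-set case is the statement that whenever $A$ and $B$ are sets of non-negative integers with $0\in A\cap B$ one has
$$
\sigma(A+B)\ge\sigma(A)+\sigma(B)-\sigma(A)\sigma(B),
$$
equivalently $1-\sigma(A+B)\le(1-\sigma(A))(1-\sigma(B))$. Granting this, Theorem \ref{dens1} follows by induction on $t$: for $t=1$ it is trivial, and for $t\ge2$ put $S=\sum_{i=1}^{t-1}A_i$; since each $A_i$ contains $0$ we have $0\in S$ and $0\in A_t$, so the two-set case applies to $S+A_t=\sum_{i=1}^tA_i$ and gives $1-\sigma(\sum_{i=1}^tA_i)\le(1-\sigma(S))(1-\sigma(A_t))$, which together with the inductive hypothesis $1-\sigma(S)\le\prod_{i=1}^{t-1}(1-\sigma(A_i))$ yields the claim.

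To prove the two-set inequality I would count by gaps. Fix $n\ge1$ and set $\alpha=\sigma(A)$, $\beta=\sigma(B)$, $k=A(n)$; list the elements of $A\cap\{0,1,\dots,n\}$ as $0=a_0<a_1<\dots<a_k\le n$. Each of $a_1,\dots,a_k$ lies in $A+B$ (add $0\in B$), contributing $k$ elements of $(A+B)\cap\{1,\dots,n\}$. In addition, for $0\le i\le k-1$ the numbers $a_i+b$ with $b\in B$ and $1\le b\le a_{i+1}-a_i-1$ fall strictly between $a_i$ and $a_{i+1}$, and the numbers $a_k+b$ with $b\in B$ and $1\le b\le n-a_k$ fall in $(a_k,n]$; all of these are new and pairwise distinct. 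Hence
$$
(A+B)(n)\ge k+\sum_{i=0}^{k-1}B(a_{i+1}-a_i-1)+B(n-a_k).
$$
Using $B(m)\ge\beta m$ for every integer $m\ge0$, together with the telescoping identity $\sum_{i=0}^{k-1}(a_{i+1}-a_i-1)+(n-a_k)=n-k$, this gives $(A+B)(n)\ge k+\beta(n-k)=\beta n+(1-\beta)k\ge\beta n+(1-\beta)\alpha n$, the last step because $k=A(n)\ge\alpha n$ and $1-\beta\ge0$. Dividing by $n$ and taking the infimum over $n>0$ yields $\sigma(A+B)\ge\beta+(1-\beta)\alpha=\alpha+\beta-\alpha\beta$, as desired.

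The only point requiring care is the bookkeeping in the displayed bound: one must make sure that the $k$ ``vertex'' elements $a_1,\dots,a_k$, the $k$ internal gaps, and the final segment $(a_k,n]$ are each accounted for exactly once, so that after applying $B(m)\ge\beta m$ the relevant lengths sum to precisely $n-k$; the degenerate cases ($k=0$, or a gap of length $0$, or $a_k=n$) should be checked to cause no trouble since $B(0)=0$. Everything else — the passage to the infimum in the definition of $\sigma$, and the induction on $t$ — is entirely routine.
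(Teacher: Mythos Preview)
Your proof is correct and is precisely the standard Shnirel'man argument that appears in the cited reference \cite[Theorem 11.2]{Nathanson}; the paper itself does not supply a proof but merely quotes this result from Nathanson, so there is nothing further to compare.
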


By a theorem of Linnik, given any set $B$ of non-negative integers with $\sigma(B)>0$,  the set $A=\{x^2\colon x\in B\}$ is a basis of finite order, that is, each positive integer is a (uniformly) bounded sum of elements in $A$. We want to show that the bound is the same for certain family of sets $B$.

\begin{theorem}\label{apendix}
Let $u\ge 2$ be an integer, and 
$$
C(u)=\{n\in\Z\colon u\nmid n\}.
$$ 
Each non-negative integer is the sum of at most $5940$ squares of elements in $C(u)$.
\end{theorem}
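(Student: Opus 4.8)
The plan is to follow the elementary argument (Cauchy--Schwarz together with a density increment, in the spirit of Linnik's proof that the squares of a set of positive Shnirel'man density form a basis of finite order), keeping track that every numerical constant depends only on the absolute lower bound $\tfrac12$ for the Shnirel'man density of $C(u)$, and not on $u$. Put $A_u=\{x^2\colon x\in C(u)\}\cup\{0\}$ and, for a set $S\ni0$, write $kS$ for its $k$-fold sumset. Since an empty sum is $0$ and $0\in A_u$, the statement is equivalent to the following: there is an absolute integer $k$ with $2k\le5940$ such that $\sigma(kA_u)>\tfrac12$ for all $u\ge2$; Lemma~\ref{dens2} applied to two copies of $kA_u$ then gives $(2k)A_u=\N$, i.e.\ every non-negative integer is a sum of at most $2k$ squares of elements of $C(u)$. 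The only arithmetic input on $u$ is the trivial count $|C(u)\cap\{1,\dots,N\}|=N-\lfloor N/u\rfloor\ge N/2$ for $u\ge2$, $N\ge1$ (and $1\in C(u)$).

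The heart of the proof is a uniform positive-density estimate: there are an absolute integer $h$ and an absolute $\delta_0>0$ with $\sigma(hA_u)\ge\delta_0$ for all $u\ge2$. I would prove it by a second-moment count. For $N\ge1$ consider the values $x_1^2+\dots+x_h^2\in(0,hN^2]$ with $x_i\in C(u)\cap\{1,\dots,N\}$. The number of ordered $h$-tuples is at least $(N/2)^h$, while the number of coincidences $\#\{(\mathbf x,\mathbf y)\in\{1,\dots,N\}^{2h}\colon x_1^2+\dots+x_h^2=y_1^2+\dots+y_h^2\}$ is $\ll_h N^{2h-2}$ with a constant independent of $u$ --- a standard estimate, valid with no logarithmic loss once $h\ge4$ (e.g.\ from Hua's inequality for squares). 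By Cauchy--Schwarz the number of distinct values reached is $\gg_h N^2$; as all of them lie in $(0,hN^2]$ and $\{0,1,\dots,h\}\subseteq hA_u$ (sums of at most $h$ ones fill the initial segment), this yields $\sigma(hA_u)\ge\delta_0$ with $\delta_0$ depending on $h$ only.

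To finish, apply Theorem~\ref{dens1} to $t$ copies of $hA_u$ (which contains $0$): $1-\sigma\big((th)A_u\big)\le(1-\delta_0)^t$, so taking $t$ to be the least integer with $(1-\delta_0)^t<\tfrac12$ gives $\sigma((th)A_u)>\tfrac12$; the reduction of the first paragraph then applies with $k=th$.

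\emph{The main obstacle is the final numerical bookkeeping.} The crude Cauchy--Schwarz estimate produces $\delta_0$ of size about $c_h^{-1}h^{-1}2^{-2h}$, hence $2th$ of the order of several thousand, so to land exactly at $5940$ one must optimise the choice of $h$, be economical with the constant in the second-moment bound, and treat the few small moduli $u$ separately (these are the expensive ones). For instance, for $u=2$ every non-negative integer is already a sum of at most ten odd squares: since $x^2=1+8\binom{(x+1)/2}{2}$ for odd $x$, a sum of $j$ odd squares equals $j$ plus eight times a sum of $j$ triangular numbers, so by Gauss's three-triangular-number theorem one may take $3\le j\le10$ in the appropriate residue class modulo $8$. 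Handling such small $u$ directly and running the uniform density argument for the remaining $u$ should give the stated bound $5940$.
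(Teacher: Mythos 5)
Your overall architecture coincides with the paper's: both arguments are Linnik-style, both use only the trivial count $|C(u)\cap\{1,\dots,N\}|\ge N/2$, and both finish identically, by establishing a uniform lower bound $\sigma(hA_u)\ge\delta_0$ for a bounded sumset of the squares, amplifying with Theorem~\ref{dens1} to get density $>\tfrac12$, and doubling with Lemma~\ref{dens2}. Where you genuinely diverge is in the key density estimate. The paper takes $h=6$ and compares the counting functions $R'(n)$ and $R(n)$ of lattice points in a $6$-dimensional ball directly, proving via the box-partition Lemmas~\ref{cubo} and~\ref{cubitos} that $R'(n)\ge\left(\frac{u-1}{u}\right)^6R(n)$, and then plays the volume lower bound for $R(n)$ against an upper bound for $R'(n)$ in terms of $A(n)$; this first-moment comparison is lossless enough to give $\sigma(6A_u)\ge0.0014$, hence $t=495$ and $2\cdot495\cdot6=5940$. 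You instead propose a second-moment/Cauchy--Schwarz count. That route does prove the qualitative statement (an absolute bound on the number of squares, uniformly in $u$), and it is arguably more standard.

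The gap is that your argument, as written, does not establish the constant $5940$, which is the actual content of the theorem. Cauchy--Schwarz squares the losses: with $h=6$ and the same explicit input the paper uses ($r(n)<40n^2$ for sums of six squares), the coincidence count is bounded by $\max_n r(n)\cdot\sum_{n\le 6N^2}r(n)\approx 40\cdot(6N^2)^2\cdot(2N+1)^6\approx 10^5N^{10}$, so the number of distinct values of $x_1^2+\cdots+x_6^2$ with $x_i\in C(u)\cap[1,N]$ that you extract is only about $(N/2)^{12}/(10^5N^{10})\sim 10^{-9}N^2$, giving $\delta_0\sim10^{-9}$, $t\sim\ln2/\delta_0\sim10^9$, and $2th\sim10^{10}$ --- not ``several thousand.'' Your own text concedes that the optimisation needed to reach $5940$ (sharpening the second-moment constant, choosing $h$, and handling small $u$ separately) is not carried out, and it is far from routine: no choice of $h$ makes the Cauchy--Schwarz route competitive with the paper's first-moment comparison without importing much sharper explicit representation-number bounds. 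So either you must replace the second-moment step by a direct comparison of $R'(n)$ with $R(n)$ (as the paper does), or you must accept proving the theorem with an unspecified absolute constant in place of $5940$.
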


Using this result (proven below), Proposition \ref{Cognac} follows easily:

\begin{proof}[Proof of Proposition \ref{Cognac}] By Theorem \ref{apendix}, the following positive existential $\{0,+,R_2\}$-formula uniformly defines the relation $x\ge 0$ in the class of structures $\Cfrak_r$ (which is enough to prove the result):
$$
\phi(x): \exists x_1,\ldots,x_{5940} \bigwedge_{i=1}^{5940}(R_2(x_i)\vee x_i=0)\wedge x=\sum_{i=1}^{5940}x_i
$$
where we recall that $R_2(x)$ is interpreted as ``$x$ is a square and $r$ does not divide $x$'' in $\Cfrak_r$. 
\end{proof}

We need two lemmas before we can prove Theorem \ref{apendix}.
\begin{lemma}\label{cubo}
Let $d,k$ be positive integers, let 
$$
A_i=\{z_i+1,\ldots,z_i+k\}
$$ 
for $1\le i\le d$ be sets of $k$ consecutive integers and let 
$$
B_i=\{z_i+1,\ldots,z_i+k-1\}
$$ 
(take $B_i$ empty if $k=1$). Suppose that we have a set $U\subseteq \R^d$ satisfying the following:
\begin{itemize}
\item[(1)] $U$ is non-empty,
\item[(2)] $U$ is convex,
\item[(3)] $\pi_i(U)= \pi_i(H_{z_i+1}^i\cap U)$ for $1\le i\le d$, where $\pi_i:\R^d\to\R^{d-1}$ deletes the $i$-th coordinate and $H_x^{i}\subseteq \R^d$ is the hyperplane $x_i=x$.
\end{itemize}
Then we have
$$
(k-1)^d|U\cap\prod_{i=1}^d A_i|\le k^d|U\cap\prod_{i=1}^d B_i|.
$$ 
\end{lemma}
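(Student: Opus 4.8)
The plan is to prove the inequality by induction on $d$, using a Fubini-type slicing that reduces the $d$-dimensional count to a weighted family of one-dimensional counts. First I would set up the base case $d=1$: here $U\subseteq\R$ is a nonempty convex set (an interval), condition (3) says that $\pi_1(U)=\pi_1(H^1_{z_1+1}\cap U)$, and since $\pi_1$ maps to $\R^0=\{*\}$ this just says $z_1+1\in U$, i.e. the left endpoint of $A_1$ lies in the interval $U$. Consequently $U\cap A_1$ is a set of consecutive integers $\{z_1+1,\dots,z_1+m\}$ for some $m$ with $0\le m\le k$, and $U\cap B_1=\{z_1+1,\dots,z_1+\min(m,k-1)\}$. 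One then checks directly that $(k-1)m\le k\min(m,k-1)$ for all $0\le m\le k$, which is elementary (split into $m\le k-1$ and $m=k$).

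For the inductive step, I would slice $\R^d$ along the last coordinate $x_d$. For each integer value $x_d=z_d+j$ with $1\le j\le k$, let $U_j=\{(x_1,\dots,x_{d-1})\colon (x_1,\dots,x_{d-1},z_d+j)\in U\}\subseteq\R^{d-1}$. Then $U_j$ is convex, and $|U\cap\prod_{i=1}^d A_i|=\sum_{j=1}^k |U_j\cap\prod_{i=1}^{d-1}A_i|$, with the analogous identity for the $B_i$ (the sum running to $k-1$). The key point is that condition (3) for the index $d$ forces the set of $j$ with $U_j\ne\varnothing$ to be an initial segment $\{1,\dots,\ell\}$ of $\{1,\dots,k\}$: indeed $\pi_d(U)=\pi_d(H^d_{z_d+1}\cap U)$ says every point of $U$ shares its first $d-1$ coordinates with a point of $U$ whose $d$-th coordinate is $z_d+1$, so $U_1$ already contains all the relevant lattice points, and convexity of $U$ in the $x_d$-direction gives that nonemptiness of $U_j$ propagates downward. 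I would also need that each nonempty slice $U_j$ (for $1\le j\le \ell$) still satisfies hypotheses (1)–(3) in dimension $d-1$ for the sets $A_i,B_i$, $1\le i\le d-1$: conditions (1),(2) are immediate, and (3) for $U_j$ follows from (3) for $U$ together with the fact that the witnessing point for condition (3) on index $i<d$ can be taken in the same $x_d$-slice (again using convexity / that the hyperplane $H^i_{z_i+1}$ is transverse to the $x_d$-direction). Granting this, apply the induction hypothesis to each nonempty slice to get $(k-1)^{d-1}|U_j\cap\prod_{i<d}A_i|\le k^{d-1}|U_j\cap\prod_{i<d}B_i|$, sum over $j=1,\dots,\ell$, and finally bound $(k-1)\sum_{j=1}^{\ell}(\cdot)\le k\sum_{j=1}^{\min(\ell,k-1)}(\cdot)$ exactly as in the one-dimensional case, using that the slices are nested-in-content along $j$ only enough that the count of nonempty $B$-slices is $\min(\ell,k-1)$; more carefully, one uses the elementary fact that for the decreasing-support structure the outer sum obeys the same $d=1$ estimate applied coordinate-wise.

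The step I expect to be the main obstacle is verifying that hypothesis (3) is inherited by the slices $U_j$ — i.e. that projecting away coordinate $i<d$ after restricting to $x_d=z_d+j$ still has the property that the hyperplane $x_i=z_i+1$ is "full". This is where the precise geometric meaning of (3) matters: it is not merely a statement about the lattice points of $U$ but about $U$ as a convex body, and one must be careful that the witness in $U$ realizing $\pi_i$ can be chosen with $x_d=z_d+j$. The cleanest way is probably to reformulate (3) as: for every $x\in U$ there is $x'\in U$ with $x'_i=z_i+1$ and $x'_m=x_m$ for all $m\ne i$ — that is, $U$ contains the segment from $x$ to such an $x'$ by convexity — and then observe this is a statement stable under intersecting $U$ with the affine hyperplane $\{x_d=z_d+j\}$ whenever $i\ne d$. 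If that reformulation is granted, the rest is bookkeeping with the one-variable inequality $(k-1)m\le k\min(m,k-1)$.
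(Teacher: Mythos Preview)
Your approach is essentially the paper's: induction on $d$, slicing along the last coordinate, applying the inductive hypothesis to each slice, and then handling the contribution of the top slice $j=k$ separately. Your treatment of the base case and your verification that hypothesis~(3) passes to the slices $U_j$ are correct (and in fact more explicit than the paper, which simply asserts the inheritance as ``easy to see'').

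The one place your writeup has a genuine gap is the final combination. After applying induction you have $(k-1)^{d-1}a_j\le k^{d-1}b_j$, where $a_j=|U_j\cap\prod_{i<d}A_i|$ and $b_j=|U_j\cap\prod_{i<d}B_i|$; summing over $j$ gives $(k-1)^{d-1}\sum_{j=1}^k a_j\le k^{d-1}\sum_{j=1}^k b_j$, and to reach the target you still need
\[
(k-1)\sum_{j=1}^{k} b_j \;\le\; k\sum_{j=1}^{k-1} b_j,
\qquad\text{i.e.}\qquad (k-1)\,b_k \;\le\; \sum_{j=1}^{k-1} b_j.
\]
Knowing merely \emph{which} slices are nonempty is not enough for this --- the one-dimensional inequality $(k-1)m\le k\min(m,k-1)$ relies on each slice contributing exactly~$1$, which is no longer the case. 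What you actually need is the content nesting $U_k\subseteq U_j$ for every $j\le k$ (whence $b_k\le b_j$), and this follows from hypothesis~(3) for the $d$-th coordinate together with convexity, via exactly the argument you used to get the initial-segment property. So your phrase ``nested-in-content'' is the correct justification; the subsequent weakening to ``only enough that the count of nonempty $B$-slices is $\min(\ell,k-1)$'' is not.

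The paper packages this same step slightly differently: from the nesting it deduces $\pi_d\bigl(H^d_{z_d+k}\cap\prod_i A_i\cap U\bigr)\times A_d\subseteq\prod_i A_i\cap U$, hence $a_k\le\frac{1}{k}\sum_{j=1}^k a_j$, and then rearranges. Both routes rest on the same geometric fact.
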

\begin{proof}
We fix $k\ge 1$. Up to translation by the vector $(z_1,\dots,z_d)$ we can assume $z_i=0$ for each $i$, hence 
$$
A_i=A=\{1,\ldots,k\}
$$ 
and 
$$
B_i=B=\{1,\ldots,k-1\}
$$ 
for each $i$. Define 
$$
1_d=(1,\ldots,1)
$$ 
and observe that $1_d$ belongs to $U$ (otherwise $U$ would be empty by the hypothesis (3)). Let $a_d\ge 1$ be a real number such that 
$$
(1,\ldots,1,a_d)\in U
$$ 
(this is possible because $1_d\in U$) and such that, if 
$$
(1,\ldots,1,l)\in U
$$ 
for $l\in A$ then $a_d\ge l$ (this can be done because $U$ is convex).

The proof goes by induction on $d$. Observe that for $d=1$ the set $U$ is just an interval containing $1$, thus the desired inequality clearly holds.
Assume that the result is true for $d=n-1\ge 1$ and consider a set $U\subseteq \R^n$ satisfying the hypotheses. For the rest of the proof, the set $H_x^n$ will be considered inside $\R^n$. It is easy to see that, for any $x\in[1,a_n]$ the set 
$$
U_x=\pi_n(U\cap H_x^n)\subseteq\R^{n-1}
$$ 
satisfies the hypotheses of the Lemma with $d=n-1$ and $z=0$, hence
$$
(k-1)^{n-1}|A^{n-1}\cap U_x|\le k^{n-1}|B^{n-1}\cap U_x|.
$$
For $x\in A$ we have
$$
\pi_n(H_x^n\cap A^n\cap U)=A^{n-1}\cap U_x
$$ 
hence
$$
|H_x^n\cap A^n\cap U|=|A^{n-1}\cap U_x|
$$ 
and similarly for $x\in B$ we have
$$
|H_x^n\cap B^n\cap U|=|B^{n-1}\cap U_x|.
$$ 
In particular, for $x\in B$ (hence also $x\in A$), we have
\begin{eqnarray}\label{cubos1}
(k-1)^{n-1}|H_x^n\cap A^n\cap U|\le k^{n-1}|H_x^n\cap B^n\cap U|.
\end{eqnarray}
The hypothesis (2) and (3) on $U$ implies 
$$
\pi_n\left(H_k^n\cap A^{n}\cap U\right)\times A\subseteq  A^n \cap U
$$ 
which gives us
\begin{eqnarray}\label{cubos2}
|H_k^n\cap A^{n}\cap U|\le \frac{1}{k}|A^n \cap U|.
\end{eqnarray}
Using the Inequalities \eqref{cubos1} and \eqref{cubos2} we obtain:
$$
\begin{aligned}
(k-1)^{n}|A^{n}\cap U|&=(k-1)\sum_{x\in A}(k-1)^{n-1}|H_x^n\cap A^n \cap U|\\
&=(k-1)^{n}|H_k^n\cap A^{n}\cap U|+\\
&\hspace{16pt}(k-1)\sum_{x\in B}(k-1)^{n-1}|H_x^n\cap A^n \cap U|\\
&\le(k-1)^{n}\frac{1}{k}|A^n \cap U|+ (k-1)\sum_{x\in B}k^{n-1}|H_x^n\cap B^{n}\cap U|\\
&=(k-1)^{n}\frac{1}{k}|A^n \cap U|+(k-1)k^{n-1}|B^n\cap U|
\end{aligned}
$$
hence
$$
(k(k-1)^n-(k-1)^n)|A^{n}\cap U|\le (k-1)k^n |B^n\cap U|
$$
and we obtain finally
$$
(k-1)^{n}|A^{n}\cap U|\le k^n|B^n\cap U|.
$$
\end{proof}

Let $d$ and $k$ be positive integers, and $r$ a positive real number. We let
$$
\begin{aligned}
L_{d}(r)&=\{v=(v_1,\ldots,v_d)\in\Z^d\colon \|v\|_2\le r,v_i>0\mbox{ for }1\le i\le d\}\\
L_{d,k}(r)&=\{v=(v_1,\ldots,v_d)\in\Z^d\colon \|v\|_2\le r,v_i>0, k\nmid v_i\mbox{ for }1\le i\le d\}.
\end{aligned}
$$
\begin{lemma}\label{cubitos}
We have 
$$
k^d|L_{d,k}(r)|\ge (k-1)^d|L_{d}(r)|.
$$
\end{lemma}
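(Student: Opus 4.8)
The plan is to reduce the inequality $k^d|L_{d,k}(r)|\ge (k-1)^d|L_d(r)|$ to an application of Lemma \ref{cubo} with $U$ the (open) positive-orthant ball $\{v\in\R^d\colon \|v\|_2\le r,\ v_i>0\}$ intersected suitably. The point is that $L_d(r)$ counts the integer points of $U$, while $L_{d,k}(r)$ counts those integer points whose coordinates avoid multiples of $k$; so I want to compare the count of lattice points in a convex body against the count of lattice points avoiding one residue class in each coordinate, and Lemma \ref{cubo} is exactly the tool for such a comparison once the body is sliced into boxes of $k$ consecutive integers in each coordinate.

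The key steps, in order: First, I would tile $\Z^d_{>0}$ by the cubes $\prod_{i=1}^d A_i$ where $A_i=\{z_i+1,\dots,z_i+k\}$ ranges over all translates with $z_i$ a nonnegative multiple of $k$ (so the cubes partition the positive lattice points). For each such cube, set $B_i=\{z_i+1,\dots,z_i+k-1\}$, the sub-box obtained by deleting the last coordinate value $z_i+k$, i.e.\ the representatives not divisible by $k$ — note that removing $z_i+k$ from $A_i$ removes precisely the unique multiple of $k$ in that block. Second, I would check that for the convex body $U=\{v\in\R^d\colon \|v\|_2\le r,\ v_i>0\}$, hypothesis (3) of Lemma \ref{cubo} holds for \emph{every} translate: $\pi_i(U)=\pi_i(H^i_{x}\cap U)$ for the smallest admissible value $x$ of the $i$-th coordinate in the block, because $U$ is ``downward closed'' toward the origin in each positive coordinate direction — if a point of $U$ has $i$-th coordinate $\ge x$, then decreasing it to $x$ keeps the Euclidean norm no larger and the coordinate positive, so the sliced point still lies in $U$. (One should also handle the degenerate cubes where $U\cap\prod A_i$ is empty, for which the inequality is trivial; convexity is inherited from $U$, and non-emptiness is the case that matters.) Third, apply Lemma \ref{cubo} to each cube that meets $U$ to get $(k-1)^d|U\cap\prod A_i|\le k^d|U\cap\prod B_i|$, then sum over all the cubes: the left side sums to $(k-1)^d|L_d(r)|$ and the right side sums to $k^d|L_{d,k}(r)|$, since the $\prod B_i$ over all blocks are exactly the positive lattice points with no coordinate divisible by $k$.

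The main obstacle I anticipate is verifying hypothesis (3) of Lemma \ref{cubo} uniformly over all the translated cubes, and in particular matching up conventions: Lemma \ref{cubo} is stated with $H^i_{z_i+1}$, i.e.\ the slice at the \emph{lowest} coordinate value of the block, so one must be careful that ``pushing a point down to the bottom of its block in coordinate $i$'' indeed lands inside $U$ — this is where the geometry of the ball (monotonicity of $\|\cdot\|_2$ under decreasing a positive coordinate) is used, and it is genuinely false for arbitrary convex $U$, so the argument really needs $U$ to be this particular body. A secondary, purely bookkeeping, point is to confirm that the blocks $\{z_i+1,\dots,z_i+k\}$ with $z_i\in k\Z_{\ge 0}$ really do partition $\Z_{>0}$ and that deleting $z_i+k$ isolates the multiples of $k$; this is immediate but should be stated cleanly so the summation identities at the end are transparent.
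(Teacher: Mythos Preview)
Your approach is essentially the paper's: partition $L_d(r)$ and $L_{d,k}(r)$ into pieces indexed by $z=(z_1,\dots,z_d)\in(k\Z_{\ge0})^d$, apply Lemma~\ref{cubo} to each piece, and sum.

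There is, however, a genuine gap in your verification of hypothesis~(3). You apply Lemma~\ref{cubo} with the single body $U=\{v:\|v\|_2\le r,\ v_i>0\}$ for \emph{every} translated cube, but hypothesis~(3) demands $\pi_i(U)=\pi_i(H^i_{z_i+1}\cap U)$ for the whole of $U$, not merely for the part of $U$ with $i$-th coordinate at least $z_i+1$. Your downward-closure argument only establishes $\pi_i(\{v\in U:v_i\ge z_i+1\})\subseteq\pi_i(H^i_{z_i+1}\cap U)$; it says nothing about points $v\in U$ with $0<v_i<z_i+1$, and those points can project outside $\pi_i(H^i_{z_i+1}\cap U)$. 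Concretely, with $d=2$, $r=5$, $z_1=2$: the point $(0.1,4.9)$ lies in your $U$, so $4.9\in\pi_1(U)$, but $(3,4.9)$ has norm larger than $5$, so $4.9\notin\pi_1(H^1_3\cap U)$. Hence hypothesis~(3) fails for the global $U$ as soon as some $z_i>0$, and you even anticipated this was the delicate point.

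The paper handles this exactly as your intuition suggests, but by modifying the body rather than reasoning pointwise: for each $z$ it replaces $U$ by $U_z=D(0,r)\cap\{x_i\ge z_i\text{ for all }i\}$, observes that $\left|U\cap\prod A_i\right|=\left|U_z\cap\prod A_i\right|$ and likewise for $B_i$ (since lattice points in $\prod A_i$ already have each coordinate $\ge z_i+1>z_i$), and applies Lemma~\ref{cubo} to $U_z$. With this restriction the downward-closure argument you gave becomes adequate, because every point of $U_z$ has $i$-th coordinate at least $z_i$. So the missing step is simply to intersect with the half-spaces $\{x_i\ge z_i\}$ cube by cube before invoking the lemma; everything else in your plan is correct.
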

\begin{proof}
Let $U=D(0,r)$ be the $d$-dimensional closed euclidean ball of radius $r$. Take integers $z_i\ge 0$ congruent to $0$ modulo $k$ for $1\le i\le d$ such that 
$$
(z_1+1,\ldots,z_d+1)\in U,
$$ 
(if this is not possible - for $r$ being too small - then the conclusion follows). Write $z=(z_1,\ldots,z_d)$ and define the sets $A_i=A_i(z)$ and $B_i=B_i(z)$ as in Lemma \ref{cubo}. It is clear that, as $z$ ranges over all the possible choices then the sets 
$$
U\cap \prod_{i=1}^d A_i(z)
$$ 
form a partition of $L_{d}(r)$ and the sets 
$$
U\cap \prod_{i=1}^d B_i(z)
$$ 
form a partition of $L_{d,k}(r)$. For each fixed $z$, let 
$$
U_z=U\cap \{(x_1,\dots,x_d)\colon x_i\ge z_i \textrm{ for each } i\}.
$$
Note that 
$$
\left|U\cap \prod_{i=1}^d A_i\right|=\left|U_z\cap \prod_{i=1}^d A_i\right|\quad\textrm{and}\quad 
\left|U\cap \prod_{i=1}^d B_i\right|=\left|U_z\cap \prod_{i=1}^d B_i\right|
$$
and note also that the hypothesis in Lemma \ref{cubo} are satisfied for $U_z$, $A_i(z)$ and $B_i(z)$. The result follows.
\end{proof}

\begin{proof}[Proof of Theorem \ref{apendix}]
Let $r(n)$ be the number of ordered $6$-tuples of integers $(x_1,\ldots,x_6)$ such that 
$$
n=\sum x_i^2,
$$ 
and let $r'(n)$ be the number of ordered $6$-tuples of integers having their non-zero coordinates in $C(u)$ and satisfying the same condition. Write
$$
R(n)=\sum_{k=0}^n r(k)\quad\textrm{and}\quad R'(n)=\sum_{k=0}^n r'(k).
$$ 

Observe that $R(n)$ is the number of integer points in the $6$-dimensional closed euclidean ball $B(0,\sqrt{n})$ of radius $\sqrt{n}$. 

If $z\in\R^6$ define the \textit{box centered at} $z$ as the closed ball of radius $1/2$ in the $\infty$-norm centered at $z$ and write it $B_z$. Observe that, if $V$ is a set of $N$ integer points in $\R^6$, then
$$
N=\vol\left(\bigcup_{z\in V}B_z\right).
$$
Given $n>0$ define 
$$
I(n)=\Z^6\cap B(0,\sqrt{n})
$$ 
and 
$$
I'(n)=\{v\in I_n\colon u \mbox{ does not divide the nonzero coordinates of } v\}
$$
hence we have
$$
R(n)=|I(n)|=\vol\left(\bigcup_{z\in I(n)}B_z\right)
$$
and
$$
R'(n)=|I'(n)|=\vol\left(\bigcup_{z\in I'(n)}B_z\right)
$$
Moreover, decomposing $I(n)$ and $I'(n)$ in lower dimensional parts we have
$$
R(n)=1+\sum_{d=1}^{6}2^d\binom{6}{d}|L_{d}(\sqrt{n})|
$$
and
$$
R'(n)=1+\sum_{d=1}^{6}2^d\binom{6}{d}|L_{d,u}(\sqrt{n})|, 
$$
where $\binom{6}{d}$ counts the number of non-zero components and $2^d$ the distribution of the signs. Hence, by Lemma \ref{cubitos} we get
$$
\begin{aligned}
R'(n)&=1+\sum_{d=1}^{6}2^d\binom{6}{d}|L_{d,u}(\sqrt{n})|\\
&\ge 1+\sum_{d=1}^{6}\left(\frac{u-1}{u}\right)^d 2^d\binom{6}{d}|L_{d}(\sqrt{n})|\\
&\ge \left(\frac{u-1}{u}\right)^6 R(n).
\end{aligned}
$$

We have
\begin{equation}\label{madre}
B\left(0,\sqrt{n}-\frac{\sqrt{6}}{2}\right)\subseteq \bigcup_{z\in I(n)}B_z
\end{equation}
since if 
$$
||v||_2\le \sqrt{n}-\frac{\sqrt{6}}{2}
$$ 
then the nearest lattice point to $v$ is at a distance at most $\sqrt{6}/2$. Therefore, we have
$$
R(n)\ge\vol\left(B\left(0,\sqrt{n}-\frac{\sqrt{6}}{2}\right)\right)=\frac{\pi^3}{6}\left(\sqrt{n}-\frac{\sqrt{6}}{2}\right)^6
$$
which gives a lower bound that will allow us to conclude.
$$
R'(n)\ge\left(\frac{u-1}{u}\right)^6\frac{\pi^3}{6}\left(\sqrt{n}-\frac{\sqrt{6}}{2}\right)^6.
$$

Let us now look for an upper bound. Given $n\ge 1$, let $m(n)$ be the number of integers $k$ in $\{0,1,\ldots,n\}$ satisfying $r'(k)=0$ and write $X(n)$ for the set of these integers. Note that 
\begin{itemize}
\item $r'(0)=1\ne 0$,
\item for $n>0$ we have $r(n)<40n^2$ (see \cite[Theorem 14.6]{Nathanson}) and 
\item $r'(n)\le r(n)$. 
\end{itemize}
Therefore, we have the following upper bound for $n\ge 1$:
$$
\begin{aligned}
R'(n)&\le 1+ \sum_{\substack{1\le k\le n\\ k\notin X(n)}}r(k)\\
&< 1+40\sum_{\substack{1\le k\le n\\ k\notin X(n)}}k^2\\
&\le 1+40\sum_{k=m(n)+1}^n k^2\\
&=1+40\left(\frac{2n^3+3n^2+n}{6}-\frac{2m(n)^3+3m(n)^2+m(n)}{6}\right)
\end{aligned}
$$

Set 
$$
S=\{x^2\colon x\in C(u)\}\cup\{0\}
$$ 
and $A=6S$. Since $m(n)=n-A(n)$, we use for $n\ge 1$ the upper and lower bounds obtained for $R'(n)$ to get
$$
\begin{aligned}
40&\left(\frac{2n^3+3n^2+n}{6}-\frac{2(n-A(n))^3+3(n-A(n))^2+n-A(n)}{6}\right) +1>\\ 
&\hspace{80pt}\left(\frac{u-1}{u}\right)^6\frac{\pi^3}{6}(\sqrt{n}-\sqrt{6}/2)^6. 
\end{aligned}
$$
Working out the left hand side one obtains
$$
\begin{aligned}
&\frac{40}{3}A(n)^3- 20(2n+1) A(n)^2+\frac{20(6n^2+6n+1)}{3}A(n)+1>\\
 &\hspace{80pt}\left(\frac{u-1}{u}\right)^6\frac{\pi^3}{6}(\sqrt{n}-\sqrt{6}/2)^6. 
\end{aligned}
$$
Let $\sigma_n$ be such that $A(n)=\sigma_nn$. Note that $0<\sigma_n\le 1$ (recall that $n\ge 1$ and $1\in A$). Since $u\ge 2$ we have 
$$
\left(\frac{u-1}{u}\right)^6\frac{\pi^3}{6}>0.08,
$$ 
hence for $n>3$ 
\begin{equation}
\begin{aligned}\label{desigualdadcubos}
&\frac{40}{3}\sigma_n(\sigma_n^2-3\sigma_n+3)n^3+20\sigma_n(2-\sigma_n)n^2+\frac{20\sigma_n}{3}n+1>\\
&\hspace{80pt}0.08\left(\sqrt{n}-\frac{\sqrt{6}}{2}\right)^6. 
\end{aligned}
\end{equation}

If for some $n\ge 500$ we have $\sigma_n\le 0.0014$ then the above inequality and elementary calculus gives a contradiction. Hence $\sigma_n>0.0014$ for each $n\ge 500$. On the other hand, as $1\in A$ we have 
$$
\sigma_n\ge \frac{1}{499}>0.0014
$$ 
for $n=1,2,\ldots,499$. Note that these bounds are far from being optimal, but they are enough for our purposes.

This proves that $\sigma_n> 0.0014$ for each $n\ge 1$. Therefore we have $\sigma(A)\ge 0.0014$ and Theorem \ref{dens1} implies 
$$
\sigma(495 A)\ge 1-(1-0.0014)^{495}>0.5.
$$ 
By Lemma \ref{dens2}, 
$$
5940S=990A=2(495A)
$$ 
is the set of non-negative integers.
\end{proof}

\section{Uniform encodings}\label{UnifEncod}

\subsection{Proof of Theorem \ref{PPV} and Corollary \ref{Teoremon3}}\label{PPVMachine}

\begin{proof}[Proof of Theorem \ref{PPV}]
Suppose that under the hypothesis of the theorem there exists an algorithm $\Acal$ to solve Problem (P), and let $\Bcal$ be the algorithm that uniformly encodes $(\Gcal,\Mfrak)$ in $(\Gcal',\Ucal)$. Let us show that the algorithm obtained by first applying $\Bcal$ and then $\Acal$ decides whether or not a formula in $\Gcal$ is satisfied by $\Mfrak$ (which is absurd). Let $F$ be a formula in $\Gcal$ and apply $\Acal$ to the output $G$ of $F$ after applying $\Bcal$. 
\begin{itemize}
\item if the answer is YES then there exists a non-empty class $\Vcal$ in the collection $\Ccal$ such that every structure $\Ufrak$ in $\Vcal$ satisfies $G$. In particular, there exists at least one structure in $\Ucal$ satisfying $G$. Therefore, $\Mfrak$ satisfies $F$ (by definition of uniform encodability).
\item if the answer is NO then for each class $\Vcal$ in the non-empty collection $\Ccal$, there exists at least one structure $\Ufrak$ in $\Vcal$ not satisfying $G$. In particular, there exists at least one structure in $\Ucal$ not satisfying $G$. Therefore, $\Mfrak$ does not satisfy $F$ (by definition of uniform encodability).
\end{itemize}
\end{proof}

\begin{proof}[Proof of Corollary \ref{Teoremon3}]
We list by item the collection $\Ccal$ needed to apply Theorem \ref{PPV}. The collection $\Ccal$ consists respectively of:
\begin{enumerate}
\item all classes containing exactly one structure in $\Ucal$;
\item the class $\Ucal$
\item all infinite subclasses of $\Ucal$,
\item all cofinite subclasses of $\Ucal$,
\item the given subclass of $\Ucal$. 
\end{enumerate}
\end{proof}

\subsection{Techniques for uniform encodings}

Following Cori and Lascar \cite{CoriLascar} we recall the following notation and definitions. 

\begin{notation}
\begin{enumerate}
\item If $\Ufrak$ is an $\Lcal$-structure, then for each symbol $\alpha$ of $\Lcal$, we will write $\alpha^\Ufrak$ for the interpretation of $\alpha$ in $\Ufrak$. 
\item Let $f\colon\Ufrak\rightarrow\Wfrak$ be a morphism of $\Lcal$-structures. We will say that $f$ is an \emph{$\Lcal$-monomorphism} if for each relation symbol $R$ we have: for all $x_1,\dots,x_n\in\Ufrak$
$$
R^{\Ufrak}(x_1,\dots,x_n) \textrm{ holds if and only if } R^{\Wfrak}(f(x_1),\dots,f(x_n)) \textrm{ holds}.
$$
\item An $\Lcal$-isomorphism is an $\Lcal$-monomorphism which is onto. 
\end{enumerate}
\end{notation}

Note that sometimes $\Lcal$-monomorphisms are called $\Lcal$-embeddings.

\begin{definition}\label{obvio}
Let $\Ufrak$ be an $\Lcal$-structure and $\Lcal'$ be a language. Suppose that there exists a bijection $f\colon\Lcal\rightarrow\Lcal'$ which sends symbols of constants to symbols of constants, and for each natural number $n\ge1$, symbols of $n$-ary relations to symbols of $n$-ary relations, and  symbols of $n$-ary functions to symbols of $n$-ary functions. Let $\Ufrak'$ be the $\Lcal'$-structure with same base set as $\Ufrak$ and where each symbol $f(\alpha)$ from $\Lcal'$ is interpreted by $\alpha^\Ufrak$. Given $\Ufrak$ and $f\colon\Lcal\rightarrow\Lcal'$ as above, we will refer to $\Ufrak'$ as to the $(\Ufrak,f)$-induced $\Lcal'$-structure. Moreover, in this context, we will denote by $\Acal_{\Lcal}^{\Lcal'}$ the algorithm that transforms a formula over $\Lcal$ into a formula over $\Lcal'$ (simply using the bijection $f$). Note that for every formula $F$ over $\Lcal$, we have: $\Ufrak$ satisfies $F$ if and only if $\Ufrak'$ satisfies $\Acal_{\Lcal}^{\Lcal'}(F)$. 
\end{definition}

\begin{proposition}\label{Atransform}
Let $\alpha\in\Lcal$ be uniformly $\Lcal\smallsetminus X$-definable in a class $\Ucal$ of $\Lcal$-structures. There exists an algorithm $\Acal_X^\alpha$ that, given an $\Lcal$-sentence $F$, returns an $\Lcal\smallsetminus X$-sentence $\Acal_X^\alpha(F)$ such that $\Ufrak$ satisfies $F$ if and only if $\Ufrak_X$ satisfies $\Acal_X^\alpha(F)$ for all structures $\Ufrak\in\Ucal$. Moreover, if $\alpha$ is (respectively, positive) existentially definable and $F$ is (respectively, positive) existential then $\Acal_X^\alpha(F)$ is (respectively, positive) existential. 
\end{proposition}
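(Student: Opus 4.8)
The plan is to realize $\Acal_X^\alpha$ as the classical ``eliminate a definable symbol'' transformation, and then to check correctness by induction on formulas while keeping track of the quantifier shape. First I would fix an $\Lcal\smallsetminus X$-formula $\theta$ that uniformly defines the interpretation of $\alpha$ in every member of $\Ucal$, chosen (positive) existential whenever $\alpha$ is (positive) existentially $\Lcal\smallsetminus X$-definable. The algorithm then rewrites an input $\Lcal$-sentence $F$ into an $\Lcal\smallsetminus X$-sentence by eliminating $\alpha$ (and, if $X$ has further symbols, those too by the same device), the rewriting rule depending on the syntactic type of $\alpha$: (i) if $\alpha$ is an $n$-ary relation symbol, normalize $F$ so that every atomic subformula built from $\alpha$ has the shape $\alpha(v_1,\dots,v_n)$ with distinct fresh variables $v_i$ --- prefixing $\exists v_1\cdots\exists v_n$ and conjoining the equalities $v_i=t_i$ to absorb the original arguments $t_i$ --- and then replace each such atom by $\theta(v_1,\dots,v_n)$; (ii) if $\alpha$ is an $n$-ary function symbol, work from the innermost occurrence outward, replacing a term $\alpha(t_1,\dots,t_n)$ sitting inside an atomic subformula by a fresh variable $w$, prefixing $\exists w$ and conjoining the graph condition $\theta(t_1,\dots,t_n,w)$; (iii) a constant symbol is the $n=0$ case of (ii). In (ii)--(iii) the replacement is sound because in each $\Ufrak$ in $\Ucal$ exactly one $w$ satisfies $\theta(t_1,\dots,t_n,w)$, namely the relevant value of $\alpha^{\Ufrak}$.

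Correctness is then routine: $\Ufrak$ and $\Ufrak_X$ have the same base set and interpret every symbol of $\Lcal\smallsetminus X$ identically, so an induction on the construction of $F$ --- using at the base case only that $\theta$ computes $\alpha^{\Ufrak}$ correctly for every $\Ufrak$ in $\Ucal$ --- shows that $\Ufrak$ satisfies $F$ if and only if $\Ufrak_X$ satisfies $\Acal_X^\alpha(F)$, and this holds simultaneously for all $\Ufrak$ in the class since one and the same $\theta$ works throughout $\Ucal$.

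For the quantifier-complexity clause, observe that the only logical material the substitution inserts is the quantifiers and connectives of $\theta$, a handful of extra existential quantifiers for the auxiliary variables, and equalities; when $F$ and $\theta$ are positive existential these appear only in positive positions (positive existential formulas carry no negations, so no negated $\alpha$-atoms arise), and after prenexing $\Acal_X^\alpha(F)$ is again positive existential --- this is the case actually invoked in the paper --- while an analogous count handles the bare existential case. I expect the only genuinely fiddly point to be the inside-out elimination of nested function terms together with the uniform verification of logical equivalence; the single spot deserving a word of care in the non-positive existential case is a negated occurrence $\neg\alpha(\bar t)$ of a relation symbol, which turns into $\neg\theta(\bar t)$, and which is handled by noting that in the relevant applications $\alpha$ occurs positively (or, if one wants the full statement, by also demanding that the complement of $\alpha$ be uniformly existentially definable). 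Everything else is bookkeeping.
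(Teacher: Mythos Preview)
Your approach is the same as the paper's --- the paper's entire proof is the single sentence ``In each $\Lcal$-sentence $F$, replace $\alpha$ by the formula that defines it uniformly'' --- and you have simply unpacked this standard definitional-expansion procedure with considerably more care than the paper does. Your observation about negated occurrences of a relation symbol $\alpha$ in the merely-existential case is well taken: the paper glosses over this, and as stated the existential clause is only unproblematic when $\alpha$ is a function or constant symbol (where the substitution introduces only $\exists$ and conjunction) or when one restricts to positive occurrences, which is the case actually used downstream.
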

\begin{proof}
In each $\Lcal$-sentence $F$, replace $\alpha$ by the formula that defines it uniformly.
\end{proof}

\begin{notation}
\begin{enumerate}
\item If $\Acal$ and $\Bcal$ are two algorithms such that the set of outputs of $\Bcal$ is included in the set of inputs of $\Acal$, we will denote by $\Acal\circ\Bcal$ the algorithm that first applies $\Bcal$ and then $\Acal$. 
\item Let $\alpha_1,\dots,\alpha_n\subset\Lcal$ be uniformly $\Lcal\smallsetminus X$-definable in a class $\Ucal$ of $\Lcal$-structures. We will denote by 
$$
\Acal_X^{\alpha_1,\dots,\alpha_n}
$$ 
the algorithm $\Acal_X^{\alpha_1}\circ\dots\circ\Acal_X^{\alpha_n}$.
\end{enumerate}
\end{notation}

\begin{proposition}\label{Quentin}
Let $\Gcal$, $\Gcal'$ and $\Gcal''$ be sets of sentences over $\Lcal$, $\Lcal'$ and $\Lcal''$ respectively. Let $\Mfrak$ be an $\Lcal$-structure, $\Ucal$ a class of $\Lcal'$-structures and $\Vcal$ a class of $\Lcal''$-structures. Let $(\Vcal_\Ufrak)$ be a partition of $\Vcal$ indexed by a subclass $\Ucal_{\rm ind}$ of $\Ucal$. If
\begin{itemize}
\item $(\Gcal,\Mfrak)$ is uniformly encodable in $(\Gcal',\Ucal)$ by an algorithm $\Acal$; and
\item there exists an algorithm $\Bcal$ such that for each $\Ufrak$ in $\Ucal_{\rm ind}$, the pair $(\Gcal',\Ufrak)$ is uniformly encodable in $(\Gcal'',\Vcal_\Ufrak)$ by $\Bcal$
\end{itemize}
then $(\Gcal,\Mfrak)$ is uniformly encodable in $(\Gcal'',\Vcal)$ by the algorithm $\Bcal\circ\Acal$. 
\end{proposition}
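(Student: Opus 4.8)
The plan is to unwind the definition of uniform encodability for both hypotheses and verify the three equivalent conditions for $\Bcal\circ\Acal$ directly. First I would fix a sentence $F\in\Gcal$ and set $G=\Acal(F)\in\Gcal'$ and $H=\Bcal(G)\in\Gcal''$; since $\Acal$ and $\Bcal$ are algorithms with matching input/output sets (the output of $\Acal$ lies in $\Gcal'$, which is where $\Bcal$ takes its inputs), $\Bcal\circ\Acal$ is a well-defined algorithm sending $F$ to $H\in\Gcal''$. The goal is then to show that the following are equivalent: (i) $\Mfrak\models F$; (ii) every $\Wfrak\in\Vcal$ satisfies $H$; (iii) some $\Wfrak\in\Vcal$ satisfies $H$.

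The implication (i)$\Rightarrow$(ii) goes as follows: if $\Mfrak\models F$, then by uniform encodability of $(\Gcal,\Mfrak)$ in $(\Gcal',\Ucal)$ via $\Acal$, every structure in $\Ucal$ satisfies $G$; in particular every $\Ufrak\in\Ucal_{\rm ind}\subseteq\Ucal$ satisfies $G$, so for each such $\Ufrak$, uniform encodability of $(\Gcal',\Ufrak)$ in $(\Gcal'',\Vcal_\Ufrak)$ via $\Bcal$ gives that every structure in $\Vcal_\Ufrak$ satisfies $H$. Since $(\Vcal_\Ufrak)_{\Ufrak\in\Ucal_{\rm ind}}$ is a partition of $\Vcal$, every $\Wfrak\in\Vcal$ lies in some $\Vcal_\Ufrak$ and hence satisfies $H$. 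The implication (ii)$\Rightarrow$(iii) is immediate as long as $\Vcal$ is non-empty, which holds because $\Vcal$ is a class (classes are non-empty by the standing convention in Notation \ref{cafe}, and moreover $\Ucal_{\rm ind}$ must be non-empty for $(\Vcal_\Ufrak)$ to partition a non-empty $\Vcal$).

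For (iii)$\Rightarrow$(i): suppose some $\Wfrak\in\Vcal$ satisfies $H$. Since the $\Vcal_\Ufrak$ partition $\Vcal$, there is a unique $\Ufrak\in\Ucal_{\rm ind}$ with $\Wfrak\in\Vcal_\Ufrak$, so $\Vcal_\Ufrak$ contains a structure satisfying $H$; by uniform encodability of $(\Gcal',\Ufrak)$ in $(\Gcal'',\Vcal_\Ufrak)$ via $\Bcal$ (condition (iii) of that encoding), $\Ufrak\models G$. Thus there exists a structure in $\Ucal$ (namely $\Ufrak\in\Ucal_{\rm ind}\subseteq\Ucal$) satisfying $G$, and uniform encodability of $(\Gcal,\Mfrak)$ in $(\Gcal',\Ucal)$ via $\Acal$ (again its condition (iii)) yields $\Mfrak\models F$. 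This closes the cycle of equivalences, so $(\Gcal,\Mfrak)$ is uniformly encodable in $(\Gcal'',\Vcal)$ by $\Bcal\circ\Acal$.

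This argument is essentially a bookkeeping exercise, so I do not expect a genuine obstacle; the one point requiring a little care is that all three equivalent conditions of the \emph{outer} encoding must actually be used — the "some" and "all" conditions of the inner encodings are what let the partition structure propagate in both directions — so the proof should be organized around the cycle (i)$\Rightarrow$(ii)$\Rightarrow$(iii)$\Rightarrow$(i) rather than trying to prove the three conditions independently. The only hypothesis that must be invoked explicitly beyond the two encoding assumptions is non-emptiness of the index class $\Ucal_{\rm ind}$, which is forced by the partition assumption together with the convention that classes are non-empty.
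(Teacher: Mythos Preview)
Your proof is correct and follows essentially the same approach as the paper: both arguments verify the cycle (i)$\Rightarrow$(ii)$\Rightarrow$(iii)$\Rightarrow$(i) by chasing the partition structure, using the ``all'' direction of each encoding for (i)$\Rightarrow$(ii) and the ``some'' direction for (iii)$\Rightarrow$(i). The paper's version differs only cosmetically, invoking Remark~\ref{RemFilter}(\ref{fil3}) to restrict $\Acal$ to $\Ucal_{\rm ind}$ rather than spelling out the subset inclusion directly, and leaving (ii)$\Rightarrow$(iii) implicit.
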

\begin{proof}
We may visualize the statement schematically as
$$
(\Gcal,\Mfrak)\xrightarrow{\Acal}(\Gcal',\Ucal)\supseteq(\Gcal',\Ucal_{\rm ind})
\xrightarrow{\Bcal}\left(\Gcal'',\bigcup_{\Ufrak\in\Ucal}\Vcal_\Ufrak\right)=\left(\Gcal'',\Vcal\right)
$$
and observe that by Item \ref{fil3} of Remark \ref{RemFilter}, $\Acal$ uniformly encodes $(\Gcal,\Mfrak)$ in $(\Gcal',\Ucal_{\rm ind})$. Let $F$ be an $\Lcal$-sentence. 

Let us prove that if $\Mfrak$ satisfies $F$ then each $\Vfrak$ in $\Vcal$ satisfies $\Bcal(\Acal(F))$. Since $\Vfrak$ is in $\Vcal$, it is in some $\Vcal_{\Ufrak}$, for some $\Ufrak$ in $\Ucal_{\rm ind}$. Since $\Mfrak$ satisfies $F$ and $(\Gcal,\Mfrak)$ is uniformly encodable in $(\Gcal',\Ucal)$ by $\Acal$, $\Ufrak$ satisfies $\Acal(F)$, and since $(\Gcal',\Ufrak)$ is uniformly encodable in $(\Gcal'',\Vcal_\Ufrak)$ by $\Bcal$, $\Vfrak$ satisfies $\Bcal(\Acal(F))$. 

Let us prove that if $\Vfrak$ satisfies $\Bcal(\Acal(F))$ for some $\Vfrak$ in $\Vcal$ then $\Mfrak$ satisfies $F$. Let $\Ufrak$ in $\Ucal$ be such that $\Vfrak$ is in $\Vcal_\Ufrak$. Since $\Vfrak$ satisfies $\Bcal(\Acal(F))$, also $\Ufrak$ satisfies $\Acal(F)$, hence $\Mfrak$ satisfies $F$. 
\end{proof}

We see from the proof that the above proposition actually requires only a covering of $\Vcal$ instead of a partition.

We now describe the general strategy that we will use several times in order to uniformly encode the natural numbers in classes of structures. Depending on the class in which we want to encode we will sometimes need two steps. \\

\noindent\textbf{One step encoding process.} Let $\Mfrak$ be a $\bar\Lcal$-structure.
In order to prove that a pair $(\fpe_{\bar\Lcal},\Mfrak)$ is uniformly encodable in a pair $(\fpe_{\Lcal},\Ucal)$ we will enlarge the language $\Lcal$ by a set of symbols $X=\{\alpha_1,\dots,\alpha_n\}$ and consider an interpretation of each element of $X$ in each $\Ufrak\in\Ucal$ so that
\begin{enumerate}
\item it is easy to prove that $(\fpe_{\bar\Lcal},\Mfrak)$ is uniformly encodable in $(\fpe_{\Lcal\cup X},\Ucal^X)$, say by an algorithm $\Acal$; and \label{onestepa}
\item each $\alpha$ in $X$ is uniformly positive existentially $\Lcal$-definable in $\Ucal$.\label{onestepb}
\end{enumerate}
From Item \ref{onestepb} we can apply Proposition \ref{Atransform}, and we will then be able to conclude by using Item \ref{fil2} of Remark \ref{RemFilter} since 
$$
\Acal_X^{\alpha_1,\dots,\alpha_n}(\fpe_{\Lcal\cup X})
$$ 
is included in $\fpe_\Lcal$. Schematically, we perform (with some obvious abuses of notation):
$$
(\fpe_{\bar\Lcal},\Mfrak)\xrightarrow{\,\,\,\Acal\,\,\,}(\fpe_{\Lcal\cup X},\Ucal^X)
\xrightarrow{\Acal_X^{\alpha_1,\dots,\alpha_n}}(\Acal_X^{\alpha_1,\dots,\alpha_n}(\fpe_{\Lcal}),\Ucal)
\subseteq(\fpe_{\Lcal},\Ucal)
$$
and we deduce that the algorithm $\Acal_0=\Acal_X^{\alpha_1,\dots,\alpha_n}\circ\Acal$ uniformly encodes $(\fpe_{\bar\Lcal},\Mfrak)$ in $(\fpe_{\Lcal},\Ucal)$.\\

\noindent\textbf{Two steps encoding process.} Let $\Mfrak$ be a $\bar\Lcal$-structure. Suppose that we have an algorithm $\Acal_0$ given by the ``one step encoding process'' to uniformly encode $(\fpe_{\bar\Lcal},\Mfrak)$ in a pair $(\fpe_{\Lcal},\Ucal)$ and that we want to encode it in another pair $(\fpe_{\Lcal'},\Vcal)$, for some class $\Vcal$ of $\Lcal'$-structures. Assume that we can find a partition $(\Vcal_\Ufrak)$ of $\Vcal$ indexed by a subclass $\Ucal_{\rm ind}$ of $\Ucal$ (note that by Item \ref{fil1} of Remark \ref{RemFilter}, $\Acal_0$ uniformly encodes $(\fpe_{\bar\Lcal},\Mfrak)$ in $(\fpe_{\Lcal},\Ucal_{\rm ind})$). In order to apply Proposition \ref{Quentin}, we need to find an algorithm $\Bcal$ such that for each $\Ufrak\in\Ucal_{\rm ind}$, $(\fpe_{\Lcal},\Ufrak)$ is uniformly encodable in $(\fpe_{\Lcal'},\Vcal_\Ufrak)$ by $\Bcal$. We then need to enlarge the language $\Lcal'$ by a set of symbols $Y=\{\beta_1,\dots,\beta_n\}$ and consider an interpretation of each element of $Y$ in each $\Vfrak\in\Vcal$ so that we can easily find an algorithm $\Bcal'$ such that
\begin{enumerate}
\item for each $\Ufrak\in\Ucal_{\rm ind}$, $(\fpe_{\Lcal},\Ufrak)$ is uniformly encodable in $(\fpe_{\Lcal'\cup Y},\Vcal_\Ufrak)$ by $\Bcal'$; and
\item each $\beta$ in $Y$ is uniformly positive existentially $\Lcal'$-definable in $\Vcal$.
\end{enumerate}
At this point, the algorithm $\Bcal$ is the composition 
$$
\Acal_Y^{\beta_1,\dots,\beta_n}\circ\Bcal'.
$$ 
We will then be able to conclude using Item \ref{fil2} of Remark \ref{RemFilter} since 
$$
\Acal_Y^{\beta_1,\dots,\beta_n}(\fpe_{\Lcal'\cup Y})
$$ 
is included in $\fpe_{\Lcal'}$. So the composition $\Bcal\circ\Acal_0$ uniformly encodes $(\fpe_{\bar\Lcal},\Mfrak)$ in $(\fpe_{\Lcal'},\Vcal)$. Schematically we obtain:
$$
(\fpe_{\bar\Lcal},\Mfrak)\xrightarrow{\,\,\,\Acal_0\,\,\,}(\fpe_{\Lcal},\Ucal)\supseteq (\fpe_{\Lcal},\Ucal_{\rm ind})
$$
$$
\xrightarrow{\,\,\,\Bcal'\,\,\,}
\left(\fpe_{\Lcal'\cup Y},\bigcup_{\Ucal_{\rm ind}}\Vcal_{\Ufrak}^{Y}\right)
\xrightarrow{\Acal_Y^{\beta_1,\dots,\beta_n}}(\Acal_Y^{\beta_1,\dots,\beta_n}(\fpe_{\Lcal'\cup Y}),\Vcal)
\subseteq(\fpe_{\Lcal'},\Vcal)
$$
and we deduce that the algorithm 
$$
\Acal_Y^{\beta_1,\dots,\beta_n}\circ\Bcal'\circ\Acal_0
$$ 
uniformly encodes $(\fpe_{\bar\Lcal},\Mfrak)$ in $(\fpe_{\Lcal'},\Vcal)$.\\

In order to find the algorithm $\Bcal'$ in the above process, we will need the following lemmas. They are certainly well known, but we decided to include them as we could not find a reference with the precise statements we needed. Let us introduce first some notation. 

\begin{notation}
Given a map $f\colon X\rightarrow Y$, we will denote by 
\begin{itemize}
\item $\sim_f$ the equivalence relation on $X$ defined by: $a\sim_f b$ if and only if $f(a)=f(b)$; 
\item $X_f$ the quotient set $\frac{X}{\sim_f}$;
\item $\pi_f$ the canonical projection 
$$
\pi_f\colon X\rightarrow X_f;
$$ 
\item $\bar f$ the unique map
$$
\bar f\colon X_f\rightarrow Y
$$ 
such that $\bar f\circ\pi_f=f$; and 
\item if $R$ is an $n$-ary relation on $X$ then $R_f$ will denote the $n$-ary relation on $X_f$ defined by: $R_f(\pi_f(x_1),\dots,\pi(x_n))$ if and only if there exist $u_1\in\pi_f(x_1)$, \dots, $u_n\in\pi_f(x_n)$ such that $R(u_1,\dots,u_n)$.
\end{itemize}
\end{notation}

\begin{lemma}\label{SetMon}
Let $X$ and $Y$ be sets together with $n$-ary relations $R$ on $X$ and $S$ on $Y$. Let $f\colon X\rightarrow Y$ be a function. If the function $f$ satisfies: 
\begin{enumerate}
\item if $R(x_1,\dots,x_n)$ holds then $S(f(x_1),\dots,f(x_n))$ and
\item if $S(f(x_1),\dots,f(x_n))$ holds then $R_f(\pi_f(x_1),\dots,\pi_f(x_n))$, 
\end{enumerate}
then the relation $R_f$ satisfies: 
$$
R_f(\bar x_1,\dots,\bar x_n) \textrm{ holds if and only if } S(\bar f(\bar x_1),\dots,\bar f(\bar x_n)) \textrm{ holds}
$$
for all $\bar x_1,\dots,\bar x_n\in X_f$.
\end{lemma}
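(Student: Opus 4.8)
\textbf{Plan for the proof of Lemma \ref{SetMon}.}
The statement is elementary set-theoretic bookkeeping about how a relation behaves after passing to the quotient by the fibre-equivalence of a map, so the plan is simply to chase the definitions, proving the two implications of the biconditional separately. Fix $\bar x_1,\dots,\bar x_n\in X_f$ and choose representatives $x_i\in X$ with $\pi_f(x_i)=\bar x_i$; note that $\bar f(\bar x_i)=f(x_i)$ by the defining property $\bar f\circ\pi_f=f$.

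For the forward direction, suppose $R_f(\bar x_1,\dots,\bar x_n)$ holds. By the definition of $R_f$ there exist $u_i\in\pi_f(x_i)$, that is $u_i\sim_f x_i$, with $R(u_1,\dots,u_n)$. Hypothesis (1) applied to the $u_i$ gives $S(f(u_1),\dots,f(u_n))$, and since $u_i\sim_f x_i$ means $f(u_i)=f(x_i)=\bar f(\bar x_i)$, we conclude $S(\bar f(\bar x_1),\dots,\bar f(\bar x_n))$. This direction uses only hypothesis (1) and the definition of $\sim_f$.

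For the backward direction, suppose $S(\bar f(\bar x_1),\dots,\bar f(\bar x_n))$ holds, i.e. $S(f(x_1),\dots,f(x_n))$. Hypothesis (2) then yields $R_f(\pi_f(x_1),\dots,\pi_f(x_n))$, which is exactly $R_f(\bar x_1,\dots,\bar x_n)$. I would remark that the statement of $R_f$ is independent of the chosen representatives (immediate from its definition, since $\pi_f(x_i)$ depends only on $\bar x_i$), so there is no well-definedness issue to worry about. There is essentially no obstacle here; the only thing to be careful about is keeping the quantifiers in the definition of $R_f$ straight and not conflating "$R_f$ holds for the projections of some tuple" with "$R$ holds for that tuple", which is why hypothesis (2) is phrased the way it is rather than as a converse of (1) at the level of $R$ itself.
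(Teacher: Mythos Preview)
Your proof is correct and follows essentially the same approach as the paper's: the forward direction unwinds the definition of $R_f$ to find witnesses $u_i$, applies hypothesis~(1), and then uses $f(u_i)=\bar f(\bar x_i)$, exactly as the paper does. The paper simply declares the backward direction immediate (writing ``We need only to prove the implication from left to right''), whereas you spell it out as a direct application of hypothesis~(2); this is a cosmetic difference only.
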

\begin{proof}
We need only to prove the implication from left to right. Let 
$$
\bar x_1,\dots,\bar x_n\in X_f
$$ 
and suppose that 
$$
R_f(\bar x_1,\dots,\bar x_n)
$$ 
holds. By definition of $R_f$, there exist 
$$
u_1\in\bar x_1,\, \dots,\,u_n\in\bar x_n
$$ 
such that 
$$
R(u_1,\dots,u_n)
$$ 
holds. By Condition 1, 
$$
S(f(u_1),\dots,f(u_n))
$$ 
holds, and since $f=\bar f\circ \pi$, 
$$
S(\bar f(\bar u_1),\dots,\bar f(\bar u_n))
$$ 
holds, hence also 
$$
S(\bar f(\bar x_1),\dots,\bar f(\bar x_n))
$$ 
holds. 
\end{proof}

\begin{definition}\label{relationonto}
Let $f\colon\Ufrak\rightarrow\Wfrak$ be a morphism of structures over a language $\Lcal$. We will say that $f$ is \emph{relation-onto} if for every relation symbol $R$ of $\Lcal$ we have: for all $x_1,\dots,x_n\in\Ufrak$, if $\Wfrak$ satisfies $R(f(x_1),\dots,f(x_n))$ then there exist $u_1\sim_f x_1$, \dots, $u_n\sim_f x_n$ such that $\Ufrak$ satisfies $R(u_1,\dots,u_n)$.
\end{definition}

Note that the condition of being relation-onto does not need to be checked for the equality (as it is trivially satisfied). 

\begin{definition}
Given a morphism of $\Lcal$-structures $f\colon\Ufrak\rightarrow\Wfrak$, where $\Ufrak$ has base set $U$, the quotient $\Lcal$-structure $\Ufrak_f$ is defined as follows:
\begin{itemize}
\item the base set of $\Ufrak_f$ is $U_f$;
\item for each function symbol $h$ (including constant symbols), the interpretation of $h$ in $\Ufrak_f$ is given by: 
$$
h^{\Ufrak_f}(\bar x_1,\dots,\bar x_n)=h^{\Ufrak}(x_1,\dots,x_n);
$$
\item for each relation symbol $R$, the interpretation of $R$ in $\Ufrak_f$ is given by: $R^{\Ufrak_f}(\bar x_1,\dots,\bar x_n)$ holds if and only if there exist $u_1\in\bar x_1,\dots,u_n\in\bar x_n$ such that $R^{\Ufrak}(u_1,\dots,u_n)$ holds.
\end{itemize}
\end{definition}

\begin{proposition}\label{OjosAzules}
Let $f\colon\Ufrak\rightarrow\Wfrak$ be a morphism of $\Lcal$-structures. We have: 
\begin{enumerate}
\item The quotient structure $\Ufrak_f$ is indeed an $\Lcal$-structure.
\item The canonical map $\pi_f\colon\Ufrak\rightarrow\Ufrak_f$ is a $\Lcal$-morphism.
\item The induced map $\bar f\colon\Ufrak_f\rightarrow\Wfrak$ is an injective $\Lcal$-morphism.
\item The morphism $f$ is relation-onto if and only if $\bar f$ is a $\Lcal$-monomorphism.
\item The morphism $f$ is onto and relation-onto if and only if $\bar f$ is a $\Lcal$-isomorphism.
\end{enumerate}
\end{proposition}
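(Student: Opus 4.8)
The plan is to handle the five items in order, since each one feeds into the next, and to push the relational content through Lemma \ref{SetMon} applied one relation symbol at a time. The overall principle is that $\Ufrak_f$ is literally the set-theoretic quotient $U_f$ equipped, for each relation symbol $R$, with the relation $R_f$ from the Notation preceding Lemma \ref{SetMon}, and with the evident induced functions; almost everything then reduces to unwinding definitions together with the fact that $f$ is an $\Lcal$-morphism.

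For Item 1, the only genuine point is well-definedness of the function interpretations (constants included): if $h$ is an $n$-ary function symbol and $x_i\sim_f x'_i$ for each $i$, then since $f$ is an $\Lcal$-morphism, $f\bigl(h^\Ufrak(x_1,\dots,x_n)\bigr)=h^\Wfrak(f(x_1),\dots,f(x_n))=h^\Wfrak(f(x'_1),\dots,f(x'_n))=f\bigl(h^\Ufrak(x'_1,\dots,x'_n)\bigr)$, whence $h^\Ufrak(x_1,\dots,x_n)\sim_f h^\Ufrak(x'_1,\dots,x'_n)$, which is exactly what the definition of $h^{\Ufrak_f}$ requires; the relation interpretations are defined outright on $U_f$ with nothing to check. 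For Item 2, $\pi_f$ commutes with function symbols by the very definition of $h^{\Ufrak_f}$, and if $R^\Ufrak(x_1,\dots,x_n)$ holds then taking the witnesses $u_i$ to be the $x_i$ themselves shows $R^{\Ufrak_f}(\pi_f(x_1),\dots,\pi_f(x_n))$ holds — and a morphism is only required to preserve relations in this one direction.

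For Item 3, $\bar f$ is well-defined and injective because $\sim_f$ is by construction the kernel relation of $f$; it commutes with function symbols by combining the defining formula for $h^{\Ufrak_f}$ with $f$ being a morphism; and it preserves relations because if $R^{\Ufrak_f}(\bar x_1,\dots,\bar x_n)$ holds there are $u_i\in\bar x_i$ with $R^\Ufrak(u_1,\dots,u_n)$, hence $R^\Wfrak(f(u_1),\dots,f(u_n))$ with $f(u_i)=\bar f(\bar x_i)$ — this is precisely the left-to-right direction established in Lemma \ref{SetMon} with $S=R^\Wfrak$. Items 4 and 5 are then formal. By Item 3, $\bar f$ is always an injective morphism preserving each relation, so $\bar f$ is an $\Lcal$-monomorphism if and only if, for every relation symbol $R$ and all classes $\bar x_i$, $R^\Wfrak(\bar f(\bar x_1),\dots,\bar f(\bar x_n))$ implies $R^{\Ufrak_f}(\bar x_1,\dots,\bar x_n)$; rewriting $\bar f(\bar x_i)=f(x_i)$ and noting $R^{\Ufrak_f}(\bar x_1,\dots,\bar x_n)=R_f(\pi_f(x_1),\dots,\pi_f(x_n))$, this is verbatim the relation-onto condition of Definition \ref{relationonto}, giving Item 4. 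For Item 5, $\bar f$ is onto if and only if $f=\bar f\circ\pi_f$ is onto, since $\pi_f$ is surjective; combining this with Item 4 and the definition of an $\Lcal$-isomorphism (a surjective $\Lcal$-monomorphism) yields the claim. There is no real obstacle; the only places needing a moment's care are the well-definedness in Item 1 and keeping straight that $R^{\Ufrak_f}$ is by definition the relation $R_f$, so that the relation-onto condition and the monomorphism condition match up with no further argument.
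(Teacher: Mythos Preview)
Your proof is correct and follows exactly the route the paper indicates: the paper's own proof is simply ``The proof is easy and left to the reader (it comes from Lemma \ref{SetMon})'', and you have faithfully filled in those details, invoking Lemma \ref{SetMon} at the right places and unwinding Definition \ref{relationonto} to get Item 4.
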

\begin{proof}
The proof is easy and left to the reader (it comes from Lemma \ref{SetMon}). 
\end{proof}

The following lemma is well known.

\begin{lemma}\label{Lemoncito}
Let $\Ufrak$ be an $\Lcal$-structure, $\asymp$ a binary relation symbol, and $T_\asymp$ the theory of the equality for the symbol $\asymp$. The quotient structure $\Ufrak/\asymp^\Ufrak$ is an $\Lcal$-structure which satisfies $T_\asymp$ (hence it is a \emph{equalitarian structure}) and is elementarily equivalent to $\Ufrak$. 
\end{lemma}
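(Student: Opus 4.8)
The plan is to verify three things in turn: that $\Ufrak/\!\asymp^\Ufrak$ is a well-defined $\Lcal$-structure, that it satisfies the equality axioms $T_\asymp$ for the symbol $\asymp$, and that it is elementarily equivalent to $\Ufrak$. Since $\asymp^\Ufrak$ need not be an equivalence relation a priori, the first step is to replace it by the equivalence relation $E$ it generates (reflexive-transitive-symmetric closure), or — more in the spirit of the paper — to observe that the intended reading is that $\asymp^\Ufrak$ \emph{is} an equivalence relation (or that we pass to the quotient by the equivalence relation it generates) and form the quotient $\Ufrak_E$ in the sense of the quotient-structure construction recalled just before. The interpretations of function symbols descend provided $E$ is a congruence for them; if $\asymp^\Ufrak$ is merely a relation symbol with no compatibility assumed, then one takes $E$ to be the smallest congruence containing $\asymp^\Ufrak$, which exists. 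The relation symbols are interpreted existentially over $E$-classes exactly as in the definition of $\Ufrak_f$ preceding the lemma (taking $f=\pi_E$ the canonical projection). This makes $\Ufrak/\!\asymp^\Ufrak$ into an $\Lcal$-structure; this is item (1)-type content and is routine.

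Next I would check that the quotient satisfies $T_\asymp$, i.e.\ that $\asymp$ is interpreted in the quotient as genuine equality of $E$-classes. By construction $\asymp^{\Ufrak_E}(\bar x,\bar y)$ holds iff there are representatives $u\in\bar x$, $v\in\bar y$ with $u\asymp^\Ufrak v$; since $E$ contains $\asymp^\Ufrak$ and is an equivalence relation, $u\asymp^\Ufrak v$ forces $\bar u=\bar v$ in the quotient, hence $\bar x=\bar y$; conversely equality of classes together with reflexivity of $\asymp^\Ufrak$ on each class (after passing to the congruence generated) gives the relation back. So $\asymp$ becomes equality in the quotient, which is precisely $T_\asymp$, and the quotient is an equalitarian structure.

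The main point — and the only step needing real care — is elementary equivalence of $\Ufrak$ and $\Ufrak_E$. Here I would invoke the canonical surjection $\pi_E\colon\Ufrak\to\Ufrak_E$ and check it is a surjective $\Lcal$-morphism which is \emph{relation-onto} in the sense of Definition \ref{relationonto}: surjectivity is clear, compatibility with function symbols holds because we quotiented by a congruence, and relation-onto-ness holds \emph{by the very definition} of the quotient interpretation of relation symbols (given $R^{\Ufrak_E}(\pi_E x_1,\dots,\pi_E x_n)$ there are $u_i\sim_{\pi_E} x_i$ with $R^\Ufrak(u_1,\dots,u_n)$). By Proposition \ref{OjosAzules}(5) the induced map $\bar\pi_E\colon\Ufrak_E\to\Wfrak$ — with $\Wfrak=\Ufrak_E$ and $f=\pi_E$ — is an $\Lcal$-isomorphism onto its image, so in fact $\Ufrak_E$ embeds back; but what we actually want is that $\pi_E$ preserves and reflects satisfaction of all first-order formulas. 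This follows from a straightforward induction on formula complexity using surjectivity and the relation-onto property: atomic formulas are handled by the morphism and relation-onto conditions, Boolean connectives are immediate, and the quantifier step uses surjectivity of $\pi_E$ to lift/push witnesses between $\Ufrak$ and $\Ufrak_E$. Hence every sentence true in $\Ufrak$ is true in $\Ufrak_E$ and conversely, i.e.\ the two structures are elementarily equivalent. The one subtlety I would flag is that preservation of formulas with quantifiers through a surjection that is only relation-onto (not an embedding on the nose) requires that the reflected witnesses stay coherent across the $\sim_{\pi_E}$-classes; this is exactly what Lemma \ref{SetMon} packages, so I would cite it rather than re-prove it.
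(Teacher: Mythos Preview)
The paper does not prove this lemma: it is introduced with ``The following lemma is well known'' and stated without argument. So there is no paper proof to compare against, and what matters is whether your sketch is correct.

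Your outline is broadly right when read under the hypothesis that $\Ufrak\models T_\asymp$, i.e.\ that $\asymp^\Ufrak$ is already a congruence. This is indeed the intended reading: look at how the lemma is applied in the proof of Proposition~\ref{Lemon}, where one first checks that $\Ufrak^{f_\Ufrak}$ satisfies $T_=$ and only then invokes Lemma~\ref{Lemoncito}. Under this hypothesis your induction on formula complexity works, because the congruence axioms in $T_\asymp$ are exactly what upgrades ``relation-onto'' to the stronger statement $R^{\Ufrak}(a_1,\dots,a_n)\Leftrightarrow R^{\Ufrak_E}(\pi_E a_1,\dots,\pi_E a_n)$, which is what you actually need to push negation through.

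The genuine gap is your alternative route via the congruence \emph{generated} by $\asymp^\Ufrak$. That approach breaks both conclusions. First, in the quotient the interpretation of $\asymp$ is computed from the original $\asymp^\Ufrak$, not from $E$; so if $\asymp^\Ufrak$ is, say, empty, then $\asymp^{\Ufrak_E}$ is also empty and the quotient does \emph{not} satisfy reflexivity, contrary to your claim. Second, and more seriously, elementary equivalence fails outright: if $\asymp^\Ufrak$ is empty then $\Ufrak\models\forall x\,\neg(x\asymp x)$, while any structure satisfying $T_\asymp$ models $\forall x\,(x\asymp x)$. So there is no way to rescue the lemma without assuming $\Ufrak\models T_\asymp$ from the start; you should state this explicitly rather than offering the generated-congruence option as a viable alternative. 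Finally, your appeal to Lemma~\ref{SetMon} for the quantifier step is misplaced: that lemma handles a single relation at the atomic level and says nothing about quantifiers. The quantifier step is handled purely by surjectivity of $\pi_E$ once the atomic case (with the two-way preservation coming from the congruence axioms) is in place.
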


\begin{proposition}\label{Lemon}
Let $\Lcal_0$ be a first order language. Let $\Ucal_0$ be a class of $\Lcal_0$-structures and $\Wfrak$ be an $\Lcal_0$-structure. Assume that for each structure $\Ufrak\in\Ucal_0$ there exists a morphism $f_\Ufrak\colon\Ufrak\rightarrow\Wfrak$ which is onto and relation-onto. Let $\Lcal_1$ be a language that contains $\Lcal_0$ and, given an interpretation for each symbol of $\Lcal_1\smallsetminus\Lcal_0$ in each structure $\Ufrak$ of $\Ucal_0$, we denote by $\Ufrak_1$ the new structure, and $\Ucal_1$ denotes the class of $\Lcal_1$-structures $\Ufrak_1$. If the collection of relations $\sim_{f_\Ufrak}$ is uniformly definable by an $\Lcal_1$-formula $\varphi(a,b)$ in $\Ucal_1$, then the algorithm $\Acal$ which does the following: 

{\tt \noindent In any $\Lcal_0$-sentence $F$, for each relation symbol $R$ (including the symbol of equality) that occurs in $F$, replace $R(x_1,\dots,x_n)$ by 
$$
\exists u_1,\dots,u_n\left(\bigwedge_{i=1}^n\varphi(u_i,y_i)\wedge R(u_1,\dots,u_n)\right);
$$}

\noindent uniformly encodes $(\Fcal_{\Lcal_0},\Wfrak)$ in $(\Fcal_{\Lcal_1},\Ucal_1)$. Moreover, 
\begin{itemize}
\item if the formula $\varphi(a,b)$ is existential then $\Acal$ uniformly encodes $(\Fcal_{\Lcal_0}^{\rm e},\Wfrak)$ in $(\Fcal_{\Lcal_1}^{\rm e},\Ucal_1)$; 
\item if the formula $\varphi(a,b)$ is positive existential then $\Acal$ uniformly encodes $(\Fcal_{\Lcal_0}^{\rm pe},\Wfrak)$ in $(\Fcal_{\Lcal_1}^{\rm pe},\Ucal_1)$.
\end{itemize}
\end{proposition}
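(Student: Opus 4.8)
The plan is to recognize that, read inside a fixed $\Ufrak_1\in\Ucal_1$, the formula $\Acal(F)$ is nothing but the translation of $F$ into the quotient structure $\Ufrak_{f_\Ufrak}$ (where $\Ufrak$ is the $\Lcal_0$-reduct of $\Ufrak_1$), and then to recognize $\Ufrak_{f_\Ufrak}$ as an isomorphic copy of $\Wfrak$. Concatenating these two facts gives $\Wfrak\models F\iff\Ufrak_1\models\Acal(F)$ for \emph{every} $\Ufrak_1$, and since $\Ucal_1$ is non-empty this is exactly the three-way equivalence that defines uniform encodability. (That $\Acal$ is an algorithm is clear: it performs a finite syntactic substitution, replacing each atomic subformula $R(t_1,\dots,t_n)$ by the displayed formula, with fresh bound variables at each occurrence.)

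First I would collect the structural input. For $\Ufrak\in\Ucal_0$ the map $f_\Ufrak\colon\Ufrak\to\Wfrak$ is onto and relation-onto, so Proposition~\ref{OjosAzules} gives that $\Ufrak_{f_\Ufrak}$ is an $\Lcal_0$-structure, that $\pi_{f_\Ufrak}\colon\Ufrak\to\Ufrak_{f_\Ufrak}$ is an $\Lcal_0$-morphism, and, by item~(5), that $\bar f_\Ufrak\colon\Ufrak_{f_\Ufrak}\to\Wfrak$ is an $\Lcal_0$-isomorphism. Hence $\Ufrak_{f_\Ufrak}\models F\iff\Wfrak\models F$ for every $\Lcal_0$-sentence $F$; this is the only place the hypotheses ``onto'' and ``relation-onto'' enter.

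The heart of the proof is the following claim, proved by induction on the formula: for every $\Lcal_0$-formula $G(v_1,\dots,v_m)$, every $\Ufrak_1\in\Ucal_1$ (with $\Lcal_0$-reduct $\Ufrak$), and all $a_1,\dots,a_m$ in the common base set,
$$
\Ufrak_1\models\Acal(G)(a_1,\dots,a_m)\quad\Longleftrightarrow\quad\Ufrak_{f_\Ufrak}\models G\big(\pi_{f_\Ufrak}(a_1),\dots,\pi_{f_\Ufrak}(a_m)\big).
$$
For an atom $G=R(t_1,\dots,t_n)$ (including $R$ the equality symbol) one has $\Acal(G)=\exists u_1,\dots,u_n\big(\bigwedge_i\varphi(u_i,t_i)\wedge R(u_1,\dots,u_n)\big)$ with fresh $u_i$; since $\varphi$ defines $\sim_{f_\Ufrak}$ in $\Ufrak_1$, the left side says precisely that there exist $b_i$ in $\Ufrak$ with $b_i\sim_{f_\Ufrak}t_i^{\Ufrak}(a_1,\dots,a_m)$ and $R^{\Ufrak}(b_1,\dots,b_n)$, which is exactly the definition of $R^{\Ufrak_{f_\Ufrak}}$ holding at $\big(\pi_{f_\Ufrak}t_1^{\Ufrak}(\bar a),\dots,\pi_{f_\Ufrak}t_n^{\Ufrak}(\bar a)\big)$; and since $\pi_{f_\Ufrak}$ is an $\Lcal_0$-morphism it commutes with evaluation of the $\Lcal_0$-terms $t_i$, so this matches the right side. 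Propositional connectives are immediate because $\Acal$ commutes with them, and the $\exists$ and $\forall$ steps use that $\pi_{f_\Ufrak}$ is surjective, so a witness ranging over $\Ufrak$ is the same as one ranging over $\Ufrak_{f_\Ufrak}$. Taking $m=0$ and combining with the previous paragraph gives, for every $\Lcal_0$-sentence $F$ and every $\Ufrak_1\in\Ucal_1$, the chain $\Ufrak_1\models\Acal(F)\iff\Ufrak_{f_\Ufrak}\models F\iff\Wfrak\models F$, from which the three-way equivalence follows as in the first paragraph.

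Finally I would verify that $\Acal$ respects the relevant syntactic class. For the full first-order case this is automatic, $\Fcal_{\Lcal_1}$ being all $\Lcal_1$-sentences. In the positive existential case no atom sits under a negation, so when $\varphi$ is positive existential each modified atom $\exists\bar u\big(\bigwedge_i\varphi(u_i,t_i)\wedge R(\bar u)\big)$ is positive existential, and because the fresh variables of distinct occurrences are pairwise disjoint one may pull all these existential quantifiers to the front through the surrounding conjunctions and disjunctions, producing a positive existential sentence; the bookkeeping for $\Fcal_{\Lcal_1}^{\rm e}$ is analogous (and routine). I expect the only real difficulty to be the atomic step of the induction: one has to keep a clear head about the fact that $\varphi$ is an $\Lcal_1$-formula evaluated in $\Ufrak_1$ whereas $R$ and the terms $t_i$ are $\Lcal_0$-data evaluated in $\Ufrak$ (equivalently, after applying $\pi_{f_\Ufrak}$, in $\Ufrak_{f_\Ufrak}$), and to line this up against the definition of the relations of the quotient structure; everything else is formal.
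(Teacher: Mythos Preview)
Your proof is correct and follows the same conceptual route as the paper: both use Proposition~\ref{OjosAzules}(5) to identify $\Ufrak_{f_\Ufrak}$ with $\Wfrak$, and both argue that evaluating $\Acal(F)$ in $\Ufrak_1$ amounts to evaluating $F$ in the quotient $\Ufrak_{f_\Ufrak}$. The only organizational difference is that where you carry out the direct induction on formulas, the paper instead introduces an auxiliary non-equalitarian $\Lcal_0$-structure $\Ufrak^{f_\Ufrak}$ (same base set and function symbols as $\Ufrak$, but with each relation---including equality---replaced by its $\sim_{f_\Ufrak}$-saturation), observes that $\Ufrak_1\models\Acal(F)$ is literally $\Ufrak^{f_\Ufrak}\models F$, and then invokes Lemma~\ref{Lemoncito} to pass from $\Ufrak^{f_\Ufrak}$ to its quotient $\Ufrak_{f_\Ufrak}$; your induction is precisely what is hidden inside that appeal to Lemma~\ref{Lemoncito}.
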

\begin{proof}
Let us show that the algorithm $\Acal$ uniformly encodes $(\Fcal_{\Lcal_0},\Wfrak)$ in $(\Fcal_{\Lcal_1},\Ucal_1)$ (the same algorithm works analogously for the two other cases). By Proposition \ref{OjosAzules}, Item 5, for each $\Ufrak$ in the class $\Ucal_0$, $\Wfrak$ satisfies $F$ if and only if $\Ufrak_{f_\Ufrak}$ satisfies $F$. Let us define the $\Lcal_0$-structure $\Ufrak^{f_\Ufrak}$ by
\begin{itemize}
\item the base set of $\Ufrak^{f_\Ufrak}$ is the base set of $\Ufrak$;
\item function symbols are interpreted in $\Ufrak^{f_\Ufrak}$ as in $\Ufrak$;
\item for each relation symbol $R$ (including the equality) of $\Lcal_0$, 
$$
R^{\Ufrak^{f_\Ufrak}}(x_1,\dots,x_n)
$$ 
holds if and only if there exists $u_1,\dots,u_n\in\Ufrak$ such that $u_1\sim_{f_\Ufrak}x_1$, \dots, $u_n\sim_{f_\Ufrak}x_n$ and $R^\Ufrak(u_1,\dots,u_n)$ holds.
\end{itemize}
In particular, the symbol of equality is interpreted in $\Ufrak^{f_\Ufrak}$ as the relation $\sim_{f_\Ufrak}$. By Proposition \ref{OjosAzules}, Item 2, the $\Lcal_0$-structure $\Ufrak^{f_\Ufrak}$ satisfies the theory of equality $T_=$. By Lemma \ref{Lemoncito}, the structure $\Ufrak_{f_\Ufrak}$ satisfies $F$ if and only if $\Ufrak^{f_\Ufrak}$ satisfies $F$. Therefore, $\Ufrak_{f_\Ufrak}$ satisfies $F$ if and only if $\Ufrak_1$ satisfies $\Acal(F)$.
\end{proof}

\section{Case of integers}

\subsection{Some general uniform definitions in $\Ncal$ and $\Dcal$}

In this section we will show in particular that squaring powers of a prime is uniformly positive existentially definable in $\Ncal$ and in $\Dcal$ - see Notation \ref{cafe}, Items \ref{midp}, \ref{Dcal}, and \ref{Ncal}. 

When working with the structures $\Nfrak_p$, the string `$a\leq b$' stands for 
$$
\exists c(b=a+c).
$$

\begin{notation}
\begin{enumerate}
\item for each prime number $p$ we define 
$$
P_p^>=\{p^h\colon h\in\N\}\qquad\qquad P_p^\pm=\{\pm p^h\colon h\in\N\}
$$ 
$$
P_{p,0}^>=\{p^h\colon h\in\N_{>0}\}\qquad\qquad P_{p,0}^\pm=\{\pm p^h\colon h\in\N_{>0}\}
$$
\end{enumerate}
\end{notation}

\begin{lemma}\label{Tonia}
The formula $P(n)=R(1,n)$ uniformly positive existentially 
\begin{enumerate}
\item $\Lcal^{*,+}$-defines the collection of sets $P_p^>$ in $\Ncal$ (hence in particular $P_p^>$ is $\Lcal^{*,+}$-uped in $\Ncal$);
\item $\Lcal_T^*$-defines the collection of sets $P_p^\pm$ in $\Dcal$ (hence in particular $P_p^\pm$ is $\Lcal_T^*$-uped in $\Dcal$).
\end{enumerate}
\end{lemma}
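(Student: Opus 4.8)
The relation $R$ in these structures is $p$-divisibility $\mid_p$ (this is what the symbol $R$ of $\Lcal^{*,+}$ and of $\Lcal_T^*$ is interpreted as in $\Ncal$, respectively $\Dcal$; in the $\Dcal$ case the relation is $\mid_p$, which up to sign is exactly ``being related by a power of $p$''). So the claim is really that $R(1,n)$ holds in $\Nfrak_p$ iff $n$ is a power of $p$ (i.e.\ $n=p^h$, $h\ge0$), and similarly $R(1,n)$ holds in $\Dfrak_p$ iff $n=\pm p^h$. This is immediate from the definition of $\mid_p$: $1\mid_p n$ means there is $s\in\Z$ with $n=\pm 1\cdot p^s$; since $n$ is a (non-negative, resp.\ nonzero) integer, $s$ must be $\ge0$, giving $n=p^s$ in $\Ncal$ and $n=\pm p^s$ in $\Dcal$. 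So the formula $P(n)\colon R(1,n)$ already defines the right set in each structure.

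The only thing to check for the statement is therefore that $P(n)=R(1,n)$ is a \emph{positive existential} formula over the relevant languages — which it plainly is, being an atomic formula using only the symbol $R$ (and the constant $1$), both of which belong to $\Lcal^{*,+}$ and to $\Lcal_T^*$. Uniformity is automatic: the same formula $R(1,n)$ works verbatim in every $\Nfrak_p$ and in every $\Dfrak_p$, with no dependence on $p$. So the plan is: (i) recall the interpretation of $R$ in $\Ncal$ and in $\Dcal$ from Notation~\ref{cafe}, Items~\ref{midp}, \ref{Dcal}, \ref{Ncal}; (ii) verify the set-theoretic equality $\{\,n : 1\mid_p n\,\}=P_p^>$ in $\N$ and $\{\,n : 1\mid_p n\,\}=P_p^\pm$ in $\Z$; (iii) observe that $R(1,n)$ is positive existential over $\Lcal^{*,+}$ and over $\Lcal_T^*$.

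There is essentially no obstacle here — this lemma is a trivial unwinding of definitions, serving to fix notation ($P_p^>$, $P_p^\pm$) for the substantive arguments that follow (where presumably one builds multiplication on top of these power-sets using the additive and order structure). The one point worth a word is the sign issue in the $\Dcal$ case: one must note that $1\mid_p n$ allows $n=-p^s$, hence the set defined is $P_p^\pm$ and not just $P_p^>$; this is why the two items of the lemma land on different families of sets. The phrase ``hence in particular $P_p^>$ is $\Lcal^{*,+}$-uped in $\Ncal$'' (resp.\ for $\Dcal$) is then just the special case of the definition of uped with the explicit defining formula $R(1,n)$.
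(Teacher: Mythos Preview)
Your proposal is correct and takes the same approach as the paper, which simply states ``This comes immediately from the definitions.'' Your write-up is just a more explicit unpacking of that immediacy; the only minor imprecision is the parenthetical ``nonzero'' in the $\Dcal$ case (in $\Dfrak_p$ the variable $n$ ranges over all of $\Z$, but $n=0$ is excluded automatically since $0\ne\pm p^s$), which does not affect the argument.
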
 
\begin{proof}
This comes immediately from the definitions. 
\end{proof}

\begin{lemma}\label{powers} 
The formula 
$$
P_0^\varepsilon(n)\colon
\begin{cases}
R(1,n)\wedge (n\geq2)&\textrm{if } \varepsilon \textrm{ is }>\\
R(1,n)\wedge T(n)&\textrm{if } \varepsilon \textrm{ is }\pm
\end{cases}
$$ 
uniformly positive existentially
\begin{enumerate}
\item $\Lcal^{*,+}$-defines the collection of sets $P_{p,0}^>$ in $\Ncal$ if $\varepsilon$ is $>$ (hence $P_{p,0}^>$ is $\Lcal^{*,+}$-uped in $\Ncal$).
\item $\Lcal_T^*$-defines the collection of sets $P_{p,0}^\pm$ in $\Dcal$ if $\varepsilon$ is $\pm$ (hence $P_{p,0}^\pm$ is $\Lcal_T^*$-uped in $\Dcal$).
\end{enumerate}
\end{lemma}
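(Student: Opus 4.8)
The plan is to deduce everything from Lemma~\ref{Tonia} together with a single harmless extra conjunct. Recall from that lemma that the atomic positive existential formula $R(1,n)$ already defines, uniformly in $p$, the set $P_p^>=\{p^h\colon h\in\N\}$ inside each $\Nfrak_p$, and the set $P_p^\pm=\{\pm p^h\colon h\in\N\}$ inside each $\Dfrak_p$. Since $P_{p,0}^>$ is exactly $P_p^>$ with the single element $p^0=1$ removed (note $0\notin P_p^>$), and $P_{p,0}^\pm$ is exactly $P_p^\pm$ with the two elements $\pm1$ removed, it suffices to intersect the locus of $R(1,n)$ with a set, definable by a positive existential formula uniformly over the relevant class, that excludes precisely these finitely many points and nothing else in the locus.

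For $\varepsilon$ being $>$: over $\Lcal^{*,+}$ the string ``$n\geq2$'' abbreviates $\exists c\,(n=(1+1)+c)$, which is positive existential, and in each $\Nfrak_p$ it is satisfied exactly by the natural numbers other than $0$ and $1$. Intersecting its solution set with $P_p^>$ deletes $p^0=1$ (and would delete $0$, which is not present), leaving $\{p^h\colon h\geq1\}=P_{p,0}^>$; here one uses $p\geq2$ to see that every $p^h$ with $h\geq1$ is itself $\geq2$ and is therefore retained. For $\varepsilon$ being $\pm$: over $\Lcal_T^*$ the atomic (hence positive existential) formula $T(n)$ is interpreted in $\Dfrak_p$ as $n\in\Z\smallsetminus\{-1,0,1\}$, and intersecting its solution set with $P_p^\pm$ removes exactly $\pm1$, leaving $\{\pm p^h\colon h\geq1\}=P_{p,0}^\pm$. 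In both cases $P_0^\varepsilon(n)$ is a conjunction of positive existential formulas, hence positive existential, and the computation above shows it defines the stated collection of sets uniformly; in particular each member of the collection is $\Lcal^{*,+}$-uped, respectively $\Lcal_T^*$-uped, in the class in question.

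There is no real obstacle here; the only point deserving a line of care is the verification that the auxiliary conjunct ($n\geq2$, respectively $T(n)$) is literally a formula of the stated language, is positive existential, and removes from the $R(1,\cdot)$-locus exactly the unwanted points uniformly in $p$ --- in particular that no small prime (e.g.\ $p=2$) produces a coincidence such as $p^h=1$ with $h\geq1$ or $-p=-1$, which is immediate from $p\geq2$.
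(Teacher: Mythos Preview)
Your proof is correct and follows the same approach as the paper, which simply states that the result ``comes immediately from the definitions.'' You have merely written out in detail what the paper leaves implicit: that $R(1,n)$ already captures $P_p^>$ (resp.\ $P_p^\pm$) by Lemma~\ref{Tonia}, and that the extra conjunct $n\geq 2$ (resp.\ $T(n)$) is positive existential in the relevant language and removes exactly the unwanted elements $1$ (resp.\ $\pm1$).
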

\begin{proof}
This comes immediately from the definitions.
\end{proof}

\begin{lemma}
Consider the positive existential formula 
$$
\bar\theta_P^\varepsilon(m,n)\colon P_0^\varepsilon(m)\wedge P_0^\varepsilon(n)\wedge R(m-1,n-m)
$$
over $\Lcal^{*,+}$ if $\varepsilon$ is $>$, and over $\Lcal_T^*$ if $\varepsilon$ is $\pm$. For each prime $p$, we have
\begin{enumerate}
\item $\Nfrak_p$ satisfies $\bar\theta_P^>$ if and only if $m,n\in P_{p,0}^>$ and $n=m^2$; and
\item $\Dfrak_p$ satisfies $\bar\theta_P^\pm$ if and only if $m,n\in P_{p,0}^\pm$
and 
\begin{itemize}
\item either $n=m^2$; or
\item $p=2$ and $(m,n)\in\{(-2,-8),(2,-2),(4,-2),(4,-8)\}$; or 
\item $p=3$ and $(m,n)=(3,-3)$.
\end{itemize}
\end{enumerate}
\end{lemma}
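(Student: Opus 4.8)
The plan is to reduce the statement to elementary arithmetic of powers of $p$. First, by Lemma~\ref{powers}, in $\Nfrak_p$ the conjunct $P_0^{>}(m)\wedge P_0^{>}(n)$ holds exactly when $m,n\in P_{p,0}^{>}$, and in $\Dfrak_p$ the conjunct $P_0^{\pm}(m)\wedge P_0^{\pm}(n)$ holds exactly when $m,n\in P_{p,0}^{\pm}$. So I would fix such $m,n$ and write $m=\eta_1 p^{a}$ and $n=\eta_2 p^{b}$ with integers $a,b\ge 1$ and signs $\eta_1,\eta_2\in\{1,-1\}$, where $\eta_1=\eta_2=1$ in the ``$>$'' case (in which moreover $m-1$ and $n-m$ must be elements of $\N$, so $n\ge m$). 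It then remains only to decide when $R(m-1,n-m)$, that is, when $(m-1)\mid_p(n-m)$, holds.

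The one conceptual point is a description of $\mid_p$: since $p$ divides $m$ but not $1$, the integer $m-1$ is coprime to $p$, and in particular $m-1\ne 0$; and for nonzero integers $x,y$ one has $x\mid_p y$ if and only if $|x|$ equals the prime-to-$p$ part of $|y|$, while $x\mid_p 0$ forces $x=0$. I would then factor the smaller power of $p$ out of $n-m=\eta_2 p^{b}-\eta_1 p^{a}$: the cofactor equals $\eta_2 p^{b-a}-\eta_1$ if $a<b$, equals $\eta_2-\eta_1 p^{a-b}$ if $a>b$, and equals $\eta_2-\eta_1$ if $a=b$. In the first two cases the cofactor is prime to $p$ (the exponent is at least $1$), so the prime-to-$p$ part of $|n-m|$ is its absolute value; if $a=b$ then either $n-m=0$ (when $\eta_1=\eta_2$) or $|n-m|=2p^{a}$. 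Using that $|m-1|=p^{a}-\eta_1$, the condition $(m-1)\mid_p(n-m)$ thus becomes a single numerical equation together with the requirement $n-m\ne 0$.

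Next I would run the case analysis. If $n=m^{2}$ then $n-m=p^{a}(p^{a}-\eta_1)=\eta_1 p^{a}(m-1)$, so $(m-1)\mid_p(n-m)$ and the formula holds; this settles one direction. For the converse: with $a<b$ the equation is $p^{b-a}-1=p^{a}-\eta_1$ or $p^{b-a}+1=p^{a}-\eta_1$ (according as $\eta_2=\eta_1$ or $\eta_2=-\eta_1$), which either forces $b=2a$, hence $n=m^{2}$, or reduces to $p^{b-a}-p^{a}=2$ or $p^{a}-p^{b-a}=2$; with $a>b$ one gets $b=0$ (impossible) or again an equation $p^{i}-p^{j}=2$ with $i>j\ge 1$; and with $a=b$ one needs $|m-1|=2$ for $p$ odd and $|m-1|=1$ for $p=2$. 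Each residual equation $p^{i}-p^{j}=2$, upon factoring out $p^{j}$ and using $p^{j}\ge p\ge 2$, forces $p=2$, $j=1$, $i=2$; combined with $|m-1|\in\{1,2\}$ this pins down exactly the pairs $(m,n)\in\{(2,-2),(-2,-8),(4,-8),(4,-2)\}$ when $p=2$ and $(m,n)=(3,-3)$ when $p=3$, while in all remaining situations $n=m^{2}$. Since none of these exceptional pairs lies in $\N\times\N$, specializing to $\eta_1=\eta_2=1$ yields item~1 (the constraint $n-m\in\N$ being automatic once $n=m^{2}$, and only this alternative surviving).

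The only real difficulty is the bookkeeping: three cases for the relative order of $a$ and $b$, each further split over the four sign patterns, with the need to retain precisely these five exceptional pairs and no others, nor to miss any. No deep ingredient is involved: the characterization of $\mid_p$ through prime-to-$p$ parts is immediate from the definition in Notation~\ref{cafe}, and the tiny exponential Diophantine equations that arise are solved by inspection.
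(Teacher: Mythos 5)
Your proof is correct, and it covers both directions (your case analysis is an equivalence throughout, so the exceptional pairs are both found and verified). It is, however, organized differently from the paper's proof. The paper introduces an auxiliary exponent by writing $n-m=\lambda p^\ell(m-1)$, shows $\ell>0$, and then extracts the exponent coincidences from the identity $\sigma p^s=\rho p^r-\lambda p^\ell+\rho\lambda p^{r+\ell}$ via uniqueness of $p$-adic expansions with digits in $\{-\tfrac{p-1}{2},\dots,\tfrac{p-1}{2}\}$, handling $p=2$ and $p=3$ by separate sign-by-sign analyses. You eliminate the auxiliary exponent at the outset: since $m-1$ is prime to $p$ and nonzero, $(m-1)\mid_p(n-m)$ is equivalent to $n\ne m$ together with $|m-1|$ being the prime-to-$p$ part of $|n-m|$, and factoring $p^{\min(a,b)}$ out of $n-m$ reduces everything to two-term equations of the shape $p^{|b-a|}-\eta_1\eta_2=p^a-\eta_1$, whose sporadic solutions ($p^i-p^j=2$, etc.) are read off instantly. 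This gives a somewhat cleaner endgame than the paper's three-term digit comparison, at no real cost. One small caution: your general claim that for nonzero $x,y$ one has $x\mid_p y$ iff $|x|$ equals the prime-to-$p$ part of $|y|$ is literally true only when $x$ is itself prime to $p$ (in general the correct statement is that the prime-to-$p$ parts of $|x|$ and $|y|$ coincide; e.g.\ $2\mid_2 1$). Since you apply it only to $x=m-1$, which you have already observed is prime to $p$, the argument is unaffected, but the lemma should be stated in the symmetric form.
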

\begin{proof}
We leave to the reader the verification of the implications from the right to the left. Suppose that $\bar\theta_P^\varepsilon$ is satisfied in $\Dfrak_p$ or $\Nfrak_p$ (depending on $\varepsilon$). There exist integers $r,s,\ell$ such that $r>0$ and $s>0$ and there exist $\rho,\sigma,\lambda$ in $\{-1,1\}$ (or $=1$ if working in $\Nfrak_p$) so that 
$$
m=\rho p^r\qquad n=\sigma p^s\qquad n-m=\lambda p^\ell(m-1).
$$
By direct substitution we obtain
$$
\sigma p^s-\rho p^r=\lambda p^\ell(\rho p^r-1)
$$
and deduce 
$$
\sigma p^s+\lambda p^\ell=\rho p^r(\lambda p^\ell+1)
$$
which implies that $\ell$ is positive (looking at the latter equation modulo $p$). Write the above equation as
\begin{equation}\label{padic0}
\sigma p^s=\rho p^r-\lambda p^\ell+\rho\lambda p^{r+\ell}
\end{equation}
and consider it over the ring $\Z_p$ of $p$-adic integers (or simply as an equation written in base $p$). 

In the case of $\Nfrak_p$, Equation \eqref{padic0} gives 
\begin{equation}\label{nat}
p^s+p^\ell=p^r+p^{r+\ell}.
\end{equation}
Since the right-hand side has two non-zero $p$-adic digits (for some choice of digits containing $1$), we have either $s=r$ and $\ell=r+\ell$, or $s=r+\ell$ and $\ell=r$. But the former case is impossible since $r>0$. Hence $s=2r$ and we deduce that $n=m^2$.

Let us come back to the general case of integers. Note that by Equation \eqref{padic0}, if $\rho=\lambda$ then $\sigma=\lambda\rho=1$ and $s=2r$, hence $n=m^2$.

If $p\geq 3$ then, since the coefficients lie between $-1$ and $1$ and since $r+\ell>\max\{r,\ell\}$, we deduce, from the uniqueness of the $p$-adic expansion, choosing for example representative ``digits'' within 
$$
D=\left\{-\frac{p-1}{2},\dots,\frac{p-1}{2}\right\},
$$ 
that $r=\ell$. Therefore, we have 
\begin{equation}\label{padic}
\sigma p^s=(\rho-\lambda)p^r+\lambda\rho p^{2r}
\end{equation}
and if $\rho=\lambda$ then $\sigma=\lambda\rho=1$ and $s=2r$, hence $n=m^2$. If $\rho$ is distinct from $\lambda$ then $p$ must be $3$ since otherwise the right-hand side would have two non-zero digits while the left-hand side has only one. Equation \eqref{padic} becomes
$$
\sigma 3^s=2\rho 3^r-3^{2r}
$$
hence 
$$
\sigma 3^{s-r}=2\rho-3^r
$$
which implies $s=r$ (looking at the equation modulo $3$). Therefore we have
$$
\sigma=2\rho-3^r
$$
which can happen only if $\sigma=-1$, $\rho=1$ and $r=1$, hence $(m,n)=(3,-3)$. 

Suppose that $p=2$. Note that if $r=\ell$ and $\rho=\lambda$ then we conclude that $n=m^2$ as before. 

\noindent\textbf{Case $\rho=-\lambda$.} Equation \eqref{padic0} becomes
\begin{equation}\label{padic2}
2^{r+\ell}+\sigma 2^s=\rho(2^r+2^\ell).
\end{equation}
If $\sigma=1$ then $\rho=1$, which gives 
$$
2^{r+\ell}+2^s=2^r+2^\ell.
$$ 
Since $r+\ell>\max\{r,\ell\}$, by the uniqueness of the $2$-adic expansion with digits $\{0,1\}$, we have $r=\ell$, hence 
$$
2^{2r}+2^s=2^{r+1}.
$$ 
Hence $s=2r$ and $2^{2r+1}=2^{r+1}$, which is impossible since $r>0$.

Therefore, $\sigma=-1$. If $\rho=-1$ then Equation \eqref{padic2} becomes
$$
2^{r+\ell}+2^r+2^\ell=2^s,
$$
which gives $r=\ell$ (again by uniqueness), hence 
$$
2^{2r}+2^{r+1}=2^s
$$ 
and we deduce that $2r=r+1$, hence $r=1$ and $s=3$, which corresponds to the pair $(m,n)=(-2,-8)$. If $\rho=1$ then Equation \eqref{padic2} becomes
$$
2^{r+\ell}=2^s+2^r+2^\ell,
$$
which implies (again by uniqueness of the expansion) that either $s=r$, or $s=\ell$, or $r=\ell$. 
\begin{itemize}
\item If $s=r$ then $2^{r+\ell}=2^{r+1}+2^\ell$, hence $r+1=\ell$, hence $2^{2r+1}=2^{r+2}$ and $r=1$. This case corresponds to the pair $(m,n)=(2,-2)$. 
\item If $s=\ell$ then $2^{r+s}=2^{s+1}+2^r$, hence $s+1=r$, hence $2^{2s+1}=2^{s+2}$ and $s=1$. This case corresponds to the pair $(m,n)=(4,-2)$. 
\item If $r=\ell$ then $2^{2r}=2^s+2^{r+1}$, hence $s=r+1$, hence $2^{2r}=2^{r+2}$ and $r=2$. This case corresponds to the pair $(m,n)=(4,-8)$. 
\end{itemize}

\noindent\textbf{Case $\rho=\lambda$ and $r\ne\ell$.} Equation \eqref{padic0} becomes
\begin{equation}\label{padic3}
\sigma 2^s=\rho 2^r-\rho 2^\ell+2^{r+\ell}.
\end{equation}
If $\rho=1$ then 
$$
\sigma 2^s=2^r- 2^\ell+2^{r+\ell}>2^r>0,
$$ 
hence $\sigma=1$. Therefore, we have 
$$
2^s+2^\ell=2^r+2^{r+\ell},
$$ 
which we know from the analysis of Equation \eqref{nat} that it has no solution unless $\ell=r$. 
\end{proof}

\begin{corollary}\label{Elina}
There exists a positive existential formula 
\begin{enumerate}
\item $\theta_P^>(m,n)$ that uniformly $\Lcal^{*,+}$-defines the collection of sets $\{(p^h,p^{2h})\colon h\in\N\}$ in $\Ncal$ (hence squaring in $P_p^>$ is $\Lcal^{*,+}$-uped in $\Ncal$). 
\item $\theta_P^\pm(m,n)$ that uniformly $\Lcal_T^*$-defines the collection of sets $\{(\pm p^h,p^{2h})\colon h\in\N\}$ in $\Dcal$ (hence squaring in $P_p^\pm$ is $\Lcal_T^*$-uped in $\Dcal$). 
\end{enumerate}
\end{corollary}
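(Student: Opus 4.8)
The plan is to build on the lemma immediately preceding the corollary, which already determines $\bar\theta_P^\varepsilon$ completely: over $\Ncal$ the formula $\bar\theta_P^>$ cuts out exactly the pairs $(p^h,p^{2h})$ with $h\ge1$, and over $\Dcal$ the formula $\bar\theta_P^\pm$ cuts out the pairs $(\pm p^h,p^{2h})$ with $h\ge1$ together with a short explicit list of exceptional pairs occurring only for $p=2$ and $p=3$. Two corrections are then needed to reach the advertised collections, which must also include $h=0$: adjoin that missing exponent, and, in the $\Dcal$ case, discard the exceptional pairs — all while staying inside the positive existential fragment.

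For Item 1 I would take
$$
\theta_P^>(m,n)\colon\ (m=1\wedge n=1)\ \vee\ \bar\theta_P^>(m,n).
$$
By the preceding lemma, $\Nfrak_p$ satisfies $\bar\theta_P^>(m,n)$ iff $(m,n)=(p^h,p^{2h})$ with $h\ge1$, and the disjunct $m=1\wedge n=1$ is precisely the case $h=0$; so $\theta_P^>$ uniformly $\Lcal^{*,+}$-defines $\{(p^h,p^{2h})\colon h\in\N\}$ in $\Ncal$. It is a disjunction of positive existential formulas, hence positive existential, and since the graph of squaring restricted to $P_p^>$ is exactly that set, squaring on $P_p^>$ is $\Lcal^{*,+}$-uped in $\Ncal$.

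For Item 2 the real work is removing the exceptional pairs, which cannot be done by negation. The key observation is that each exceptional pair in the lemma's list — $(-2,-8),(2,-2),(4,-2),(4,-8)$ for $p=2$ and $(3,-3)$ for $p=3$ — loses the property of satisfying $\bar\theta_P^\pm$ once the sign of its first coordinate is flipped, whereas a genuine pair $(\pm p^h,p^{2h})$ is invariant under $m\mapsto -m$ since $(-m)^2=m^2$. Accordingly I would set
$$
\theta_P^\pm(m,n)\colon\ \big[(m=1\vee m+1=0)\wedge n=1\big]\ \vee\ \big[\bar\theta_P^\pm(m,n)\wedge\bar\theta_P^\pm(-m,n)\big],
$$
where $-m$ abbreviates a variable $m'$ with $m+m'=0$ (as the paper already abbreviates $m-1$ and $n-m$ inside $\bar\theta_P^\varepsilon$, this keeps the formula positive existential). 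The first disjunct supplies $h=0$ (where $m=\pm1$ and $(\pm1)^2=1$), and the conjunction in the second, restricted to pairs already satisfying $\bar\theta_P^\pm$, survives exactly on the genuine pairs with $h\ge1$. Hence $\theta_P^\pm$ is positive existential, uniformly $\Lcal_T^*$-defines $\{(\pm p^h,p^{2h})\colon h\in\N\}$ in $\Dcal$, and this set is the graph of squaring on $P_p^\pm$, yielding the parenthetical claim.

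The only delicate point I foresee is the sign-flip verification in Item 2: one must check, case by case against the lemma's list, that for each exceptional $(m_0,n_0)$ the pair $(-m_0,n_0)$ fails $\bar\theta_P^\pm$ (e.g. $(2,-8)$ is not among the $\bar\theta_P^\pm$-pairs with second coordinate $-8$, and likewise for the rest), while the genuine pairs survive automatically. Everything else is a routine unwinding of the preceding lemma together with the observation that $(\pm p^h)^2=p^{2h}$.
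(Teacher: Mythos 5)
Your proposal is correct, and Item 1 coincides with the paper's proof verbatim. For Item 2 the overall structure is also the same (adjoin the $h=0$ pairs $(\pm1,1)$ by an explicit disjunct, then prune the exceptional pairs from $\bar\theta_P^\pm$), but the pruning device differs: the paper conjoins $\bar\theta_P^\pm(m,n)$ with $n\ne-2\wedge n\ne-3\wedge n\ne-8$, exploiting the fact that all genuine pairs have $n=p^{2h}>0$ while every exceptional pair has $n\in\{-2,-3,-8\}$; this stays positive existential only because $x\ne c$ for a fixed integer $c$ is expressible via the predicate $T$ (namely $T(x-c)\vee x-c=1\vee x-c=-1$), a point the paper leaves implicit. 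Your sign-flip conjunction $\bar\theta_P^\pm(m,n)\wedge\bar\theta_P^\pm(-m,n)$ avoids any appeal to $\ne$ altogether and your case-check is right: each of $(2,-8),(-2,-2),(-4,-2),(-4,-8),(-3,-3)$ lies neither among the genuine pairs (all of which have positive second coordinate) nor in the lemma's exceptional list, while the genuine pairs with $h\ge1$ are invariant under $m\mapsto-m$. Both routes are valid; yours is marginally cleaner in that it uses only the already-certified formula $\bar\theta_P^\pm$ and the group structure of the language.
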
 
\begin{proof}
Choose
$$
\theta_P^>(m,n)\colon \bar\theta_P^>(m,n)\vee (m=1\wedge n=1),
$$
for Item 1 and 
$$
\theta_P^\pm(m,n)\colon ((m=1\vee m=-1)\wedge n=1)\vee(\bar\theta_P^\pm(m,n)\wedge n\ne-2\wedge n\ne-3\wedge n\ne -8)
$$
for Item 2.
\end{proof}

\begin{remark}
Corollary \ref{Elina} allows us to write in our formulas terms like $a^{2}$, $a^{4}$, $a^8$,\dots whenever $a$ is an element of $P_p$, $P_{p,0}$, $P_p^+$ or $P_{p,0}^+$. 
\end{remark}


\subsection{Multiplication uniformly in $\Ncal$ and $\Zcal$}
In this section we will first prove Item \ref{1N} of Theorem \ref{Teoremon1} and then deduce Item \ref{1Z} from it. 

\begin{lemma}\label{MP}
The collections of sets
$$
M_p=\{(n,p^a,np^a)\colon n\geq0\textrm{ and }a\geq 0\}
$$ 
are $\Lcal^{*,+}$-uped in $\Ncal$.
\end{lemma}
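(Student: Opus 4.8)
The plan is to write down an explicit positive existential $\Lcal^{*,+}$-formula $\mu(x,y,w)$ and check that it defines $M_p$ in every $\Nfrak_p$. By Lemma \ref{Tonia} the condition ``$y$ is a nonnegative power of $p$'' is $R(1,y)$, and by Corollary \ref{Elina} we have at our disposal a positive existential formula $\theta_P^>(y,y_2)$ asserting ``$y=p^h$ and $y_2=p^{2h}$'', so that inside our formula we may refer simultaneously to $y=p^a$ and to its square $y_2=p^{2a}$. The candidate is
$$
\mu(x,y,w):\ \exists y_2\,\exists c\,\Big(R(1,y)\wedge\theta_P^>(y,y_2)\wedge R(x,w)\wedge R(x+1,w+y)\wedge R(x+y,w+y_2)\wedge w=x+c\Big),
$$
which is positive existential over $\Lcal^{*,+}$. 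Intuitively: $R(x,w)$ puts $w$ in the $p$-divisibility class of $x$ and $w=x+c$ then forces $w=xp^{s}$ with $s\ge 0$, while the two remaining conjuncts, through the identities $np^{a}+p^{a}=(n+1)p^{a}$ and $np^{a}+p^{2a}=(n+p^{a})p^{a}$, are designed to pin the exponent $s$ down to $a$.

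The forward implication is a routine substitution: if $(x,y,w)=(n,p^{a},np^{a})$, take $y_2=p^{2a}$ and $c=w-x$; each conjunct then holds, using the two displayed identities together with $p^{a}\ge 1$ and $p^{2a}\ge p^{a}$.

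For the converse, assume $\Nfrak_p\models\mu(x,y,w)$. The case $x=0$ is immediate, since $R(0,w)$ forces $w=0$ and $(0,y,0)\in M_p$. So suppose $x\ge 1$, write $y=p^{a}$, and from $R(x,w)$ together with $w\ge x$ get $w=xp^{s}$ with $s\ge 0$; likewise the last two conjuncts give $w+p^{a}=(x+1)p^{t}$ and $w+p^{2a}=(x+p^{a})p^{r}$ with $t,r\ge 0$ (the nonnegativity of the exponents again follows from size comparison). Rearranged, these read $x(p^{s}-p^{t})=p^{t}-p^{a}$ and $x(p^{s}-p^{r})=p^{a+r}-p^{2a}$, and the heart of the argument is to deduce $s=a$ by comparing $p$-adic valuations of both sides of each. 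One checks that $s>a$ is already impossible from the first equation (a valuation count forces $a<t<s$ and then $v_{p}(x)+t=a$, absurd), and that if $s<a$ then the first equation forces $s<t<a$ with $v_{p}(x)+s=t$, while the second forces $s<r<a$ with $v_{p}(x)+s=a+r$, giving $t=a+r\ge a$, contradicting $t<a$. Hence $s=a$, so $w=xp^{a}=xy$ and $(x,y,w)\in M_p$.

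The main obstacle — and the reason the naive formula using only the conjunct $R(x+1,w+y)$ fails — is exactly the case in which $x$ is itself divisible by $p$: then $R(x,w)$ constrains only the $p$-free part of $w$, so a spurious exponent $s<a$ can slip through, and it is precisely the extra conjunct $R(x+y,w+y_2)$, which uses the square $p^{2a}$ furnished by Corollary \ref{Elina}, that excludes it. A minor point in the valuation bookkeeping is that for $p=2$ the digit set is $\{0,1\}$ and carries behave specially, but the valuation computation above avoids explicit digit manipulation.
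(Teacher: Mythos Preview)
Your formula is essentially identical to the paper's (the paper writes $y^2$ directly, tacitly invoking Corollary~\ref{Elina}, whereas you make this explicit via $\theta_P^>$), and the forward direction and the $x=0$ case match. The converse argument, however, is organized differently and is in fact cleaner than the paper's.

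The paper, following \cite{PheidasPow}, first treats by hand all six cases in which two of the four exponents $a,\alpha,\beta,\gamma$ (your $a,s,t,r$) coincide, showing each forces $z=xy$; then, assuming all four are distinct, it lists the four admissible orderings, forms the product equation $(p^\alpha-p^\gamma)(p^\beta-p^a)=(p^\alpha-p^\beta)(p^\gamma-p^a)p^a$, eliminates two orderings by a size comparison, and kills the remaining two by reading off $p$-adic valuations of both sides. Your argument bypasses both the ``two exponents equal'' case split and the product equation: you split only on the trichotomy $s>a$, $s<a$, $s=a$, and for each of the two bad cases you read valuations directly off the original equations $x(p^s-p^t)=p^t-p^a$ and $x(p^s-p^r)=p^{a+r}-p^{2a}$. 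In particular your observation that the first equation alone already excludes $s>a$ (since it forces $a<t<s$ and then $v_p(x)+t=a$) is a genuine shortcut. The trade-off is modest: the paper's approach is the historical one and makes the symmetry among the exponents visible, while yours is shorter and avoids introducing the auxiliary product identity.
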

\begin{proof}
Following the strategy of the second author in \cite[Section 2]{PheidasPow}, we show that the following formula $\varphi(x,y,z)$
$$
P^>(y)\wedge z\geq x\wedge R(x,z)\wedge R(x+1,z+y)\wedge R(x+y,z+y^2)
$$
is true in $\Nfrak_p$ if and only if $(x,y,z)\in M_p$. 

If $z=xy$ and $y=p^a$ for some non-negative integer $a$, then we have 
\begin{itemize}
\item $z\ge x$; 
\item $z=xp^a$;
\item $z+y=xp^a+p^a=p^a(x+1)$; and 
\item $z+y^2=xp^a+p^{2a}=p^a(x+y)$,
\end{itemize}
hence $\Nfrak_p$ satisfies $\varphi(x,y,z)$. 

Suppose that $\Nfrak_p$ satisfies $\varphi(x,y,z)$. There exist integers $a,\alpha,\beta,\gamma$ such that $a\ge0$ and
\begin{equation}\label{ph0}
y=p^a
\end{equation}
\begin{equation}\label{ph1}
z=p^\alpha x
\end{equation}
\begin{equation}\label{ph2}
z+y=p^\beta(x+1)
\end{equation}
\begin{equation}\label{ph3}
z+y^2=p^\gamma(x+y).
\end{equation}
First note that if $x=0$ then $z=0$ and we are done, hence we suppose that $x$ is positive. From $z=p^\alpha x$, $z\ge x$ and $x\ge1$ we deduce that $\alpha$ is non-negative. Also we have $\beta$ and $\gamma$ non-negative (since from Equation \eqref{ph2} we have 
$$
x+1\le z+y=p^\beta(x+1)
$$ 
and from Equation \eqref{ph3} we have $x+y\le z+y^2=p^\gamma(x+y)$). 

From Equation \eqref{ph1}, \eqref{ph2} and \eqref{ph3} we obtain
\begin{equation}\label{ph4}
x(p^\alpha-p^\beta)=p^\beta-p^a
\end{equation}
and from Equation \eqref{ph0}, \eqref{ph1} and \eqref{ph3} we obtain
\begin{equation}\label{ph5}
x(p^\alpha-p^\gamma)=p^{a+\gamma}-p^{2a}.
\end{equation}

Let us prove that if two elements of $\{a,\alpha,\beta,\gamma\}$ are equal then $z=xy$. 
\begin{itemize}
\item If $a=\alpha$ then we conclude from Equations \eqref{ph0} and \eqref{ph1}.
\item If $a=\beta$ then we conclude that $\alpha=\beta$ from Equation \eqref{ph4} and $x>0$.
\item If $a=\gamma$ then we conclude that $\alpha=\gamma$ from Equation \eqref{ph5} and $x>0$.
\item If $\alpha=\beta$ then we conclude that $a=\beta$ from Equation \eqref{ph4} and $x>0$.
\item If $\alpha=\gamma$ then we conclude that $a=\gamma$ from Equation \eqref{ph5} and $x>0$.
\item If $\beta=\gamma$ then from Equations \eqref{ph4} and \eqref{ph5} we have 
$p^\beta-p^a=p^{a+\beta}-p^{2a}$, hence $p^\beta(1-p^a)=p^a(1-p^a)$, hence either $a=\beta$, in which case we can conclude as above, or $a=0$ and $\beta>0$. In the latter case, from Equation \eqref{ph4} we obtain $x(p^\alpha-p^\beta)=p^\beta-1>0$, hence $\alpha>\beta>0$, which is impossible since $p$ does not divide $p^\beta-1$. 
\end{itemize}

From now on, we may suppose that $a$, $\alpha$, $\beta$, and $\gamma$ are pairwise distinct. From Equation \eqref{ph4}, we have
\begin{equation}
\alpha>\beta\textrm{ if and only if }\beta>a
\end{equation}
hence either $\alpha>\beta>a$ or $\alpha<\beta<a$. Similarly, from Equation \eqref{ph5}, we have
\begin{equation}
\alpha>\gamma\textrm{ if and only if }\gamma>a
\end{equation}
hence either $\alpha>\gamma>a$ or $\alpha<\gamma<a$. So we have four possible orders:
\begin{enumerate}
\item $\alpha>\beta>\gamma>a$;\label{or1}
\item $\alpha>\gamma>\beta>a$;\label{or2}
\item $\alpha<\beta<\gamma<a$; or\label{or3}
\item $\alpha<\gamma<\beta<a$.\label{or4}
\end{enumerate}
From Equations \eqref{ph4} and \eqref{ph5} we have
\begin{equation}\label{ph6}
(p^\alpha-p^\gamma)(p^\beta-p^a)=(p^\alpha-p^\beta)(p^\gamma-p^a)p^a.
\end{equation}
Hence the orders \ref{or2} and \ref{or4} are impossible (otherwise the left-hand side would have smaller absolute value than the right-hand side). In case of order number \ref{or1}, the valuation at $p$ in Equation \eqref{ph6} gives 
$$
\gamma+a=\beta+2a,
$$ hence 
$$
\gamma=\beta+a>\gamma+a,
$$ 
which is absurd. In case of order number \ref{or3}, we obtain 
$$
\alpha+\beta=\alpha+\gamma+a,
$$ 
hence 
$$
\beta=\gamma+a>\beta+a,
$$ 
which is absurd.
\end{proof}

Next corollary proves Item \ref{1N} of Theorem \ref{Teoremon1}.

\begin{corollary}\label{MultN}
Multiplication is $\Lcal^{*,+}$-uped in $\Ncal$. 
\end{corollary}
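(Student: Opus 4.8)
The plan is to promote the ``multiplication by a power of $p$'' supplied by Lemma~\ref{MP} to genuine multiplication, in the spirit of the second author's argument in \cite{PheidasPow}, by encoding a product inside the base-$q$ digits of an auxiliary integer for $q$ a suitably large power of $p$. Throughout I use freely that over $\N$ the relation $a\le b$ is positive existentially $\{0,1,+\}$-definable (namely $\exists c\,(b=a+c)$), that by Lemma~\ref{Tonia} the predicate ``$q$ is a power of $p$'' is $\Lcal^{*,+}$-uped in $\Ncal$, and --- crucially --- that by Lemma~\ref{MP} the ternary relation ``$c=nq$ with $q$ a power of $p$'' is $\Lcal^{*,+}$-uped in $\Ncal$, so that in a positive existential $\Lcal^{*,+}$-formula I may write $nq$ whenever $q$ has been asserted to be a power of $p$.

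The degenerate triples are disposed of by the disjunct $(x=0\vee y=0)\wedge z=0$, so I may assume $x,y,z\ge1$. The building block I would isolate is a \emph{staircase}: for a power $q$ of $p$ with $q>x$, an integer $X$ is ``the staircase of $x$'s of length $\ell$ in base $q$'' (i.e.\ $X=x(1+q+\cdots+q^{\ell-1})$) precisely when there are a power $w$ of $p$ and an integer $S$ with $Sq+1=S+w$ and $Xq+x=X+xw$; indeed the first equation forces $w=S(q-1)+1$ to be a power of $p$, and since $\tfrac{p^{j}-1}{p^{a}-1}$ is an integer only when $a\mid j$ this is automatically a power $q^{\ell}$ of $q$, whence $S=1+q+\cdots+q^{\ell-1}$ and then $X=xS$. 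Since $x<q$ the base-$q$ digits of such an $X$ are $\ell$ copies of $x$ with no carrying, so its digit-sum is $x\ell$ and in particular $X\equiv x\ell\pmod{q-1}$. Running the construction with $x+1$ in place of $x$ and the \emph{same} $w$ produces a companion $X'=(x+1)S$ with $X'-X=S\equiv\ell\pmod{q-1}$; imposing $X'-X\equiv y\pmod{q-1}$ --- which is positive existential, as it reads $\exists m\,((X'-X)+m=mq+y)$ and $mq$ is allowed --- forces $\ell\equiv y\pmod{q-1}$, and if moreover $q>x+y+1$ and $\ell\ge1$ this gives $\ell\ge y$ (hence $X\ge xy$) together with $x\ell\equiv xy\pmod{q-1}$.

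With this gadget the formula for $z=xy$ runs in two stages. First one takes a power $q_{1}$ of $p$ with $q_{1}>x+y+1$ and builds a staircase $X_{1}$ of $x$'s of length $\equiv y\pmod{q_{1}-1}$; then $X_{1}\ge xy$, and one picks a further power $q_{2}$ of $p$ with $q_{2}>X_{1}$, so in particular $q_{2}>xy$. Second, in base $q_{2}$ one builds a staircase $X_{2}$ of $x$'s of length $\equiv y\pmod{q_{2}-1}$; its digit-sum is $\equiv xy\pmod{q_{2}-1}$, and since one also asserts $z<q_{2}-1$ and one already has $xy<q_{2}-1$, the single congruence $X_{2}\equiv z\pmod{q_{2}-1}$ --- written $\exists m\,(X_{2}+m=mq_{2}+z)$ --- pins $z$ down to $xy$. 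Conversely, for the true triple all the required witnesses exist once the exponents of $q_{1}$ and $q_{2}$ are taken large enough. This shows that the resulting formula $\varphi(x,y,z)$ defines the graph of multiplication in every $\Nfrak_p$, i.e.\ multiplication is $\Lcal^{*,+}$-uped in $\Ncal$, which is the corollary and is exactly Item~\ref{1N} of Theorem~\ref{Teoremon1}.

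The step I expect to be the main obstacle is not any individual identity but the global bookkeeping that rules out spurious witnesses: one must check that \emph{every} choice of $q_{1},q_{2},w,S,X,X',m,\dots$ satisfying the conjunction really forces $z=xy$, and not merely $z\equiv xy$ modulo something, nor $z=xy$ only for the intended staircase lengths. This is precisely why the preliminary first stage is needed --- it manufactures a power of $p$ provably above the product, the one thing one cannot otherwise positive existentially assert, and without it a congruence modulo $q-1$ only recovers $xy$ up to multiples of $q-1$. Corollary~\ref{Elina} is available if one prefers to regulate the staircase lengths by repeated squaring of the base rather than through the auxiliary powers $w$.
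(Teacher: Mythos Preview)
Your approach is the same as the paper's: the paper's proof is a one-line citation to \cite[Lemma~3]{PheidasPow}, and you are carrying out precisely that reduction, promoting the ``multiply by a power of $p$'' relation of Lemma~\ref{MP} to full multiplication via base-$q$ staircases and congruences modulo $q-1$.

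There is, however, a concrete gap in exactly the place you flagged. From $q_2>X_1$ and $X_1\ge xy$ you only obtain $q_2-1\ge xy$, not the strict inequality $q_2-1>xy$ that you need to pin $z$ down from $z\equiv xy\pmod{q_2-1}$ and $z<q_2-1$. This is not hypothetical: take $p=2$, $x=3$, $y=1$, $z=0$. In stage~1 choose $q_1=8$, $\ell_1=1$, so $S_1=1$, $X_1=3$, and all the stage-1 clauses hold. In stage~2 choose $q_2=4>X_1$, $\ell_2=1$, so $S_2=1$, $X_2=3$; then $X_2'-X_2=1\equiv y\pmod 3$, $z=0<3=q_2-1$, and $X_2=3\equiv 0=z\pmod 3$, so every clause of your formula is satisfied while $z\ne xy$. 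The fix is immediate: strengthen the assertion to $q_2-1>X_1$ (equivalently $q_2>X_1+1$), which is still positive existential and yields $q_2-1>X_1\ge xy$ as required; completeness is unaffected since powers of $p$ are unbounded. With this adjustment your argument goes through, and you may also drop the parenthetical digit-sum justification: the congruence $X=xS\equiv x\ell\pmod{q-1}$ holds regardless of whether $x<q$, so the base-$q$ interpretation is only heuristic.
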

\begin{proof}
The proof is identical to the proof of \cite[Lemma 3]{PheidasPow} using Lemma \ref{MP} instead of \cite[Lemma 2]{PheidasPow}. 
\end{proof}

Next corollary proves Item \ref{1Z} of Theorem \ref{Teoremon1}.

\begin{corollary}\label{upedinZ}
Multiplication is $\Lcal^*$-uped in $\Zcal$. 
\end{corollary}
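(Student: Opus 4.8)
The plan is to bootstrap from Corollary \ref{MultN}: we already know multiplication is $\Lcal^{*,+}$-uped in $\Ncal$, and the passage to $\Zcal$ is carried out in two moves — relativize the formula to the non-negative elements (using the predicate $\pos$ that $\Lcal^*$ provides), and then handle arbitrary integers by writing them as differences of non-negative ones.

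First I would take the positive existential $\Lcal^{*,+}$-formula $\varphi(x,y,w)$ given by Corollary \ref{MultN}, which uniformly defines multiplication in $\Ncal$, and relativize all of its existential quantifiers to $\pos$, i.e.\ replace each subformula $\exists u\,\psi$ by $\exists u(\pos(u)\wedge\psi)$; call the result $\varphi^{\geq0}(x,y,w)$. Since we only added conjuncts of the atomic formula $\pos(u)$, $\varphi^{\geq0}$ is again positive existential, now over $\Lcal^*$. The crucial observation is that for every prime $p$ the set of non-negative elements of $\Zfrak_p$, with the induced interpretations of $0,1,+$ and $R$, is exactly the $\Lcal^{*,+}$-structure $\Nfrak_p$: it contains $0$ and $1$, it is closed under addition, and the restriction to $\N\times\N$ of the relation $\mid_p$ on $\Z$ coincides with the relation $\mid_p$ on $\N$, because a relation $y=-xp^s$ between non-negative integers forces $x=y=0$ and is then already an instance of $y=xp^s$. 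By the standard relativization lemma it follows that for non-negative $a,b,w_0$ we have $\Zfrak_p\models\varphi^{\geq0}(a,b,w_0)$ if and only if $\Nfrak_p\models\varphi(a,b,w_0)$, that is, if and only if $w_0=ab$.

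Second, I would pass from non-negative integers to all integers. Define the positive existential $\Lcal^*$-formula $\mu(x,y,w)$ by
\begin{multline*}
\exists a,b,c,d,w_1,w_2,w_3,w_4\ \Big(\ \pos(a)\wedge\pos(b)\wedge\pos(c)\wedge\pos(d)\wedge\pos(w_1)\wedge\pos(w_2)\wedge\pos(w_3)\wedge\pos(w_4)\\
{}\wedge\ x+b=a\ \wedge\ y+d=c\ \wedge\ \varphi^{\geq0}(a,c,w_1)\wedge\varphi^{\geq0}(a,d,w_2)\wedge\varphi^{\geq0}(b,c,w_3)\wedge\varphi^{\geq0}(b,d,w_4)\\
{}\wedge\ w+w_2+w_3=w_1+w_4\ \Big).
\end{multline*}
The intended reading is $x=a-b$, $y=c-d$ with $a,b,c,d\geq0$, so that $w_1=ac$, $w_2=ad$, $w_3=bc$, $w_4=bd$, and the last equation says $w=(a-b)(c-d)=xy$. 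The verification is routine and symmetric: given $x,y,w$ with $w=xy$, choose $(a,b)=(x,0)$ or $(0,-x)$ according to the sign of $x$ and likewise $(c,d)$, and take the $w_i$ to be the four products; conversely, from any witnesses satisfying $\mu$ the previous paragraph forces $w_1=ac,\dots,w_4=bd$, and then the final equation forces $w=(a-b)(c-d)=xy$. Hence $\mu$ uniformly positive existentially $\Lcal^*$-defines multiplication in $\Zcal$, which is the claim.

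The only step that really needs care is the relativization: one must check that the non-negative part of $\Zfrak_p$ is $\Nfrak_p$ as an $\Lcal^{*,+}$-structure — in particular that the extra sign in the definition of $\mid_p$ over $\Z$ produces no new incidences among non-negative integers — and one must remember to constrain the free output variables $w_1,\dots,w_4$ explicitly by $\pos$ so that the relativization lemma applies to them as well; the rest is bookkeeping.
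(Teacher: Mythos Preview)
Your proof is correct and follows essentially the same strategy as the paper: relativize the $\Lcal^{*,+}$-formula from Corollary \ref{MultN} to the non-negative elements via $\pos$, then extend to all of $\Z$. The only difference is cosmetic: in the extension step the paper takes a disjunction over the eight sign patterns $\varepsilon\in\{-1,1\}^3$ and applies the relativized formula to $(\varepsilon_1 x,\varepsilon_2 y,\varepsilon_3 z)$, whereas you write $x=a-b$, $y=c-d$ with $a,b,c,d\geq0$ and use the bilinear expansion --- both are standard devices and either works.
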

\begin{proof}
Let $\mu(x,y,z)$ be a positive existential $\Lcal^{*,+}$-formula that uniformly defines multiplication $z=xy$ in $\Ncal$ (it exists from Corollary \ref{MultN}). Let $\bar\mu(x,y,z)$ be the $\Lcal^*$-formula obtained from $\mu$ by replacing (syntactically) all occurences of the form $\exists u$ (where $u$ is a variable) by $\exists u\geq0$. The (positive existential) $\Lcal^*$-formula 
$$
\mu_1(x,y,z)=\bar\mu(x,y,z)\wedge x\geq0\wedge y\geq0 \wedge z\geq0
$$
uniformly defines the set 
$$
\{(x,y,z)\colon z=xy\textrm{ and } x,y,z\geq0\}
$$ 
in $\Zcal$. 
The (positive existential) $\Lcal^*$-formula 
$$
\mu_2(x,y,z)=\bigvee_{\varepsilon\in\{-1,1\}^3}\varepsilon_1x\geq0\wedge
\varepsilon_2y\geq0\wedge\varepsilon_3z\geq0\wedge
\mu_1(\varepsilon_1x,\varepsilon_2y,\varepsilon_3z)
$$
uniformly defines $\{(x,y,z)\colon z=xy\}$ in $\Zcal$. 
\end{proof}

\subsection{Multiplication uniformly in $\Dcal$}

In this section we prove Item \ref{1D} of Theorem \ref{Teoremon1}.

\begin{lemma}\label{coprime} 
There is a positive existential $\Lcal_T^*$-formula $CO(x)$ that defines uniformly the collection of sets 
$$
CO_p=\{n\in\Z\colon p\nmid n\}
$$ 
in $\Dcal$ (hence the sets $CO_p$ are $\Lcal_T^*$-uped in $\Dcal$).
\end{lemma}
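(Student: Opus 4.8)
The plan is to exhibit a short positive existential $\Lcal_T^*$-formula and check directly that it works in every $\Dfrak_p$; the whole content is a B\'ezout-type identity. I propose
$$
CO(x)\colon\ \exists u,v,w\,\bigl(R(1,u)\wedge T(u)\wedge x\mid v\wedge u\mid w\wedge v+w=1\bigr),
$$
which is positive existential (only $\exists$, conjunctions, the relation symbols $R$, $\mid$, $T$, the constant $1$ and $+$ occur). Recall that $R$ is interpreted as $\mid_p$ in $\Dfrak_p$, and that by Lemma \ref{powers} the subformula $R(1,u)\wedge T(u)$ (which is precisely $P_0^\pm(u)$) forces $u\in P_{p,0}^\pm=\{\pm p^h\colon h\geq1\}$; in particular $p\mid u$.

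First I would check the easy direction: if $p\nmid n$, then $\gcd(n,p)=1$, so there are $a,b\in\Z$ with $an+bp=1$; taking $u=p$, $v=an$, $w=bp$ satisfies all five conjuncts ($1\mid_p p$ via $s=1$; $p\notin\{-1,0,1\}$; $n\mid an$; $p\mid bp$; $an+bp=1$), so $\Dfrak_p\models CO(n)$. Conversely, suppose $CO(n)$ holds, witnessed by $u,v,w$. Since $u\in P_{p,0}^\pm$ we have $p\mid u$, hence $p\mid w$, hence $v=1-w\equiv1\pmod p$; and $n\mid v$ lets us write $v=an$, so $an\equiv1\pmod p$, which is incompatible with $p\mid n$. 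Thus $p\nmid n$. The only edge cases needing a line are $n=0$ — which forces $v=0$, then $w=1$, then $u\mid1$, i.e. $u=\pm1$, contradicting $T(u)$, so $0$ is correctly excluded since $p\mid0$ — and $n=\pm1$, handled by $u=p$, $v=1$, $w=0$.

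The only "obstacle" is a matter of formulation rather than a real difficulty: multiplication is absent from $\Lcal_T^*$, so the products $an$ and $bp$ in the B\'ezout identity cannot be written literally. The trick is to replace "$an$" by "an arbitrary multiple $v$ of $n$" (using $\mid$) and "$bp$" by "an arbitrary multiple $w$ of an element $u$ of $P_{p,0}^\pm$" (using $\mid$ together with the definability of $P_{p,0}^\pm$ from Lemma \ref{powers}), after which the additive atomic formula $v+w=1$ carries exactly the coprimality content. Once this is seen, the verification above is routine.
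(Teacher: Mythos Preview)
Your proof is correct and follows essentially the same approach as the paper's: both use $P_0^\pm(u)$ to pin down a nontrivial power of $p$ and then exploit ordinary divisibility together with addition to express coprimality with $p$. The paper's formula is the tighter one-variable $\exists m\,(P_0^\pm(m)\wedge n\mid m-1)$, with Euler's theorem supplying the forward witness $m=p^{\varphi(|n|)}$, whereas your three-variable version makes the B\'ezout decomposition explicit; the mathematical content is the same.
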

\begin{proof}
Consider the formula 
$$
\exists m(P_0^\pm(m)\wedge n|m-1).
$$ 
If $n\in CO_p$, then we can take $m=p^{\varphi(|n|)}$, since by Euler's theorem we know that $p^{\varphi(|n|)}$ is congruent to $1$ mod $n$. 

Conversely, if the formula is satisfied in some $\Dfrak_p$, then there exists $k\in\Z$ such that $nk=m-1$. Since $p$ divides $m$, it does not divide $n$.
\end{proof}

The next lemma defines squaring uniformly in each $CO_p$.
\begin{lemma}\label{Renia}
The collection of sets 
$$
\left\{(n,n^2)\colon n\in CO_p\right\}
$$ 
is $\Lcal_T^*$-uped in $\Dcal$. More precisely, for any integer prime $p$ we have: $n=m^2$ with $m,n\in CO_p$ if and only if $\Dfrak_p$ satisfies the following $\Lcal_T^*$-formula $\theta_{CO}(m,n)$
$$
CO(m)\wedge CO(n)\wedge\exists  a(P_0^\pm(a)\wedge m|a^2-1\wedge n|a^2 -1\wedge a^8-m\mid a^{16}-n).
$$
\end{lemma}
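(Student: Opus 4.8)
The plan is to prove the displayed biconditional directly, the engine being the identity
\[
a^{16}-n=(a^8-m)(a^8+m)+(m^2-n),
\]
valid for every value of $a$, which shows that the last conjunct of $\theta_{CO}(m,n)$, namely $a^8-m\mid a^{16}-n$, is equivalent to $a^8-m\mid m^2-n$. By the Remark following Corollary \ref{Elina}, once $P_0^\pm(a)$ is imposed the expressions $a^2,a^8,a^{16}$ are legitimate in a positive existential $\Lcal_T^*$-formula, and if $a=\pm p^h$ with $h\ge 1$ they denote the positive powers $p^{2h},p^{8h},p^{16h}$; in particular their values do not depend on the sign of the witness $a$. (The divisibility conditions, with differences like $a^2-1$, are themselves expressed positive existentially over $\Lcal_T^*$ by introducing auxiliary variables for the differences, as usual.)

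For the implication from right to left, suppose $n=m^2$ with $m,n\in CO_p$. Then $a^{16}-n=a^{16}-m^2=(a^8-m)(a^8+m)$, so the third conjunct holds for every $a\in P_{p,0}^\pm$; it remains to choose a witness satisfying $m\mid a^2-1$ and $n\mid a^2-1$, and since $n=m^2$ it is enough to arrange $m^2\mid a^2-1$. Take $a=p^h$ with $h=\varphi(|m^2|)$, where $\varphi$ is Euler's totient function, so that $h\ge 1$ (and $h=1$ when $m=\pm1$); as $\gcd(p,m^2)=1$, Euler's theorem gives $a^2=p^{2h}\equiv 1\pmod{m^2}$, as required. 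Hence $\Dfrak_p$ satisfies $\theta_{CO}(m,n)$.

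For the converse, assume $\Dfrak_p$ satisfies $\theta_{CO}(m,n)$. By Lemma \ref{coprime} we get $m,n\in CO_p$, and by Lemma \ref{powers} the witness has the form $a=\pm p^h$ with $h\ge 1$, so $a^2=p^{2h}$, $a^8=p^{8h}$, $a^{16}=p^{16h}$. The identity above together with the third conjunct gives $p^{8h}-m\mid m^2-n$. The first two conjuncts give $m\mid p^{2h}-1$ and $n\mid p^{2h}-1$, and since $p^{2h}-1>0$ (because $p\ge 2$, $h\ge 1$) this forces $|m|\le p^{2h}-1$ and $|n|\le p^{2h}-1$, whence
\[
|m^2-n|\le (p^{2h}-1)^2+(p^{2h}-1)=p^{4h}-p^{2h}<p^{8h}-(p^{2h}-1)\le |p^{8h}-m|.
\]
Since $p^{8h}-m\neq 0$, a divisor cannot exceed its nonzero multiple in absolute value, so $m^2-n=0$, i.e. $n=m^2$ with $m,n\in CO_p$.

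I expect the only real obstacle to be a matter of care rather than depth: one must track the shorthand so that $a^8,a^{16}$ are genuinely $p^{8h},p^{16h}$ (which makes the sign of $a$ irrelevant and lets the crude inequality $p^{4h}-p^{2h}<p^{8h}-p^{2h}+1$ close the argument), and one must ensure the witness built in the easy direction lies in $P_{p,0}^\pm$, i.e. has \emph{positive} exponent — which is why $h=\varphi(|m^2|)\ge 1$ is used, even for $m=\pm 1$. In contrast with the earlier lemma on $\bar\theta_P^\varepsilon$, no exceptional pairs arise here, precisely because only even powers of the witness occur, so signs wash out and the only constraint on $a$ that matters is $m^2\mid a^2-1$ (resp. $m\mid a^2-1$, $n\mid a^2-1$).
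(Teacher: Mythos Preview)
The proposal is correct and follows essentially the same approach as the paper: both hinge on the identity $a^{16}-n=(a^8-m)(a^8+m)+(m^2-n)$ to reduce the last conjunct to $a^8-m\mid m^2-n$, and then use the size bounds coming from $m,n\mid a^2-1$ to force $m^2-n=0$. Your forward direction is in fact slightly cleaner than the paper's --- the single choice $h=\varphi(|m^2|)$ avoids the paper's case split into $|m|=1$ versus $|m|>1$ (where the paper takes $a=p^{\phi(m)\phi(n)/2}$) --- and you are more careful than the paper about the sign of the witness: the paper writes ``$a\ge 2$'' where strictly one should work with $|a|$ or, as you do, observe that only even powers of $a$ occur.
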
 
\begin{proof}
If $n=m^2$ and $m,n\in CO_p$ we have two cases; if $|m|=n=1$ then any $a\in P_{p,0}^\pm$ works, and if $T(m)$ then $n>2$ and we can choose $a=p^{\phi(m)\phi(n)/2}$, where $\phi$ stands for Euler's function (recall that $\phi(n)$ is even for $n>2$).

Suppose that $\Dfrak_p$ satisfies $\theta_{CO}(m,n)$ for some $m,n\in CO_p$. Since $a\in P_{p,0}^\pm$ we have $a\geq2$. Since $m$ and $n$ divide $a^2-1$, we have $|m|<a^2$ and $|n|<a^2$. Since $a^8-m$ divides 
$$
a^{16}-n=a^{18}-m^2+m^2-n,
$$ 
we have:
\begin{enumerate}
\item $a^8-m$ divides $m^2-n$;
\item $|m^2-n|<a^4+a^2$ (since $|m|<a^2$ and $|n|<a^2$); and
\item $|a^8-m|>a^8-a^2$ (since $|m|<a^2$).
\end{enumerate}
By 1, we have that either $m^2-n=0$ or $|a^8-m|\leq |m^2-n|$. For the sake of contradiction, suppose that the latter is true. Then we have
$$
a^8-a^2<|a^8-m|\leq|m^2-n|<a^4+a^2
$$
hence, since $a\ge 2$ we get
$$
a^8<a^4+2a^2< a^4+a^4<a^8
$$
which is impossible. Therefore $m^2=n$.
\end{proof}




\begin{lemma}\label{CtimesP} 
The collection of sets 
$$
\{(x,y,z)\colon z=xy\textrm{ and }x\in CO_p\textrm{ and }y\in P_p^\pm\}
$$ 
is $\Lcal_T^*$-uped in $\Dcal$. More precisely, for any integer prime $p$, we have: $x=mn$ with $m\in CO_p$ and $n\in P_p^\pm$, if and only if $\Dfrak_p$ satisfies the formula $\rho_{CP}(m,n,x)$
$$
\begin{aligned}
& \hspace{80pt}(n=-1\wedge m=-x)\vee(n=1\wedge m=x)\vee\\
& \left(CO(m)\wedge  P_0^\pm(n)\wedge
\exists a,b (\theta_{CO}(m,a)\wedge\theta_P^\pm(n,b)\wedge\theta_{CO}(m+n,a+2x+b))\right).
\end{aligned}
$$
\end{lemma}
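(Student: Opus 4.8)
The plan is to verify the two implications separately, the right-to-left direction being a routine check and the left-to-right direction requiring an analysis of $p$-adic valuations analogous to those already carried out in the preceding lemmas. For the right-to-left direction, one takes $(m,n,x)$ satisfying one of the two trivial disjuncts (with $n=\pm1$) — in which case $x=mn$ is immediate — or satisfying the third disjunct; there, $m\in CO_p$, $n=\varepsilon p^t\in P_p^\pm$, and one produces witnesses $a=m^2$, $b=n^2$ (using Lemmas \ref{Renia} and \ref{Elina}, so that $\theta_{CO}(m,a)$ and $\theta_P^\pm(n,b)$ hold), and then checks that $a+2x+b=m^2+2mn+n^2=(m+n)^2$, so $\theta_{CO}(m+n,(m+n)^2)$ holds provided $m+n\in CO_p$, which is clear since $p\mid n$ and $p\nmid m$. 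So the third disjunct is satisfiable whenever $x=mn$ with $m\in CO_p$ and $n\in P_p^\pm\setminus\{\pm1\}$.

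For the left-to-right direction, suppose $\Dfrak_p$ satisfies $\rho_{CP}(m,n,x)$. If one of the first two disjuncts holds we are immediately done with $n=\pm1\in P_p^\pm$. Otherwise, from the third disjunct we have $m\in CO_p$ (by Lemma \ref{coprime}), $n\in P_{p,0}^\pm$ (by Lemma \ref{powers}), and witnesses $a,b$ with $\theta_{CO}(m,a)$, $\theta_P^\pm(n,b)$, $\theta_{CO}(m+n,a+2x+b)$. By Lemma \ref{Renia}, $\theta_{CO}(m,a)$ forces $a=m^2$ and $m\in CO_p$; by Corollary \ref{Elina}, $\theta_P^\pm(n,b)$ forces $b=n^2$ (writing $n=\pm p^h$, $b=p^{2h}$, and indeed $n^2=p^{2h}$); and by Lemma \ref{Renia} again, $\theta_{CO}(m+n,a+2x+b)$ forces $m+n\in CO_p$ and $a+2x+b=(m+n)^2$. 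Substituting $a=m^2$ and $b=n^2$ gives $m^2+2x+n^2=(m+n)^2=m^2+2mn+n^2$, hence $2x=2mn$ and therefore $x=mn$, with $m\in CO_p$ and $n\in P_p^\pm$ as required.

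The only subtlety — and the step I expect to need the most care — is the bookkeeping around the ``exceptional'' small-characteristic pairs that appear in the analysis of $\bar\theta_P^\pm$ and hence (through Corollary \ref{Elina}) could in principle affect what $\theta_P^\pm(n,b)$ pins down: one must confirm that $\theta_P^\pm$, as defined in Corollary \ref{Elina} with the extra clauses $n\ne-2\wedge n\ne-3\wedge n\ne-8$, really does force $b=n^2$ on the nose for every prime $p$, including $p=2$ and $p=3$, so that the identity $a+2x+b=(m+n)^2$ can be read off cleanly. Since this has already been packaged into the statement of Corollary \ref{Elina}, the argument above may invoke it as a black box, and no further $p$-adic casework is needed here; the proof is then a short substitution.
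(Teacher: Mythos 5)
Your proof is correct and follows exactly the paper's (much terser) argument: the paper's entire proof consists of observing that $p\nmid m$ and $n\in P_{p,0}^\pm$ imply $p\nmid m+n$, and that $(m+n)^2=a+2mn+b$ with $a=m^2$, $b=n^2$, which is precisely the identity you verify in both directions. Your concern about the exceptional pairs is rightly resolved by treating Corollary \ref{Elina} as a black box, since the clauses $n\ne-2\wedge n\ne-3\wedge n\ne-8$ were added there exactly to make $\theta_P^\pm$ pin down $b=n^2$ for all primes.
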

\begin{proof} Note that if $p$ does not divide $m$ and $n\in P_{p,0}^\pm$ then $p$ does not divide $m+n$, and note that $(m+n)^2=a+2mn+b$.
\end{proof}

We are now ready to show that squaring is $\Lcal_T^*$-uped in $\Dcal$.

\begin{lemma}\label{Renia2}
For any integer prime $p$ and for any $m,n\in \Z$ the following holds: $n=m^2$ if and only if $\Dfrak_p$ satisfies
$$
\begin{aligned}
&\exists a,b,u,v (P(a)\wedge P(b)\wedge CO(u)\wedge CO(v)\wedge\rho_{CP}(u,a,m)\wedge\\ 
&\hspace{52pt}\rho_{CP}(v,b,n)\wedge \theta_P^\pm(a,b)\wedge \theta_{CO}(u,v))
\end{aligned}
$$
\end{lemma}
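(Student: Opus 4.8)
The plan is to obtain the statement by assembling the pieces constructed in this subsection, since each conjunct of the displayed formula has already been given a known meaning. Recall: $P(a)$ defines $P_p^\pm$ in $\Dfrak_p$ (Lemma \ref{Tonia}); $CO(u)$ defines $CO_p=\{n\in\Z\colon p\nmid n\}$ (Lemma \ref{coprime}); $\rho_{CP}(u,a,m)$ holds in $\Dfrak_p$ exactly when $m=ua$ with $u\in CO_p$ and $a\in P_p^\pm$ (Lemma \ref{CtimesP}); $\theta_P^\pm(a,b)$ holds exactly when $(a,b)=(\pm p^h,p^{2h})$ for some $h\in\N$, so in particular $b=a^2$ (Corollary \ref{Elina}); and $\theta_{CO}(u,v)$ holds exactly when $v=u^2$ with $u,v\in CO_p$ (Lemma \ref{Renia}). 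Hence the displayed formula is satisfied by $\Dfrak_p$ if and only if there exist $u,v\in CO_p$ and $a,b\in P_p^\pm$ with $m=ua$, $n=vb$, $b=a^2$ and $v=u^2$. (Note that the conjuncts $P(a),P(b),CO(u),CO(v)$ are formally redundant given $\theta_P^\pm$ and $\theta_{CO}$, but keeping them causes no harm.)

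For the forward implication, assume $n=m^2$ and suppose first $m\neq 0$. Use unique factorization in $\Z$ to write $m=\varepsilon p^{h}u$ with $\varepsilon\in\{\pm1\}$, $h\in\N$ and $p\nmid u$; set $a=\varepsilon p^h\in P_p^\pm$, so $u\in CO_p$ and $m=ua$, which gives $\rho_{CP}(u,a,m)$. Now put $b=p^{2h}=a^2\in P_p^\pm$ and $v=u^2$; since $p\nmid u$ we have $p\nmid v$, so $v\in CO_p$, and $n=m^2=p^{2h}u^2=vb$, which gives $\rho_{CP}(v,b,n)$, while $\theta_P^\pm(a,b)$ and $\theta_{CO}(u,v)$ hold by construction. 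Thus $a,b,u,v$ are witnesses for the formula. The degenerate case $m=0$ (which forces $n=0$) is handled trivially, by appending the disjunct $m=0\wedge n=0$ to the formula if one wants it to hold literally there as well; in the applications to multiplication this zero case is absorbed separately, so one may also just assume $m\neq0$.

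For the reverse implication, suppose $\Dfrak_p$ satisfies the formula and fix witnesses $a,b,u,v$. From $\rho_{CP}(u,a,m)$ and $\rho_{CP}(v,b,n)$ we get $m=ua$ and $n=vb$; from $\theta_P^\pm(a,b)$ we get $b=a^2$; from $\theta_{CO}(u,v)$ we get $v=u^2$. Therefore $n=vb=u^2a^2=(ua)^2=m^2$, as required. The argument is purely combinatorial bookkeeping once the earlier lemmas are in hand, so there is no genuine obstacle; the only points needing a little care are the tracking of signs (which is why one works with $P_p^\pm$ rather than $P_p^>$) and the vanishing case $m=0$, where the required witness $u=m/a$ would be $0\notin CO_p$.
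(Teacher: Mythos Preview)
Your proof is correct and follows the same approach as the paper: factor $m$ into its $p$-part $a$ and $p$-coprime part $u$, then take $b=a^2$ and $v=u^2$ as witnesses (the paper's one-line hint ``choose $a=p^{\ord_p m}$ and $u=m/a$ and do the same for $n$'' is exactly this). Your write-up is in fact more careful than the paper's, since you spell out the reverse implication and flag the degenerate case $m=0$, where no witness $u\in CO_p$ can exist; the paper's proof silently ignores this edge case.
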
 
\begin{proof}
Choose 
$$
a=p^{\ord_p m}\qquad\textrm{and}\qquad u=\frac{m}{a}
$$ 
and do the same for $n$.
\end{proof}

It is standard to define multiplication using squaring: for any $m,n,h\in \Z$ the following holds:
$$
h=m\cdot n \textrm{ if and only if } (m+n)^2=m^2+2h+n^2.
$$

Hence multiplication is $\Lcal_T^*$-uped in $\Dcal$ and Item \ref{1D} of Theorem \ref{Teoremon1} follows.

\section{Pell equations uniformly}\label{SecPell}

If $x$ and $a$ are polynomials in $z$, we will denote by $x(a)$ the composition $x\circ a$.

Let us first remind some known facts about Pell equations. 

\begin{theorem}\label{Katerina}  
Let $F$ be a field of characteristic $p\ne 2$ and let $z$ be a variable. Let $a\in F[z]\smallsetminus F$. Any solution $(X,Y)=(x,y)$ in $F[z]$ of the equation
\begin{equation}\label{Pell}
X^2-(a^2-1)Y^2=1
\end{equation}
is of the form $(x,y)=(\pm x_n(a),y_n(a))$ where the pairs $(x_n(z),y_n(z))$ are defined by 
\begin{equation}
x_n(z)+\sqrt{z^2-1}y_n(z)=\left(z+\sqrt{z^2-1}\right)^n
\end{equation} 
by separating rational and irrational parts over $F(z)$.

Moreover, for any $m,n\in \Z$ we have 
\begin{enumerate}
\item $x_{m+n}(a)=x_m(a)x_n(a)-(a^2-1)y_m(a)y_n(a)$;
\item $y_{m+n}(a)=x_m(a)y_n(a)+x_n(a)y_m(a)$;
\item The integer $m$ divides in $\Z$ the integer $n$ if and only if the polynomial $y_m(a)$ divides in $F[z]$ the polynomial $y_n(a)$;
\item If $p\ne 0$ then for any $s\in \Z$ we have: $n= \pm mp^s$ if and only if $x_n(a)=x_m^{p^s}(a)$.
\item $y_n(a)$ is non constant if and only if $n\notin\{-1,0,1\}$.
\end{enumerate}
\end{theorem}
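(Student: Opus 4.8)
The plan is to realize the Pell equation as a statement about norm-one elements in a quadratic extension of $F(z)$. Since $\mathrm{char}\,F\ne 2$, the polynomial $z^2-1=(z-1)(z+1)$ has simple roots, hence is not a square in $F[z]$ and (by unique factorization) not a square in $F(z)$; so $L=F(z)(w)$ with $w=\sqrt{z^2-1}$ is a genuine quadratic extension, with norm $N(u+vw)=u^2-(z^2-1)v^2$ and nontrivial $F(z)$-automorphism $\overline{u+vw}=u-vw$. The element $z+w$ has norm $z^2-(z^2-1)=1$, and defining $x_n(z),y_n(z)$ by $x_n(z)+w\,y_n(z)=(z+w)^n$, the recursion $x_{n+1}=zx_n+(z^2-1)y_n$, $y_{n+1}=x_n+zy_n$ (with $x_0=1$, $y_0=0$, $x_{-n}=x_n$, $y_{-n}=-y_n$) shows $x_n,y_n\in F[z]$, $\deg_z x_n=|n|$ and $\deg_z y_n=|n|-1$ for $n\ne 0$, and multiplicativity of $N$ gives $x_n^2-(z^2-1)y_n^2=1$. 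For non-constant $a\in F[z]$ one checks identically that $a^2-1$ is not a square in $F(z)$ (else $a^2-1=g^2$ in $F[z]$, forcing $a\pm g\in F^{*}$, hence $a\in F$), so the substitution $z\mapsto a$ is legitimate; writing $\theta=\sqrt{a^2-1}$ and $\varepsilon=a+\theta$, one gets the solutions $(\pm x_n(a),y_n(a))$ of \eqref{Pell}, with $\varepsilon^{n}=x_n(a)+\theta\,y_n(a)$ a norm-one unit.

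The substantive step is the converse. Suppose $(x,y)\in F[z]^{2}$ solves \eqref{Pell}. Two preliminary observations: if $y\ne 0$ then $x^{2}=1+(a^{2}-1)y^{2}$ forces $\deg_z x=\deg_z a+\deg_z y$; and if $y$ were a nonzero constant then $(x-ay)(x+ay)=1-y^{2}$ would be a nonzero constant, so both factors would be constants and $2ay$ constant — impossible — whence $y\in\{1,-1\}$ and $x=\pm a$, i.e. $(x,y)$ is one of $\pm\varepsilon^{\pm1}$. For the general case I would run a descent on $\deg_z x$: given $\alpha=x+y\theta$ with $N(\alpha)=1$ and $\deg_z y\ge 1$, after possibly replacing $\alpha$ by $\bar\alpha$ (same $x$-part; this preserves the family $\pm\varepsilon^{\Z}$ since $\overline{\varepsilon^{\,n}}=\varepsilon^{-n}$) we may normalize so that $\mathrm{lc}(x)=\mathrm{lc}(a)\,\mathrm{lc}(y)$. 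Then $\beta:=\alpha(a-\theta)=\bigl(xa-(a^{2}-1)y\bigr)+(ya-x)\theta$ has norm one, and since $ya$ and $x$ share degree and leading coefficient the $\theta$-part $ya-x$ drops in degree; in fact, from $(ya-x)(ya+x)=y^{2}-1$ and $\deg_z(ya+x)=\deg_z x$ one gets $\deg_z(ya-x)=\deg_z y-\deg_z a$, so the new $x$-part has degree $\deg_z y<\deg_z x$. Iterating down to the base case $\deg_z x=0$ (where $y=0$, $x=\pm1$) expresses $\alpha=\pm\varepsilon^{\,n}$. This gives the normal form; since moreover $\deg_z x_n(a)=|n|\deg_z a$, the unit $\varepsilon$ has infinite order, so $n\mapsto(x_n(a),y_n(a))$ is injective, a fact used repeatedly below.

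The five assertions then follow formally. Items 1 and 2 come from comparing the $1$- and $w$-components of $(z+w)^{m+n}=(z+w)^{m}(z+w)^{n}$ and specializing $z\mapsto a$. Item 5 is the degree count $\deg_z y_n(a)=(|n|-1)\deg_z a$ for $n\ne 0$, positive exactly when $|n|\ge 2$ (with $y_0=0$, $y_{\pm1}=\pm1$). For item 3, expanding $(x_m(a)+\theta y_m(a))^{k}$ shows every term with an odd power of $\theta$ carries a factor $y_m(a)$, so $y_m(a)\mid y_{km}(a)$; conversely, for $n=qm+r$ with $0\le r<|m|$, identity 2 together with $y_m(a)\mid y_{qm}(a)$ and the coprimality of $x_{qm}(a),y_{qm}(a)$ (immediate from \eqref{Pell}) gives $y_m(a)\mid y_r(a)$, and since $\deg_z y_r(a)<\deg_z y_m(a)$ for $0<|r|<|m|$ while $y_m(a)\notin F$ for $m\notin\{-1,0,1\}$, this forces $r=0$. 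For item 4, the Frobenius identity $(x_m+wy_m)^{p^{s}}=x_m^{p^{s}}+w^{p^{s}}y_m^{p^{s}}$ in characteristic $p$, with $w^{p^{s}}=(z^{2}-1)^{(p^{s}-1)/2}\,w$, yields $x_{mp^{s}}(z)=x_m(z)^{p^{s}}$ and hence $x_{\pm mp^{s}}(a)=x_m(a)^{p^{s}}$; conversely, if $x_n(a)=x_m(a)^{p^{s}}$ then $\bigl(x_m(a)^{p^{s}},\,(a^{2}-1)^{(p^{s}-1)/2}y_m(a)^{p^{s}}\bigr)$ is also a solution of \eqref{Pell}, two solutions with equal first coordinate have equal second coordinate up to sign, and injectivity of $n\mapsto(x_n(a),y_n(a))$ forces $n=\pm mp^{s}$.

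The main obstacle is the descent in the second paragraph: the normalization $\mathrm{lc}(x)=\mathrm{lc}(a)\,\mathrm{lc}(y)$ and the resulting exact degree drop $\deg_z(ya-x)=\deg_z y-\deg_z a$ are what make multiplication by $a-\theta$ strictly decrease $\deg_z x$ and the induction terminate. Everything else — items 1–5 and the setup — is routine manipulation of the norm form, the Frobenius, and degrees.
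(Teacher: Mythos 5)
Your proposal is correct and complete. Note, however, that the paper does not actually prove Theorem \ref{Katerina}: its ``proof'' is a one-line citation to \cite{PheidasZahidi1}. What you have written is a self-contained reconstruction of the standard argument behind that reference: realize the Pell solutions as norm-one elements of the quadratic extension $F(z)(\sqrt{a^2-1})$, show the fundamental solution $\varepsilon=a+\sqrt{a^2-1}$ generates everything by a descent on $\deg_z x$ (multiplication by $\varepsilon^{-1}=a-\theta$ after normalizing the leading coefficient so that the $\theta$-part drops in degree), and then read off items 1--5 from the group law, degree counts, and the Frobenius identity $w^{p^s}=(z^2-1)^{(p^s-1)/2}w$. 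All the steps check out: the normalization $\mathrm{lc}(x)=\mathrm{lc}(a)\mathrm{lc}(y)$ is available because $x^2=1+(a^2-1)y^2$ forces $\mathrm{lc}(x)^2=\mathrm{lc}(a)^2\mathrm{lc}(y)^2$; the identity $\deg_z(ya+x)=\deg_z x$ uses $\mathrm{char}\,F\ne2$ (the leading coefficients add to $2\,\mathrm{lc}(x)\ne0$); and the converse directions of items 3 and 4 correctly use coprimality of $x_{qm},y_{qm}$ and injectivity of $n\mapsto(x_n(a),y_n(a))$. One point you pass over quickly: the formula $\deg_z(ya-x)=\deg_z y-\deg_z a$ is negative when $1\le\deg_z y<\deg_z a$, but since $ya-x\ne0$ (as $y^2\ne1$) this is a contradiction rather than a descent step, showing no solution has $y$ in that degree range --- consistent with $\deg_z y_n(a)=(|n|-1)\deg_z a$. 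So the argument is sound as written; it simply proves directly what the paper outsources to \cite{PheidasZahidi1}.
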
 
\begin{proof} 
See \cite{PheidasZahidi1}. 
\end{proof}

Theorem \ref{Katerina} tells us essentially that the structure of the set of solutions of the Pell equation \eqref{Pell} does not depend on the parameter $a$, not only as a group, but also as a structure with the relation of divisibility and the function that takes $p^s$-th powers. 

\begin{notation} 
We consider the following two groups:
\begin{enumerate} 
\item $(\Z\times\mu_2,\oplus)$, where $\mu_2$ is the multiplicative group with two elements, has its law defined by
$$
(m,v)\oplus(n,w)=(wm+vn,vw).
$$
\item If $F$ is a field of characteristic $p\ne2$, then 
$$
\Sigma_a(F)\subseteq F[z]\times F[z]
$$ 
denotes the set of solutions of 
$$
X^2-(a^2-1)Y^2=1,
$$ 
where $a\in F[z]\smallsetminus F$. It is well known that the operation
$$
(x,y)\oplus(x',y')=(xx'-(a^2-1)yy',xy'+x'y)
$$
defines a group law on $\Sigma_a(F)$.
\end{enumerate}
\end{notation}

Let us define the class $\Qcal$ as the set 
$$
\{\Qfrak_p\colon p\textrm{ is prime}\}
$$ 
where the $\Lcal_T^*$-structures $\Qfrak_p$ are defined as follows
\begin{itemize}
\item the base set is $\Z\times\mu_2$;
\item $0$ is interpreted as $(0,1)$;
\item $1$ is interpreted as $(1,1)$;
\item $+$ is interpreted as $\oplus$;
\item $(u,v)\mid (x,y)$ is interpreted as ``$u$ divides $x$'';
\item $R((u,v),(x,y))$ is interpreted as ``$u\mid_p x$;
\item $T((u,v))$ is interpreted as ``$u$ is not in $\{-1,0,1\}$''.
\end{itemize}

Let $\beta$ be a unary predicate symbol interpreted in each $\Lcal_T^*$-structure $\Qfrak_p$ as 
\begin{center}
$\beta^{\Qfrak_p}((v_1,v_2))$ if and only if $v_2=1$.
\end{center}

\begin{lemma}\label{papa}
The symbol $\beta$ is $\Lcal_T^*$-uped in $\Qcal$ by 
$$
\zeta(v)\colon \exists w (v=w+w\vee v=w+w+1)
$$ 
\end{lemma}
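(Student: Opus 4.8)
Proof plan for Lemma \ref{papa}. The key observation is that $\zeta(v)$ only mentions the additive reduct of the language (the operation $\oplus$ and the constant $1=(1,1)$), so the value of $p$ plays no role, and the statement reduces to an explicit description of the image of the ``doubling'' map on the group $\Z\times\mu_2$. The plan is to compute this image directly from the twisted group law $(m,\varepsilon)\oplus(n,\delta)=(\delta m+\varepsilon n,\varepsilon\delta)$ given in the Notation, and then compare with the interpretation of $\beta$.

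First I would compute $w\oplus w$ for an arbitrary $w=(m,\varepsilon)\in\Z\times\mu_2$: the group law gives $w\oplus w=(2\varepsilon m,\varepsilon^2)=(2\varepsilon m,1)$, using that every element of $\mu_2$ squares to the identity of $\mu_2$. As $m$ ranges over $\Z$ (for either value of $\varepsilon$) the first coordinate $2\varepsilon m$ ranges over all even integers, so $\{w\oplus w:w\in\Z\times\mu_2\}=2\Z\times\{1\}$. Next I would compute $w\oplus w\oplus 1=(2\varepsilon m,1)\oplus(1,1)=(2\varepsilon m+1,1)$, so $\{w\oplus w\oplus 1:w\in\Z\times\mu_2\}=(2\Z+1)\times\{1\}$. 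Taking the union, $\Qfrak_p$ satisfies $\zeta((v_1,v_2))$ if and only if $(v_1,v_2)\in\bigl(2\Z\cup(2\Z+1)\bigr)\times\{1\}=\Z\times\{1\}$, that is, if and only if $v_2=1$; and this is precisely $\beta^{\Qfrak_p}((v_1,v_2))$.

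Finally I would note that $\zeta$ is manifestly positive existential and contains no reference to $p$, so the same formula works in every member of $\Qcal$; hence $\beta$ is $\Lcal_T^*$-uped in $\Qcal$ by $\zeta$. There is essentially no obstacle here; the only point that requires a moment of care is to use the twisted product law on $\Z\times\mu_2$ rather than the direct-product law, and to keep track of the $\mu_2$-component, which forces the second coordinate of every doubled (or doubled-plus-one) element to equal $1$ — exactly the condition defining $\beta$.
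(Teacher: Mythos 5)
Your proposal is correct and matches the paper's own proof: both compute $w\oplus w=(2\varepsilon m,1)$ from the twisted group law, observe that adding $(1,1)$ yields the odd first coordinates, and conclude that $\zeta$ holds exactly when the $\mu_2$-component is $1$. No further comment is needed.
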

\begin{proof}
The formula $\zeta(v)$ is satisfied in $\Qfrak_p$ if and only if there exist $w_1\in\Z$ and $w_2\in\mu_2$ such that
$$
(v_1,v_2)=2(w_1,w_2)=(2w_1w_2,1)
$$
or
$$
(v_1,v_2)=2(w_1,w_2)+(1,1)=(2w_1w_2+1,1), 
$$
and the latter happens if and only if $v_2=1$. 
\end{proof}

It is well known that $\Sigma_a(F)$ is isomorphic to the additive group $\Z\times\frac{\Z}{2\Z}$. We will use this fact in the following form:

\begin{lemma}\label{grupo}
The map
$$
\begin{array}{cccc}
\xi_{a,F}\colon&\Z\times\mu_2&\rightarrow&\Sigma_a(F)\\
&(n,\varepsilon)&\mapsto&(\varepsilon x_n,y_n).
\end{array}
$$
is an isomorphism of groups. 
\end{lemma}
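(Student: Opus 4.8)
The plan is the standard three-step check that a map between two groups is an isomorphism: (i) $\xi_{a,F}$ is well defined, i.e. its values lie in $\Sigma_a(F)$; (ii) $\xi_{a,F}$ is a group homomorphism; (iii) $\xi_{a,F}$ is bijective. Throughout, $x_n$ and $y_n$ abbreviate the compositions $x_n(a)$ and $y_n(a)$, which are defined for every $n\in\Z$ and obey the relations recorded in Theorem~\ref{Katerina}. Step (ii) is the computational core, and the only point at which the $\mu_2$-coordinate of the group law has to be tracked with care; steps (i) and (iii) are short.

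For (i): replacing $\sqrt{z^2-1}$ by $-\sqrt{z^2-1}$ in the defining identity $x_n(z)+\sqrt{z^2-1}\,y_n(z)=(z+\sqrt{z^2-1})^n$ gives $x_n(z)-\sqrt{z^2-1}\,y_n(z)=(z-\sqrt{z^2-1})^n$ (the odd-degree terms of the binomial expansion change sign), so multiplying the two identities yields $x_n(z)^2-(z^2-1)y_n(z)^2=(z^2-(z^2-1))^n=1$. Substituting $z\mapsto a$ and using $\varepsilon^2=1$ gives $(\varepsilon x_n)^2-(a^2-1)y_n^2=1$, hence $\xi_{a,F}(n,\varepsilon)\in\Sigma_a(F)$.

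For (ii): one must show $\xi_{a,F}\bigl((m,v)\oplus(n,w)\bigr)=\xi_{a,F}(m,v)\oplus\xi_{a,F}(n,w)$ for all $(m,v),(n,w)$ in $\Z\times\mu_2$. By the definition of $\oplus$ on $\Z\times\mu_2$ the left side is $\xi_{a,F}(wm+vn,vw)=\bigl(vw\,x_{wm+vn},\,y_{wm+vn}\bigr)$, while by the definition of $\oplus$ on $\Sigma_a(F)$ the right side is $(v x_m,y_m)\oplus(w x_n,y_n)$, a pair whose two coordinates are assembled from $x_mx_n$, $y_my_n$, $x_my_n$, $x_ny_m$ carrying the signs $v,w$. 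To identify the two pairs I would first record the parity relations $x_{-k}=x_k$ and $y_{-k}=-y_k$ (both immediate from the defining identity via the sign change of the odd-degree terms noted above), so that $x_{wm}=x_m$, $y_{wm}=w\,y_m$, $x_{vn}=x_n$, $y_{vn}=v\,y_n$; then apply the addition formulas of Theorem~\ref{Katerina}(1)--(2) with the integer arguments $wm$ and $vn$ and use $(vw)^2=1$. The sign factors then collapse and both coordinates of the right side coincide with those of $\xi_{a,F}(wm+vn,vw)$. The only genuine difficulty here is this sign bookkeeping; once the parity relations are available the verification is purely mechanical, and it is exactly the well-known isomorphism $\Sigma_a(F)\cong\Z\times\frac{\Z}{2\Z}$ made explicit in the form needed below.

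For (iii): surjectivity is immediate from the first assertion of Theorem~\ref{Katerina}, which says every element of $\Sigma_a(F)$ is of the form $(\pm x_n,y_n)=\xi_{a,F}(n,\pm1)$. For injectivity, since by (ii) the map is a homomorphism it suffices to compute its kernel. The identity of $\Sigma_a(F)$ is $(1,0)=(x_0,y_0)$; if $\xi_{a,F}(n,\varepsilon)=(1,0)$ then $y_n=0$, and since $y_0=0$, $y_{\pm1}=\pm1\ne0$, and $y_n$ is non-constant (hence nonzero) for $n\notin\{-1,0,1\}$ by Theorem~\ref{Katerina}(5), we must have $n=0$; then $\varepsilon=\varepsilon x_0=1$. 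Thus $\ker\xi_{a,F}$ is trivial, $\xi_{a,F}$ is injective, and the proof is complete.
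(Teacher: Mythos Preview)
Your proof is correct. The paper does not actually prove this lemma: immediately before stating it, the authors write ``It is well known that $\Sigma_a(F)$ is isomorphic to the additive group $\Z\times\frac{\Z}{2\Z}$. We will use this fact in the following form,'' and then state the lemma with no proof. So you have supplied a full argument where the paper gives none; your three-step verification (well-definedness via the conjugate identity, the homomorphism check using the parity relations $x_{-k}=x_k$, $y_{-k}=-y_k$ together with the addition formulas of Theorem~\ref{Katerina}(1)--(2), and bijectivity via Theorem~\ref{Katerina} and a kernel computation) is exactly the standard route and is carried out correctly.
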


\begin{notation}
Let $F$ be a field of characteristic $p\ne2$ and $a\in F[z]$ non-constant. We consider the following $\Lcal_T^*$-structure
$$
\Gfrak_a(F)=\left(\Sigma_a(F);(1,0),(a,1),\oplus,\mid,\tilde R,\tilde T\right)
$$
where
\begin{itemize}
	\item $(x,y)\mid(u,v)$ means ``$y$ divides $v$'';
	\item $\tilde R((x,y),(u,v))$ means ``there exists $s\in\Z$ such that $x^{p^s}=u$''
	\item $\tilde T(x,y)$ means ``$y$ is not a constant''.
\end{itemize}
\end{notation}

\begin{lemma}\label{NasimNb} For any field $F$ of characteristic $p\ne2$, and for each $a\in F[z]\smallsetminus F$, the $\Lcal_T^*$-structures $\Gfrak_a(F)$ and $\Qfrak_p$ are isomorphic through $\xi_{a,F}$.
\end{lemma}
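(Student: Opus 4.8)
The plan is to lean on Lemma \ref{grupo}, which already does most of the work, and then to check the non-group part of the structure by hand. First I would record that the interpretations of $+$ in $\Qfrak_p$ and in $\Gfrak_a(F)$ are precisely the group laws $\oplus$ on $\Z\times\mu_2$ and on $\Sigma_a(F)$; hence by Lemma \ref{grupo} the map $\xi_{a,F}$ is a bijection commuting with the interpretation of $+$, and, being a group isomorphism, it sends the identity $(0,1)=0^{\Qfrak_p}$ to the identity $(1,0)=0^{\Gfrak_a(F)}$, while $\xi_{a,F}(1,1)=(x_1(a),y_1(a))=(a,1)=1^{\Gfrak_a(F)}$ because $x_1(z)=z$ and $y_1(z)=1$. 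Thus $\xi_{a,F}$ is already a bijective $\Lcal_T^*$-morphism, and to get an $\Lcal_T^*$-isomorphism it remains only to prove that it reflects each relation symbol: that is, for all $(m,\varepsilon),(n,\delta)\in\Z\times\mu_2$ and for each of $\mid$, $R$, $T$, the relation holds in $\Qfrak_p$ if and only if its $\xi_{a,F}$-image holds in $\Gfrak_a(F)$ (reflection of equality being automatic from injectivity).

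For $\mid$ I would argue that $(m,\varepsilon)\mid^{\Qfrak_p}(n,\delta)$ says $m\mid n$ in $\Z$, whereas $\xi_{a,F}(m,\varepsilon)\mid^{\Gfrak_a(F)}\xi_{a,F}(n,\delta)$ compares second coordinates and says $y_m(a)\mid y_n(a)$ in $F[z]$; these agree by item (3) of Theorem \ref{Katerina}, reducing negative indices to positive ones via $y_{-k}=-y_k$ and handling $m=0$ directly from $y_0=0$. For $T$, $T^{\Qfrak_p}(m,\varepsilon)$ says $m\notin\{-1,0,1\}$ and $\tilde T(\xi_{a,F}(m,\varepsilon))$ says $y_m(a)$ is non-constant, and these agree by item (5) of Theorem \ref{Katerina}. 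For $R$, $R^{\Qfrak_p}((m,\varepsilon),(n,\delta))$ says $m\mid_p n$, while $\tilde R(\xi_{a,F}(m,\varepsilon),\xi_{a,F}(n,\delta))$ compares first coordinates and says that $(\varepsilon x_m(a))^{p^s}=\delta x_n(a)$ for some $s$; here I would use that $p$ is odd, so $\varepsilon^{p^s}=\varepsilon$ and the equation becomes $\varepsilon\,x_m(a)^{p^s}=\delta\,x_n(a)$, and then invoke item (4) of Theorem \ref{Katerina} to determine exactly when $x_m^{p^s}(a)=x_n(a)$ (and, symmetrically, when $x_m(a)=x_n^{p^s}(a)$), noting that $x_k(a)\ne 0$ for every $k$ since $a\notin F$, so that the sign of the first coordinate of an element of $\Sigma_a(F)$ is well defined.

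I expect the only genuine obstacle to be the bookkeeping of the $\mu_2$-component in the case of the symbol $R$: one must check carefully that the sign $\varepsilon$ is faithfully recorded by the sign of the first coordinate $\varepsilon x_m(a)$ of $\xi_{a,F}(m,\varepsilon)$, and that this is compatible both with the interpretation of $R$ in $\Qfrak_p$ and with item (4) of Theorem \ref{Katerina}. The place where the hypothesis $p\ne 2$ is really used is exactly here: $p^s$ being odd forces raising to the $p^s$-th power to fix $\pm 1$, and an equality of the form $x_m(a)=-x_n^{p^s}(a)$ is ruled out by a comparison of leading coefficients. All the remaining verifications — including the boundary cases $m\in\{-1,0,1\}$ in the discussion of $\mid$ and $T$ — are routine consequences of the definitions.
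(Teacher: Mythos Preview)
Your approach is exactly the paper's: the paper's proof is the one-liner ``It is an immediate consequence of Theorem~\ref{Katerina}'', and you are spelling that out, matching each symbol of $\Lcal_T^*$ to an item of that theorem via Lemma~\ref{grupo}. The verification for $0$, $1$, $+$, $\mid$, and $T$ is correct as you describe.

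Your handling of $R$, however, has a genuine gap --- and in fact you have put your finger on a small imprecision in the lemma as literally stated. You correctly reduce $\tilde R(\xi_{a,F}(m,\varepsilon),\xi_{a,F}(n,\delta))$ to the condition $\varepsilon\, x_m(a)^{p^s}=\delta\, x_n(a)$ for some $s$, and you correctly observe (via leading coefficients, using $p$ odd) that $x_m(a)^{p^s}=-x_n(a)$ is impossible. But that observation cuts the wrong way: it shows that $\tilde R$ on the $\Gfrak_a(F)$ side can hold \emph{only when} $\varepsilon=\delta$, whereas $R^{\Qfrak_p}((m,\varepsilon),(n,\delta))$ is by definition just $m\mid_p n$ and ignores $\varepsilon,\delta$ entirely. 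Concretely, with $(m,\varepsilon)=(1,1)$ and $(n,\delta)=(1,-1)$ we have $R^{\Qfrak_p}$ (since $1\mid_p 1$), but $\tilde R((a,1),(-a,1))$ would require $a^{p^s}=-a$, which fails for a general non-constant $a$ in odd characteristic. So $\xi_{a,F}$ is not, on the nose, an $\Lcal_T^*$-isomorphism for the symbol $R$. This does no harm to the paper: everywhere Lemma~\ref{NasimNb} is actually invoked (Lemmas~\ref{mantab} and~\ref{manta}, and the proof of Theorem~\ref{Teoremon2b}\,\ref{2bCT}) one first restricts, via Lemma~\ref{papa} or the formula $\eta$, to the slice $\varepsilon=1$, and on that slice your argument together with item~(4) of Theorem~\ref{Katerina} goes through cleanly.
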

\begin{proof} 
It is an immediate consequence of Theorem \ref{Katerina}.
\end{proof}

\begin{notation}
Consider the $\Lcal_A$-formula
$$
\delta(\alpha,v,w)\colon v^2+(\alpha^2-1)w^2=1
$$
and note that it is satisfied in $F[z]$ if and only if the pair $(v,w)$ is a solution of the Pell equation with parameter $\alpha$.
\end{notation}

\begin{lemma}\label{mantab} 
If $\alpha$ is interpreted as a non-constant element of $F[z]$ then the positive existential $\Lcal_A$-formula 
\begin{multline}
\eta(\alpha,v,w) \colon \delta(\alpha,v,w)\wedge(\exists x,y(\delta(\alpha,x,y)\wedge(v=x^2-(\alpha^2-1)y^2\wedge w=2xy)\vee \\
(v=(x^2-(\alpha^2-1)y^2)\alpha-(\alpha^2-1)2xy\wedge w=x^2-(\alpha^2-1)y^2+2\alpha xy))).\notag
\end{multline}
is satisfied in $F[z]$ if and only if $(u,v)$ is a solution of the Pell equation with parameter $\alpha$ and $u=x_n$ for some integer $n$.
\end{lemma}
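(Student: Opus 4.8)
The plan is to read $\eta$ as a purely group-theoretic condition on $\Sigma_\alpha(F)$ and then transport it through the isomorphism $\xi_{\alpha,F}$ of Lemma \ref{grupo}. Throughout, $\alpha$ is the fixed non-constant element of $F[z]$, so $\alpha^2-1\ne 0$ and Theorem \ref{Katerina} and Lemma \ref{grupo} apply; I will show that the $(v,w)$ satisfying $\eta$ are exactly the pairs $(x_n(\alpha),y_n(\alpha))$, $n\in\Z$, which is the asserted characterization (the solution must have first coordinate of the form $x_n(\alpha)$).

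First I would unwind the formula. The clause $\delta(\alpha,v,w)$ expresses that $(v,w)\in\Sigma_\alpha(F)$, and a one-line computation with the law $\oplus$ shows that, for $(x,y)\in\Sigma_\alpha(F)$, the first disjunct of $\eta$ is equivalent to $(v,w)=(x,y)\oplus(x,y)$, and the second disjunct is equivalent to $(v,w)=\bigl((x,y)\oplus(x,y)\bigr)\oplus(\alpha,1)$ (substitute $X=x^2-(\alpha^2-1)y^2$, $Y=2xy$ into the formula defining $\oplus$ and multiply by $(\alpha,1)$). Since $x_1(\alpha)=\alpha$ and $y_1(\alpha)=1$, we have $(\alpha,1)=\xi_{\alpha,F}(1,1)$. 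Hence $F[z]\models\eta(\alpha,v,w)$ if and only if $(v,w)\in\Sigma_\alpha(F)$ and there is $P\in\Sigma_\alpha(F)$ with $(v,w)=P\oplus P$ or $(v,w)=P\oplus P\oplus(\alpha,1)$.

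Next I would compute in $\Z\times\mu_2$ using that $\xi_{\alpha,F}$ is a group isomorphism. Writing $P=\xi_{\alpha,F}(m,\varepsilon)$, one gets $P\oplus P=\xi_{\alpha,F}(2\varepsilon m,1)$ and $P\oplus P\oplus(\alpha,1)=\xi_{\alpha,F}(2\varepsilon m+1,1)$. As $(m,\varepsilon)$ ranges over $\Z\times\mu_2$, the elements of the first kind are exactly the $\xi_{\alpha,F}(k,1)$ with $k$ even and those of the second kind are exactly the $\xi_{\alpha,F}(k,1)$ with $k$ odd; together they are precisely $\xi_{\alpha,F}(\Z\times\{1\})$. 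By Lemma \ref{grupo}, $\xi_{\alpha,F}(k,1)=(x_k(\alpha),y_k(\alpha))$. Therefore $\eta(\alpha,v,w)$ holds in $F[z]$ if and only if $(v,w)=(x_n(\alpha),y_n(\alpha))$ for some $n\in\Z$ (equivalently, using $x_{-n}=x_n$, for some $n\ge 0$), i.e. $(v,w)$ is a solution of the Pell equation with parameter $\alpha$ whose first coordinate is $x_n(\alpha)$ for some integer $n$. This genuinely excludes the ``sign $-$'' solutions $(-x_n(\alpha),y_n(\alpha))$ with $n\ge 1$, whose preimage $(n,-1)$ under $\xi_{\alpha,F}$ lies in the $\mu_2$-fiber over $-1\ne 1$, while everything produced by $\eta$ lies over $1$.

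I do not anticipate a real obstacle: the lemma is bookkeeping built on Theorem \ref{Katerina}. The only points needing care are (i) checking that the two disjuncts of $\eta$ reproduce exactly $P\oplus P$ and $P\oplus P\oplus(\alpha,1)$ and nothing more — a direct substitution into the defining formula for $\oplus$ — and (ii) verifying that these two families, as $P$ varies, cover the whole subgroup $\xi_{\alpha,F}(\Z\times\{1\})$ while missing the other $\mu_2$-fiber, so that $\eta$ really selects the ``$+x_n$'' branch rather than all Pell solutions.
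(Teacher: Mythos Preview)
Your proposal is correct and follows essentially the same approach as the paper. The paper's proof is the one-line ``trivial from Lemmas \ref{papa} and \ref{NasimNb}'': your argument simply unpacks this, recognizing that the two disjuncts of $\eta$ are the $\Sigma_\alpha(F)$-translation (via the isomorphism $\xi_{\alpha,F}$) of the formula $\zeta(v)\colon\exists w(v=w+w\vee v=w+w+1)$ from Lemma \ref{papa}, and then reproving inline that this formula picks out exactly the fiber $\Z\times\{1\}$.
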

\begin{proof} 
This is trivial from Lemmas \ref{papa} and then \ref{NasimNb}.
\end{proof}

\begin{lemma}\label{manta} 
For each $\varepsilon$ in $\{\pm1\}$, let us consider the following  positive existential $\Lcal_A$-formula $\Delta^{\varepsilon}(\alpha,x,x')$: 
$$
\begin{aligned}
&\eta(\varepsilon x,\varepsilon x')\wedge
\exists y,y',y_1,y_2(\delta(\alpha,\varepsilon x,y)\wedge\delta(\alpha,\varepsilon x',y')\wedge\\
&\hspace{40pt}\delta(\varepsilon x,\varepsilon x',y_1)\wedge\delta(\varepsilon x+1,\varepsilon x'+1,y_2))
\end{aligned}
$$
and write 
$$
\Delta(\alpha,x,x')\colon\bigvee_{\varepsilon\in\{\pm1\}}\Delta^{\varepsilon}(\alpha,x,x').
$$
Whenever $\alpha$ is assigned a non-constant element of $F[z]$, the formula $\Delta(\alpha,x,x')$ is satisfied in $F[z]$ if and only if $R_{F[z]}(x, x')$ holds (see Definition \ref{DefRA}).
\end{lemma}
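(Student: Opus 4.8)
The plan is to unwind $\Delta(\alpha,x,x')$ using Theorem \ref{Katerina}, so that the whole statement becomes a question about the \emph{indices} of solutions of the relevant Pell equations, with Lemma \ref{mantab} used to remove the sign ambiguity $\pm x_n$ of an $X$--coordinate. I would first dispose of the case $x\in F$ (equivalently $x'\in F$) directly: then $R_{F[z]}(x,x')$ amounts to $x'=x^{p^{s}}$ for some $s$, a constant is never $R_{F[z]}$--related to a non--constant, and the degenerate Pell equations with constant parameters still contain the constant solutions that make $\Delta(\alpha,x,x')$ true in exactly these cases. So assume from now on that $x$ (hence $x'$) is non--constant; the disjunction over $\varepsilon\in\{\pm1\}$ then serves both to absorb the $\pm$--ambiguity and to arrange that $\varepsilon x$, $\varepsilon x+1$ and $\alpha$ are all non--constant, so that every clause of Theorem \ref{Katerina} applies.

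For the direction $R_{F[z]}(x,x')\Rightarrow\Delta$, suppose without loss of generality $x'=x^{p^{s}}$ (the case $x=x'^{p^{s}}$ being handled symmetrically), and write $a=\varepsilon x$ for a suitable $\varepsilon$. The key point is Theorem \ref{Katerina}(4) with $m=1$: $a^{p^{s}}=x_{1}(a)^{p^{s}}=x_{p^{s}}(a)$, and since $p$ is odd this gives $\varepsilon x'=(\varepsilon x)^{p^{s}}=x_{p^{s}}(a)$. Hence $\varepsilon x'$ is the positive--branch $X$--coordinate of the $p^{s}$--th solution of the Pell equation with parameter $a$, so $\eta(\varepsilon x,\varepsilon x')$ holds by Lemma \ref{mantab}; the remaining witnesses $y,y',y_{1},y_{2}$ are then simply the matching $Y$--coordinates $y_{n}(\cdot)$ furnished by Theorem \ref{Katerina} for the Pell equations occurring in $\Delta^{\varepsilon}$ (parameters $\alpha$, $a$, $a+1$), and one checks that an index equal to $p^{s}$ (resp.\ $\pm p^{s}$) meets the divisibility and non--constancy constraints coming from Theorem \ref{Katerina}(3),(5).

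For the converse, suppose $F[z]\models\Delta^{\varepsilon}(\alpha,x,x')$. From $\eta(\varepsilon x,\varepsilon x')$ and Lemma \ref{mantab} (through Lemmas \ref{papa} and \ref{NasimNb}) we obtain $n\in\Z$ with $\varepsilon x'=x_{n}(a)$, $a=\varepsilon x$. Using the remaining $\delta$--clauses and the dictionaries ``$m\mid n$ in $\Z$ iff $y_{m}(\cdot)\mid y_{n}(\cdot)$'' and ``$y_{m}(\cdot)$ non--constant iff $m\notin\{-1,0,1\}$'' of Theorem \ref{Katerina}(3),(5), applied to the Pell equations with parameters $a$, $a+1$ and $\alpha$, one forces $n$ to have no prime divisor other than $p$, i.e.\ $n=\pm p^{s}$; then Theorem \ref{Katerina}(4) again yields $\varepsilon x'=x_{\pm p^{s}}(a)=a^{p^{s}}=\varepsilon x^{p^{s}}$, so $x'=x^{p^{s}}$ and $R_{F[z]}(x,x')$ holds. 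I expect this last step — extracting from the auxiliary clauses that the index is an \emph{exact} power of $p$, rather than merely odd or merely carrying some prescribed divisibility — to be the main obstacle, and to be exactly why the formula carries the extra Pell parameters $\varepsilon x+1$ and $\alpha$ alongside $\varepsilon x$ and why it must be routed through the branch selector $\eta$; once that is in place, the remainder is bookkeeping with Theorem \ref{Katerina}.
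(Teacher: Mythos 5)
The paper's own ``proof'' of this lemma is a single sentence deferring to Lemma 2.4 of \cite{PheidasZahidi1}, so a self-contained argument has to supply the actual content of that lemma; yours does not. The decisive gap is in your converse direction: you assert that the clauses $\delta(\varepsilon x,\varepsilon x',y_1)$ and $\delta(\varepsilon x+1,\varepsilon x'+1,y_2)$, read through items (3) and (5) of Theorem \ref{Katerina}, ``force $n$ to have no prime divisor other than $p$'', and then concede that you expect exactly this step to be the main obstacle. That step is not an incidental obstacle --- it is the entire content of the lemma, and the divisibility/non-constancy dictionary of Theorem \ref{Katerina} does not deliver it. What one actually knows is $\varepsilon x'=\pm x_k(\varepsilon x)$ and $\varepsilon x'+1=\pm x_l(\varepsilon x+1)$ for some integers $k,l$; extracting $k=\pm p^s$ from this requires the comparison of the two Pell equations with parameters $\varepsilon x$ and $\varepsilon x+1$ via degrees and derivatives (or Frobenius), writing $k=p^sk_0$ with $p\nmid k_0$ so that $x_k(t)=x_{k_0}(t)^{p^s}$ and using $(\varepsilon x)^{p^s}+1=(\varepsilon x+1)^{p^s}$ to reduce to the separable case and conclude $k_0=1$. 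None of this appears in your proposal, and none of it is ``bookkeeping with Theorem \ref{Katerina}''.

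There is a second, independent problem in your forward direction (and in your constant-case discussion). The clauses $\delta(\alpha,\varepsilon x,y)$ and $\delta(\alpha,\varepsilon x',y')$ assert that $\varepsilon x$ and $\varepsilon x'$ are $X$-coordinates of solutions of the Pell equation with parameter $\alpha$, hence by Theorem \ref{Katerina} of the form $\pm x_m(\alpha)$. For a generic pair with $R_{F[z]}(x,x')$ this simply fails: for $\alpha=z$ and $x=x'=z^2$ one has $R_{F[z]}(x,x')$ (take $s=0$), but $\delta(z,\pm z^2,y)$ forces $y^2=z^2+1$, which has no solution, so no witnesses $y,y'$ exist and $\Delta(z,z^2,z^2)$ is false. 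Your claim that the $Y$-coordinates are ``furnished by Theorem \ref{Katerina}'' is therefore unfounded, and the same objection kills the constant case (e.g.\ $x=x'=2$ in characteristic $p>3$). The lemma is really a statement about $x,x'$ ranging over $X$-coordinates of Pell solutions with parameter $\alpha$ --- which is the only way it is used in the proof of Item \ref{2bCT} of Theorem \ref{Teoremon2b} --- and a correct proof must either impose that restriction explicitly or explain why the general statement reduces to it; yours does neither.
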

\begin{proof} 
Note that $\Delta^{1}$ is analogous to the formula of Lemma 2.4 in \cite{PheidasZahidi1}.
\end{proof}

\section{The relation ``$y$ is a $p^{s}$-th power of $x$''}\label{pDivisibility}

In this section, we prove Theorems \ref{TeoremonA}, \ref{TeoremonB} and \ref{TeoremonC}. 

\begin{lemma}\label{viento}
Let $A$ be a commutative ring with unit. Let $x,y\in A$ be such that $y=x^{p^s}$ for some non-negative integer $s$. Write 
$$
x_n^2=(x-1+n)^{p^s+1},
$$ 
where $n=1,\dots,M$ for some integer $M\geq2$. We have 
$$
xy=x_1^2\qquad\textrm{ and }\qquad x+y=x_2^2-x_1^2-1.
$$
\end{lemma}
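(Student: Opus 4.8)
The plan is a direct computation, using only the hypothesis $y=x^{p^s}$ together with the Frobenius identity available in characteristic $p$. As a preliminary remark I would note that, since $p$ is the (odd prime) characteristic of $A$, the exponent $p^s+1$ is even, so $(x-1+n)^{p^s+1}=\bigl((x-1+n)^{(p^s+1)/2}\bigr)^2$ is a square in $A$; thus $x_n$ exists up to sign, and — what actually matters — the element $x_n^2=(x-1+n)^{p^s+1}$ is unambiguously determined regardless of the sign chosen, so the two asserted identities are meaningful.

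For the first identity I would set $n=1$, so that $x-1+n=x$ and hence
$$
x_1^2=x^{p^s+1}=x\cdot x^{p^s}=xy,
$$
using $y=x^{p^s}$.

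For the second identity I would set $n=2$, so that $x-1+n=x+1$ and $x_2^2=(x+1)^{p^s+1}$. The one nontrivial input here is the Frobenius: since $p\cdot 1_A=0$, the map $t\mapsto t^{p^s}$ is a ring endomorphism of $A$, whence $(x+1)^{p^s}=x^{p^s}+1=y+1$. Therefore
$$
x_2^2=(x+1)^{p^s}(x+1)=(y+1)(x+1)=xy+x+y+1,
$$
and combining with the first identity yields
$$
x_2^2-x_1^2-1=(xy+x+y+1)-xy-1=x+y,
$$
as claimed.

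There is essentially no obstacle: the lemma is a formal verification. The only point worth flagging is that the second identity genuinely uses $\char A=p$ (so that the $p^s$-power map is additive); this is implicit in the ambient setting of B\"uchi sequences for rings of characteristic $p>2$, and without it the identity for $x+y$ would fail.
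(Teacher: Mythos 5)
Your proof is correct and follows essentially the same route as the paper's: the identity $x_1^2=x^{p^s+1}=xy$ is immediate, and the identity for $x+y$ reduces, exactly as in the paper, to the Frobenius fact $(x+1)^{p^s}=x^{p^s}+1$ in characteristic $p$ (you expand $(x+1)^{p^s+1}$ while the paper adds and subtracts $x^{p^s+1}+1$ and factors, which is the same computation read backwards). Your remark that the statement tacitly assumes $\operatorname{char}A=p$ is a fair observation about the hypotheses, consistent with the ambient setting of the section.
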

\begin{proof}
Let us show that the second equation holds. We have:
$$
\begin{aligned}
x+y&=x+x^{p^s}\\
&=x^{p^s+1}+x+x^{p^s}+1-x^{p^s+1}-1\\
&=(x+1)(x^{p^s}+1)-x^{p^s+1}-1\\
&=(x+1)(x+1)^{p^s}-x^{p^s+1}-1\\
&=(x+1)^{p^s+1}-x^{p^s+1}-1\\
&=x_2^2-x_1^2-1
\end{aligned}
$$
which proves the lemma.
\end{proof}

\begin{proof}[Proof of Theorem \ref{TeoremonA}]
Suppose that B\"uchi's problem has a positive answer for a triple $(A,C,M)$ and write $M_0=M_0(A,C)$, so that we have: \emph{any $M$-term B\"uchi sequence $(x_n)$ of $(A,C)$, with $M\geq M_0$, is of the form
$$
x_n^2=(f+n)^{p^s+1}
$$
for some non-negative integer $s$ and $f\in A$.}

Consider the following formulas from the language of rings
$$
\varphi_0(x_1,\dots,x_{M_0},x,y)\colon \Delta^{(2)}(x_{1}^2,\dots,x_{M_0}^2)=(2)\,\wedge 
xy=x_1^2\wedge x+y=x_2^2-x_1^2-1,
$$
$$
\varphi_{1}(x,y)\colon \exists x_1\dots\exists x_{M_0}\varphi_0(x_1,\dots,x_{M_0},x,y)
$$
and 
$$
\varphi_{M_0}(x,y)\colon\varphi_1(x,y)\vee\varphi_1(y,x).
$$

For short, we might write 
\begin{center}
``there exists $s\in\Z$ such that $y=x^{p^s}$''
\end{center} 
instead of `` there exists $s\in\N$ such that either $y=x^{p^s}$ or $x=y^{p^s}$''.

Let us prove Item \ref{Aimp} of Theorem \ref{TeoremonA}. Let $x,y\in A$ be such that $R_A(x,y)$ holds. By definition of $R_A$ we have $y=x^{p^s}$ for some integer $s$. If $s\ge0$, taking $x_n\in A$ such that $x_n^2=(x-1+n)^{p^s+1}$ the formula $\varphi_{M_0}(x,y)$ is satisfied in $A$ by Lemma \ref{viento}. Analogously if $s\le0$ then by taking  $x_n^2=(y-1+n)^{p^{-s}+1}$ the formula $\varphi_{1}(y,x)$ is true in $A$ by Lemma \ref{viento}.

Let us prove Item \ref{Bz} of Theorem \ref{TeoremonA} (note that one implication comes directly from Item 1). Let $x,y\in A$ be such that $A$ satisfies $\varphi_{M_0}(x,y)$ and $xy$ or $x+y$ is not in $C$. On the one hand, if $xy$ is not in $C$ then $x_1^2$ is not in $C$. On the other hand, if $x+y$ is not in $C$ then $x_2^2-x_1^2-1$ is not in $C$, hence one of $x_1^2$ and $x_2^2$ is not in $C$. Therefore, the sequence $(x_1,\dots,x_{M_0})$ is a B\"uchi sequence with at least one term non-constant and by hypothesis, there exists $f\in A$ such that 
$$
x_n^2=(f+n)^{p^s+1}
$$ 
for some non-negative integer $s$. Therefore we have a system of equations in $x$ and $y$
$$
\begin{cases}
xy=(f+1)^{p^s+1}\\
x+y=(f+2)^{p^s+1}-(f+1)^{p^s+1}-1
\end{cases}
$$
whose unique solutions are 
$$
(x,y)=(f+1,(f+1)^{p^s})\qquad\textrm{and}\qquad (x,y)=((f+1)^{p^s},f+1)
$$ 
(the verification is easy and is left to the reader). Hence either $y=x^{p^s}$ or $x=y^{p^s}$, i.e. $R_A(x,y)$ holds.
\end{proof}

\begin{proof}[Proof of Theorem \ref{TeoremonB}]
Within this proof, ``transcendental'' will always mean ``transcendental over $C$'', and ``algebraic'' will always mean ``algebraic over $C$''. 

The positive existential formula from the language 
$\Lcal_T=\{0,1,+,\cdot,T\}$  
$$
\begin{aligned}
&\varphi_\Ccal^T(x,y)\colon((T(xy)\vee T(x+y))\wedge\varphi_{M(\Ccal)}(x,y))\vee\\
&\hspace{50pt}\exists u\exists v((T(uv)\vee T(u+v))\wedge\varphi_{M(\Ccal)}(ux,vy)\wedge \varphi_{M(\Ccal)}(u,v))
\end{aligned}
$$
uniformly defines $R_A$ in $\Ccal$ over $\Lcal_T$. 

Indeed, if $R_A(x,y)$ holds then there exists an integer $s$ such that $y=x^{p^s}$. If either $xy$ or $x+y$ is transcendental then $A$ satisfies 
$$
(T(xy)\vee T(x+y))\wedge\varphi_{M(\Ccal)}(x,y),
$$ 
by Theorem \ref{TeoremonA}. If none of $xy$ and $x+y$ is transcendental then choose $u$ transcendental and $v=u^{p^s}$ if $s\geq0$, or choose $v$ transcendental and $u=v^{p^{-s}}$ if $s<0$. For these choices of $u$ and $v$, $A$ satisfies 
$$
(T(uv)\vee T(u+v))\wedge\varphi_{M(\Ccal)}(ux,vy)\wedge \varphi_{M(\Ccal)}(u,v).
$$ 

Suppose now that $A$ satisfies $\varphi_\Ccal^T(x,y)$. If $A$ satisfies 
$$
(T(xy)\vee T(x+y))\wedge\varphi_{M(\Ccal)}(x,y)
$$ 
then $R_A(x,y)$ holds by Theorem \ref{TeoremonA}. If not then there in particular both of $xy$ and $x+y$ are algebraic, hence both of $x$ and $y$ are algebraic. Also there exist $u,v\in A$ such that 
\begin{itemize}
\item $uv$ or $u+v$ is transcendental (hence $u$ or $v$ is transcendental);
\item there exists $r\in\Z$ such that $v=u^{p^r}$ (by Theorem \ref{TeoremonA} and the previous item); and
\item $A$ satisfies $\varphi_{M(\Ccal)}(ux,vy)$.
\end{itemize}
Note that the first and second items imply that both $u$ and $v$ are transcendental.

Suppose that $x$ or $y$ is not $0$ (otherwise $R_A(x,y)$ holds trivially).\\
\emph{Case 1:} If $uxvy$ or $ux+vy$ is transcendental then, by the third item and Theorem \ref{TeoremonA}, there exists $s\in\Z$ such that $vy=(ux)^{p^s}$, hence none of $x$ and $y$ is $0$ and 
$$
u^{p^r}y=(ux)^{p^s},
$$ 
which implies 
$$
yu^{p^r-p^s}=x^{p^s}
$$
and therefore, $r=s$ (since $u$ is transcendental but none of $x$ and $y$ is transcendental nor $0$). Hence $R_A(x,y)$ holds. 

\emph{Case 2:} If both $uxvy$ and $ux+vy$ are algebraic then both $ux$ and $vy$ are algebraic, hence they are $0$ (since $u$ and $v$ are transcendental but $x$ and $y$ are algebraic), which contradicts the fact that $x$ or $y$ is non-zero. 

This finishes the proof of Item \ref{BT} of Theorem \ref{TeoremonB}. 

Let us prove Item \ref{Bz}, namely, let us prove that the positive existential formula $\varphi_\Ccal^z(x,y)$ from the language $\Lcal_z=\{0,1,+,\cdot,z\}$
$$
\varphi_\Ccal^z(x,y)\colon \varphi_{M(\Ccal)}(x,y)\vee\exists u(\varphi_{M(\Ccal)}(z,u)\wedge(\varphi_{M(\Ccal)}(zx,uy)\vee\varphi_{M(\Ccal)}(ux,zy)))
$$
uniformly defines $R_A$ in $\Ccal$ over $\Lcal_z$. 

Suppose first that $R_A(x,y)$ holds. There exists an integer $s$ such that $y=x^{p^s}$. If $s\geq0$, then by taking $u=z^{p^s}$ the formula $\varphi_{M(\Ccal)}(zx,uy)$ holds in $A$ by Theorem \ref{TeoremonA}. If $s\leq0$, then by taking $u=z^{p^{-s}}$ the formula $\varphi_{M(\Ccal)}(ux,zy)$ holds in $A$ by Theorem \ref{TeoremonA}.

Suppose now that $A$ satisfies $\varphi_\Ccal^z(x,y)$. If $xy$ or $x+y$ is transcendental then as $A$ satisfies $\varphi_{M(\Ccal)}(x,y)$ we are done by Theorem \ref{TeoremonA}. So suppose that both $xy$ and $x+y$ are algebraic (hence both $x$ and $y$ are algebraic). 

Suppose that $A$ satisfies 
$$
\exists u(\varphi_{M(\Ccal)}(z,u)\wedge\varphi_{M(\Ccal)}(zx,uy))
$$ 
(the other case is done similarly). Since $A$ satisfies $\varphi_{M(\Ccal)}(z,u)$ 
and $z$ is transcendental (hence $zu$ or $z+u$ is transcendental), by Theorem \ref{TeoremonA}, there exists an integer $r$ such that
\begin{equation}\label{mouse}
u=z^{p^r}
\end{equation}
and in particular $u$ is transcendental. Note that if both $x$ and $y$ are $0$ then we are done. So we may assume that one of the two is non-zero. 

\emph{Case 1:} If $uy+zx$ or $uyzx$ is transcendental, as $A$ satisfies $\varphi_{M(\Ccal)}(zx,uy)$, there exists an integer $k$ such that $uy=(zx)^{p^k}$ by Theorem \ref{TeoremonA}. In particular, none of $x$ and $y$ is $0$. By Equation \eqref{mouse} we have $z^{p^r}y=(zx)^{p^k}$, hence
$$
z^{p^r-p^k}y=x^{p^k}
$$
which implies $k=r$ (since $x$ and $y$ are algebraic and non-zero and $z$ is transcendental) and the result follows. 

\emph{Case 2:} If both $uy+zx$ and $uyzx$ are algebraic then both $uy$ and $zx$ are algebraic, which is impossible since $u$ and $z$ are transcendental, $x$ and $y$ are algebraic, and at least one of $x$ or $y$ is non-zero. 
\end{proof}

\begin{proof}[Proof of Theorem \ref{TeoremonC}]
Consider the positive existential $\Lcal_T$-formulas
$$
\psi_1(x,y)\colon \exists x_1\dots\exists x_{M(\Ccal)}\left((T(x_1^2)\vee T(x_2^2-x_1^2-1))\wedge\varphi_0(x_1,\dots,x_{M(\Ccal)},x,y)\right).
$$
and
$$
\psi_\Ccal^T(x,y)\colon \psi_1(x,y)\vee\psi_1(y,x)
$$
where $\varphi_0$ is defined at the beginning of this section (note that we have replaced $M_0$ by $M(\Ccal)$ in the definition of $\varphi_0$). 

Let $x,y\in A$ be such $R_A^C(x,y)$ holds. By definition of $R_A^C$ we have $y=x^{p^s}$ for some integer $s$. As in the proof of Theorem \ref{TeoremonA}, if $s\ge0$, taking $x_n\in A$ such that 
$$
x_n^2=(x-1+n)^{p^s+1}
$$ 
the formula 
$$
\varphi_0(x_1,\dots,x_{M(\Ccal)},x,y)
$$ 
is satisfied in $A$ by Lemma \ref{viento}. Analogously if $s\le0$ then by taking  
$$
x_n^2=(y-1+n)^{p^{-s}+1}
$$ 
the formula 
$$
\varphi_0(x_1,\dots,x_{M(\Ccal)},y,x)
$$ 
is true in $A$ by Lemma \ref{viento}. Let us show that with these elections of the $x_n$, the structure $A$ satisfies 
$$
T(x_1^2)\vee T(x_2^2-x_1^2-1).
$$ 
Suppose that it is not the case (i.e. $x_1^2$ and $x_2^2-x_1^2-1$ are in $C$) and that $s\geq0$. By Lemma \ref{viento}, $xy=x_1^2$ and 
$$
x+y=x_2^2-x_1^2-1
$$ 
are in $C$, hence 
$$
x^2+y^2=(x+y)^2-2xy
$$ 
is in $C$ and we deduce that 
$$
(x-y)^2=x^2+y^2-2xy
$$ 
is in $C$. By hypothesis on $C$, it follows that $x-y\in C$, hence also 
$$
2x=(x+y)+(x-y)
$$ 
is in $C$ and 
$$
2y=(x+y)-(x-y)
$$ 
is in $C$. Therefore, again by hypothesis on $C$, $x$ and $y$ are in $C$, which gives a contradiction. Hence $A$ satisfies $\psi_1(x,y)$. Similarly, if $s\leq0$ then $A$ satisfies $\psi_1(y,x)$. Hence $A$ satisfies $\psi_\Ccal^T(x,y)$.

Suppose that $A$ satisfies $\psi_\Ccal^T(x,y)$. Since the sequence $(x_1,\dots,x_{M(\Ccal)})$ is a B\"uchi sequence with either $x_1^2$ or $x_2^2$ not in $C$, hence either $x_1$ or $x_2$ is not in $C$, and since $\Ccal$ is a B\"uchi class, there exists $f\in A$ such that 
$$
x_n^2=(f+n)^{p^s+1}
$$ 
for some non-negative integer $s$. We conclude as in the proof of Theorem \ref{TeoremonA}. 
\end{proof}

\section{Uniform encoding of the natural numbers}\label{unat}

In this section we will prove Theorem \ref{Teoremon2}. 

Let us call an algorithm \emph{identity algorithm} if it returns the input data. 

\begin{proof}[Proof of Item \ref{2Z} of Theorem \ref{Teoremon2}]
We want to prove that the pair $(\fpe_{\Lcal_A},\N)$ is uniformly encodable in the pair $(\fpe_{\Lcal^*},\Zcal)$. Following the strategy described in Section \ref{UnifEncod}, we will follow the ``One step encoding process'' for natural numbers, for the language $\Lcal=\Lcal^*$, the class $\Ucal=\Ncal$ and where the set $X$ is $\{\cdot\}$. 

The following algorithm $\Acal$ uniformly encodes $(\fpe_{\Lcal_A},\N)$ in $(\fpe_{\Lcal^*\cup X},\Zcal^X)$ (hence Item \ref{onestepa} of the process is fullfiled): given a sentence $F$ in $\fpe_{\Lcal_A}$, replace each occurrence in $F$ of $\exists x$ by $\exists x\geq0$ (or more formally ``$\exists x(\pos(x)\wedge$'' and taking care of where should close the parenthesis). It is clear that $F$ is true in $(\N;0,1,+,\cdot)$ if and only if $\Acal(F)$ is true in $(\Z;0,1,+,\cdot,\geq0,\mid_p)$.

By Corollary \ref{upedinZ}, multiplication is $\Lcal^*$-uped in the class $\Zcal$, hence also Item \ref{onestepb} is fulfilled. 
\end{proof}

\begin{proof}[Proof of Item \ref{2N} of Theorem \ref{Teoremon2}]
We want to prove that the pair $(\fpe_{\Lcal_A},\N)$ is uniformly encodable in the pair $(\fpe_{\Lcal^{*,+}},\Ncal)$. Following the strategy described in Section \ref{UnifEncod}, we will follow the ``One step encoding process'' for natural numbers, for the language $\Lcal=\Lcal^{*,+}$, the class $\Ucal=\Ncal$ and where the set $X$ is $\{\cdot\}$. Item \ref{onestepa} of the process comes trivially in this case: the required algorithm to uniformly encode $(\fpe_{\Lcal_A},\N)$ in 
$$
(\fpe_{\Lcal^{*,+}\cup X},\Ncal^X)
$$ 
is the identity algorithm, because a formula in $\fpe_{\Lcal_A}$ is true in $(\N;0,1,+,\cdot)$ if and only if it is true in $\Nfrak_p^X=(\N;0,1,+,\cdot,\mid_p)$. Item \ref{onestepb} asks for multiplication to be  $\Lcal^{*,+}$-uped in the class $\Ncal$, but this is Item \ref{1N} of Theorem \ref{Teoremon1}. 
\end{proof}

\begin{proof}[Proof of Item \ref{2O} of Theorem \ref{Teoremon2}]
We want to prove that the pair $(\fpe_{\Lcal_A},\N)$ is uniformly encodable in the pair 
$$
(\fpe_{\Lcal_{z,\ord,\ne}},\Omega)
$$ 
where $\Omega$ has been defined in Notation \ref{clason}. We will follow the ``two steps encoding process'' for natural numbers, where
\begin{itemize}
\item $\Lcal=\Lcal^*$ and $\Ucal=\Ncal$
\item $\Acal_0$ is the algorithm that uniformly encodes $(\fpe_{\Lcal_A},\N)$ in $(\fpe_{\Lcal^{*,+}},\Ncal)$
\item $\Lcal'=\Lcal_{z,\ord,\ne}$ and $\Vcal=\Omega$
\item $\Ucal_{\rm ind}=\{\Nfrak_p\colon p\in\Pcal\}$, where $\Pcal$ is the set of primes for which there exists at least one structure in $\Omega$ of characteristic $p$.
\item $\Omega$ is partitionned into subclasses $\Omega_{\Nfrak_p}$ where $\Omega_{\Nfrak_p}$ is the class of all structures in $\Omega$ of characteristic $p$.
\item $Y$ is the set of symbols $\{R,S\}$
\item $R$ is interpreted in each $\Lcal_{z,\ord,\ne}$-structure $\Ufrak$ of $\Omega$ by the relation $R_{\Ufrak}$ defined by ``$R_{\Ufrak}(x,y)$ if and only if there exists an integer $s$ such that $y=x^{p^s}$'', where $p$ is the characteristic of $\Ufrak$
\item $S(x,y)$ is interpreted in each $\Ufrak$ in $\Omega$ as ``$\ord_\pfrak(x)=\ord_\pfrak(y)$'' (recalling that to each structure in $\Omega$ is associated exactly one local parameter $z$ at a prime divisor $\pfrak$ - see Notation \ref{clason})
\end{itemize}

We will apply Proposition \ref{Lemon} for each prime number in $\Pcal$. Fix such a prime $p$. Following the notation of Proposition \ref{Lemon}, consider:
\begin{itemize}
\item $\Lcal_0=\{1,z,\cdot,R\}$
\item $\Lcal_1=\Lcal_0\cup\{S\}$
\item $\Ucal_0^p=\Omega_{\Nfrak_p}^{*,{\rm reg}}$ is the class of $\Lcal_0$-structures with base set $\{f\in U\colon f\ne0\textrm{ and } \ord_\pfrak(f)\geq0\}$ as $U$ ranges among the base sets of structures of characteristic $p$ in $\Omega$, and where $R$ is interpreted as $R_{\Ufrak}$ restricted to $U\smallsetminus\{0\}$. Note that there can be more than one structure in $\Ucal_0$ with a given base set (depending on the choice of the local parameter). 
\item $\Ucal_1^p$ is the class of $\Lcal_1$-structures obtained from $\Ucal_0^p$ when interpreting $S(x,y)$ by ``$\ord_\pfrak(x)=\ord_\pfrak(y)$''. 
\item $\Wfrak^p$ is the $(\Nfrak_p,j)$-induced $\Lcal_0$-structure, where $j\colon\Lcal^{*,+}\rightarrow\Lcal_0$ is the bijection
$$
\begin{array}{|c||c|c|c|c|c|}\hline
 \alpha&0&1&+&R\\\hline
j(\alpha)&1&z&\cdot&R\\\hline
\end{array}
$$
\item For each $\Ufrak$ in $\Ucal_0^p$, let $f_\Ufrak\colon \Ufrak\rightarrow\Wfrak^p$ be the map that sends $x\in\Ufrak$ to its order at $\pfrak$. 
\end{itemize}

Let us prove that the hypothesis of Proposition \ref{Lemon} are satisfied. First, $f_\Ufrak$ is trivially a morphism for each $\Ufrak$ in $\Ucal_0^p$, and it is onto because in each structure in $\Ucal_0^p$ we have only regular functions. 

Let us prove that $f_\Ufrak$ is relation-onto. Let $x_1,x_2\in\Ufrak$ be such that 
$$
\ord_\pfrak(x_1)\mid_p \ord_\pfrak(x_2).
$$ 
Taking 
$$
u_1=z^{\ord_\pfrak(x_1)}\qquad\textrm{and}\qquad u_2=z^{\ord_\pfrak(x_2)},
$$ 
we have $u_1\sim_{f_\Ufrak}x_1$ and $u_2\sim_{f_\Ufrak}x_2$ and $R^\Ufrak(u_1,u_2)$ holds.

The collection of relations $\sim_{f_\Ufrak}$ is trivially $\Lcal_1$-uped in $\Ucal_1^p$. Therefore, we can apply Proposition \ref{Lemon} to obtain the algorithm $\Acal_p=\Acal$ that actually does not depend on $p$. 

At this point we have the algorithms $\Acal_0$, $\Acal_{\Lcal^{*,+}}^{\Lcal^0}$ and $\Acal$. 

Let $\Acal'$ be the algorithm that transforms an $\Lcal_1$-sentence into a sentence over the language $\Lcal_{z,\ord,\ne}\cup Y$ by replacing each occurrence of the form $Qx$, where $Q$ is a quantifier, by $Qx\ne0$. It is clear that for each $\Lcal_1$-sentence $F$ and for each $\Ufrak$ in the class $\Ucal_1^p$, the corresponding structure in $\Omega_{\Nfrak_p}^{Y}$ satisfies $\Acal'(F)$ if and only if $\Ufrak$ satisfies $F$. 

In order to conclude, it is now enough to give uniform positive existential $\Lcal_{z,\ord,\ne}$-definitions of the elements of $Y=\{R,S\}$ in the class $\Omega$. 

For the symbol $R$, this is Item \ref{Bz} of Theorem \ref{TeoremonB}. The positive existential formula 
$$
\exists u,v(x=uy\wedge y=vx\wedge\ord(u)\wedge\ord(v))
$$
uniformly $\Lcal_{z,\ord,\ne}$-defines $S$ in the class $\Omega$. 

Schematically, following the ``two steps encoding process'', we performed:
$$
(\fpe_{\Lcal_A},\N)\xrightarrow{\Acal_0}(\fpe_{\Lcal^{*,+}},\Ncal)\supseteq (\fpe_{\Lcal^{*,+}},\{\Nfrak_p\colon p\in\Pcal\})
\xrightarrow{\Acal_{\Lcal^{*,+}}^{\Lcal_0}}(\fpe_{\Lcal_0},\left\{\Wfrak^p\colon p\in\Pcal\right\})
$$
$$
\xrightarrow{\,\Acal\,}\left(\fpe_{\Lcal_1},\bigcup_{p\in\Pcal}\Omega_{\Nfrak_p}^{*,\rm reg}\right)\xrightarrow{\,\Acal'\,}
\left(\fpe_{\Lcal_{z,\ord,\ne}\cup Y},\bigcup_{p\in\Pcal}\Omega_{\Nfrak_p}^{Y}\right)
\xrightarrow{\Acal_{Y}^{R,S}}(\fpe_{\Lcal_{z,\ord,\ne}},\Omega)
$$
\end{proof}

\begin{proof}[Proof of Item \ref{2Ob} of Theorem \ref{Teoremon2}]
This comes immediately from Proposition \ref{Atransform} and Item \ref{2Ob} of Theorem \ref{Teoremon2}.
\end{proof}

\section{Uniform encoding of $\Z$ in the language $\Lcal_T$}\label{uint}

In this section we will prove Theorem \ref{Teoremon2b}.

\begin{proof}[Proof of Item \ref{2bZ} of Theorem \ref{Teoremon2b}]
We want to prove that $(\fpe_{\Lcal_A},\Z)$ is uniformly encodable in $(\fpe_{\Lcal^*},\Zcal)$. We proceed as in the proof of Item \ref{2N} of Theorem \ref{Teoremon2}  (following the ``one step encoding process'') with $\Lcal=\Lcal^*$, $\Ucal=\Zcal$ and $X=\{\cdot\}$. Multiplication is $\Lcal^*$-uped in the class $\Zcal$ by Item \ref{1D} of Theorem \ref{Teoremon1}. 
\end{proof}

\begin{proof}[Proof of Item \ref{2bD} of Theorem \ref{Teoremon2b}]
We prove that $(\fpe_{\Lcal_A},\Z)$ is uniformly encodable in $(\fpe_{\Lcal_T^*},\Dcal)$.
Follow the ``one step encoding process'') with $\Lcal=\Lcal_T^*$, $\Ucal=\Dcal$ and $X=\{\cdot\}$. Multiplication is  $\Lcal_T^*$-uped in the class $\Dcal$ by Item \ref{1Z} of Theorem \ref{Teoremon1}. 
\end{proof}

\begin{proof}[Proof of Item \ref{2bCT} of Theorem \ref{Teoremon2b}]
We prove that $(\fpe_{\Lcal_A},\Z)$ is uniformly encodable in $(\fpe_{\Lcal_T},\Ccal)$, where $\Ccal$ is the class of all polynomial rings over a field of odd positive characteristic. 

We will first follow the ``two steps encoding process'' to uniformly encode $(\fpe_{\Lcal_A},\Z)$ in $(\fpe_{\Lcal_T^*},\Qcal)$, with
\begin{itemize}
\item $\Lcal=\Lcal_T^*$ and $\Ucal=\Dcal$
\item $\Acal_0$ is the algorithm that uniformly encodes $(\fpe_{\Lcal_A},\Z)$ in $(\fpe_{\Lcal_T^*},\Dcal)$
\item $\Lcal'=\Lcal=\Lcal_T^*$ and $\Vcal=\Qcal$
\item $\Ucal_{\rm ind}=\Ucal$
\item $\Qcal$ is partitionned into one-element subsets $\{\Qfrak_p\}$
\item $Y$ has only one symbol $\beta$
\item $\beta$ is interpreted in each $\Lcal_T^*$-structure $\Qfrak_p$ of $\Qcal$ as ``$\beta((u,v))$ if and only if $v=1$''.
\end{itemize}

Note that $\beta$ is $\Lcal_T^*$-uped in $\Qcal$ by Lemma \ref{papa}. In this context $\Bcal'$ is the algorithm that transforms each occurrence of $Qx$ in an $\Lcal_T^*$-sentence by $Qx(\beta x)\wedge$ (with the usual abuse of notation). Schematically, we have:
$$
(\fpe_{\Lcal_A},\Z)\xrightarrow{\,\,\,\Acal_0\,\,\,}(\fpe_{\Lcal},\Dcal)= (\fpe_{\Lcal},\Dcal_{\rm ind})
$$
$$
\xrightarrow{\,\,\,\Bcal'\,\,\,}
\left(\fpe_{\Lcal\cup Y},\bigcup_{p}\{\Qfrak_p\}^{Y}\right)
\xrightarrow{\Acal_Y^{\beta}}(\Acal_Y^{\beta}(\fpe_{\Lcal\cup Y}),\Qcal)
\subseteq(\fpe_{\Lcal},\Qcal).
$$

We will now use Proposition \ref{Quentin} with 
\begin{itemize}
\item $\Gcal=\fpe_{\Lcal_A}$ and $\Mfrak=\Z$
\item $\Gcal'=\fpe_{\Lcal_T^*}$ and $\Ucal=\Qcal$
\item $\Gcal''=\fpe_{\Lcal_T}$ and $\Vcal=\Ccal$
\item $\Ucal_{\rm ind}=\Ucal$
\item $\Ccal$ is partitioned into subclasses $\Ccal_{\Qfrak_p}$, where $\Ccal_{\Qfrak_p}$ is the class of all polynomial rings of characteristic $p$. 
\end{itemize}
In order to conclude we need to find an algorithm $\Bcal$ such that for each $\Qfrak_p\in\Qcal$, the pair $(\fpe_{\Lcal_T^*},\Qfrak_p)$ is uniformly encodable in $(\fpe_{\Lcal_T},\Ccal_{\Qfrak_p})$. Fix a prime $p$ and let 
$$
F=Q_1u_1\dots Q_nu_n G(u_1,\dots,u_n)
$$
be an $\Lcal_T^*$-sentence in normal prenex form (in our case we need only to consider existential quantifiers, but the whole proof actually goes through when universal quantifiers are allowed). 

Write $G^1,\dots,G^m$ the atomic formulas that appear in $F$ and let us describe the algorithm (following the syntax as in Cori and Lascar \cite{CoriLascar}):
\begin{enumerate}
\item \textbf{Term by term substitution of constants and function symbols.} In each term of $G$, formally replace (in the following order)
	\begin{enumerate}
	\item each occurrence of $0$ by $(1,0)$;
	\item each occurrence of $1$ by $(\alpha,1)$, for some new (fixed) variable $\alpha$; 
	\item each $u_i$ by $(v_i,w_i)$, for some new (fixed) variables $v_i$ and $w_i$;
	\item each string of the form $(x,y)+(x',y')$ by
	$$
	(xx'-(\alpha^2-1)yy',xy'+x'y)
	$$ 
	until the whole term becomes a single pair. 
	\end{enumerate}
Call $G_0$ the word resulting from $G$, and $G_0^1,\dots,G_0^m$ the words resulting from the corresponding atomic formulas $G^1,\dots,G^m$.
\item \textbf{Substitution of the relation symbols: first component}:
	\begin{enumerate}
	\item In $G_0$, delete any of the $G_i^1$ (and its corresponding connective if any) where appears $\mid$ or $T$.
	\item Replace each pair by its first component.
	\item Replace each $R(x,x')$ by the formula $\Delta(\alpha,x,x')$ from Lemma \ref{manta}
	and write 
	$$
	G_1(\alpha,u_1,\dots,u_n,v_1,\dots,v_n)
	$$ 
	the resulting $\Lcal_T$-formula.
	\end{enumerate}
\item \textbf{Substitution of the relation symbols: second component}:
	\begin{enumerate}
	\item In $G_0$, delete any of the $G_i^1$ (and its corresponding connective if any) 	where appears $R$.
	\item Replace each pair by its second component.
	\item Replace each $x\mid y$ by $\exists t (y=tx)$ (and don't do anything to $T$)
	and write 
	$$
	G_2(\alpha,u_1,\dots,u_n,v_1,\dots,v_n)
	$$
	the resulting $\Lcal_T$-formula.
	\end{enumerate}
\item Define $\Bcal(F)$ as 
$$
Q_1u_1Q_1v_1\dots Q_nu_n Q_nv_n\exists\alpha \left(T(\alpha)\wedge G_1\wedge G_2\wedge \bigwedge_{i=1}^n\delta(\alpha,u_i,v_i)\right)
$$ 
\end{enumerate}

Observe that one of $G_1$ and $G_2$ must be non-empty: if no relations except equality appear in $G$, then we did not delete anything, and if a relation that is not equality appears in $G$ then it can be deleted only in one of $G_1$ or $G_2$. 

The algorithm $\Bcal$ works thanks to Lemma \ref{NasimNb}.

\end{proof}

\end{document}